\documentclass[review,hidelinks,onefignum,onetabnum]{siamart251216}
\newtheorem{example}{Example}[section]

\usepackage{lipsum}
\usepackage{amsfonts}
\usepackage{graphicx}
\usepackage{epstopdf}
\usepackage{algorithmic}
\ifpdf
  \DeclareGraphicsExtensions{.eps,.pdf,.png,.jpg}
\else
  \DeclareGraphicsExtensions{.eps}
\fi

\newsiamremark{remark}{Remark}
\newsiamremark{hypothesis}{Hypothesis}
\crefname{hypothesis}{Hypothesis}{Hypotheses}
\newsiamthm{claim}{Claim}

\headers{RPLSS: Randomized projected linear systems solver}{M.L. Xiao,  T. Li, and D. Needell}
\title{RPLSS: A randomized projected linear systems solver\thanks{Corresponding author: T. Li (tli@hainanu.edu.cn).
\funding{This work was funded by the National Natural Science Foundation of China [grant number 12401493].}}}
\author{Meng-Long Xiao\thanks{School of Mathematics and Statistics, Hainan University, Haikou 570228, P.R. China
(\email{tli@hainanu.edu.cn}, \email{xml@hainanu.edu.cn}).}
  \and Tao Li\footnotemark[2]
 \and Deanna Needell\thanks{Department of Mathematics, University of California, Los Angeles 90095,  USA  (\email{deanna@math.ucla.edu}).}}

\usepackage{amsopn}

\ifpdf
\hypersetup{
  pdftitle={RPLSS: A randomized projected linear systems solver},
  pdfauthor={Meng-Long Xiao,  Tao Li, Deanna Needell}
}
\fi

\begin{document}

\maketitle

\begin{abstract} The projected linear system solver (PLSS), by incrementally appending columns to a random or deterministic sketching matrix, provides an attractive finite termination property for consistent linear systems.
Nevertheless, a critical computational bottleneck of PLSS is accessing the whole coefficient matrix per iteration, making it prohibitive for extremely large-scale problems or applications with missing data. To alleviate this limitation, we propose a unified randomized PLSS (RPLSS) framework, built upon the tailored randomized
row or column selection strategies that require only partial matrix information per iteration, for
solving a general linear system, whether it is under- or overdetermined, and whether it is consistent
or not. Within this framework, we develop a randomized Gaussian Kaczmarz method and its extended variant as row-action solvers, and randomized coordinate descent variants as column-action solvers. Theoretically, we prove that our methods inherit the finite termination property of PLSS, while achieving an exponential convergence rate, overcoming the sluggish convergence inherent in conventional randomized Kaczmarz and coordinate descent methods. Numerical experiments demonstrate the superiority of our method against state-of-the-art randomized methods, particularly in scenarios with large missing data.



\end{abstract}

\begin{keywords}
linear system; randomized PLSS; expected linear convergence; Kaczmarz method; coordinate descent
\end{keywords}

\begin{MSCcodes}
65F10; 65F20; 94A08
\end{MSCcodes}
\section{Introduction}\label{section-1}
Consider a large, possibly consistent or not, general linear system
\begin{equation}\label{1-1}
\mathbf{A}\mathbf{x}=\mathbf{b},
\end{equation}
where $\mathbf{A}\in \mathbb{R}^{m\times n} $ and $\mathbf{b}\in \mathbb{R}^{m} $ are given, and $\mathbf{x}\in \mathbb{R}^{n} $ is unknown. Such a linear system underpins widespread applications across image reconstruction \cite{JA,GT}, signal processing \cite{Byr,Bor}, and partial differential equations \cite{Canu}, etc. While the regular iterative solvers, such as Krylov subspace methods and Hermitian and skew-Hermitian methods \cite{
HSU,HSU1,Bai0,CC,DC}, for \eqref{1-1} have been extensively matured, randomized solvers \cite{Strohmer,Needell,Elble,Needell2,KDu,Ma,Bai2,Zou,Needell3,Ke,Jiang} are growing in popularity due to a revolutionary advantage. Specifically, the traditional iterative solvers, demanding full-matrix access per iteration, are costly to generate highly accurate solutions, but they are not needed or even desired since Eq.\eqref{1-1} may constitute a random subset of a larger dataset, or it may be contaminated by noise. Conversely, randomized solvers address only a sequence of small projected systems, rather than \eqref{1-1}, while maintaining competitive convergence to yield the desired solutions. This makes them powerful for tackling large-scale, memory-constrained, or data-streaming applications. In this paper, we propose a family of randomized projection methods that solve a sequence of smaller systems and offer extraordinary advantages over state-of-the-art randomized solvers in terms of accuracy and convergence.

\subsection{Notations}\label{subsection-1-0}
Let $\mathbb{R}^{m\times n}$ be the collection of all real matrices with size $m\times n$. Denote by $\mathbf{A}^{(i)}$ and $\mathbf{A}_{(j)}$, the $i$-th row and
$j$-th column of a matrix $\mathbf{A}\in \mathbb{R}^{m\times n}$, respectively. For index sets $\tau_k$ and $\tilde{{\tau}}_k$, $\mathbf{A}_{\tau_k}$ and $\mathbf{A}_{:,\tilde{{\tau}}_k}$ represent the  row and column submatrices of $\mathbf{A}$, respectively. The symbol $\mathbf{e}_{j}$ represents the $j$-th column of the identity matrix $\mathbf{I}$, with dimension depending on the
context.
The Frobenius norm  of $\mathbf{A}$ is defined as
$
\|\mathbf{A}\|_F:= (\sum\limits_{j = 1}^n {\sum\limits_{i = 1}^m {{{\left| {{\mathbf{A}_{ij}}} \right|}^2}} } )^{\frac{1}{2}}.$
 Given a symmetric positive definite $\mathbf{B}\in \mathbb{R}^{n\times n}$, the scaled 2-norm of an $n$-vector $\mathbf{p}$ is defined as
$\|\mathbf{p}\|_\mathbf{B}=(\mathbf{p}^T\mathbf{B}\mathbf{p})^{\frac{1}{2}}$. When $\mathbf{B}=\mathbf{I}$, the norm reduces to the standard 2-norm. Denote by $\mathbf{A}^T$, $\mathbf{A}^{-1}$ and $\mathbf{A}^{\dagger}$, the transpose, inverse, and pseudo-inverse of $\mathbf{A}$, respectively, by $\mathcal{R}(\mathbf{A})$, the column space of $\mathbf{A}$, and by $\mathcal{R}(A)^{\bot}$, its orthogonal complement subspace. Let $\mathbf{b}_{\mathcal{R}(\mathbf{A})}$ and $\mathbf{b}_{\mathcal{R}(\mathbf{A})^{\bot}}$ be the orthogonal projection of $b$ onto $\mathcal{R}(\mathbf{A})$ and ${\mathcal{R}(\mathbf{A})^{\bot}}$, respectively. For the matrix $\mathbf{A}^T\mathbf{A}$, we denote its nonzero minimal eigenvalue by $\lambda_{min}(\mathbf{A}^T\mathbf{A})$, and the maximal eigenvalue by $\lambda_{max}(\mathbf{A}^T\mathbf{A})$. Denote by $\mathbb{E} _{k}$, the expected value conditional of the first $k$ iterations, i.e.,
$\mathbb{E} _{k} [\cdot ]=\mathbb{E}[\cdot \mid \tau_{0}, \tau_{1},\dots,\tau_{k-1}  ],$ where $\tau_{j} (j=0,1,\dots,k-1 )$ is the $j$-th index set chosen at the $j$-th iterate, and from the law of iterated expectations, we obtain $\mathbb{E}[\mathbb{E} _{k} [\cdot ]]=\mathbb{E}[\cdot]$. 

\subsection{Sketch-and-Project methods}\label{subsection-1-3}
Over the past decade, the sketch-and-project (SP) methods \cite{Woodruff,Ghashami,GoR} for solving the large-scale consistent linear systems, have witnessed significant attention due to their low memory and complexity. It compresses the original system into a simplified subproblem via sketching matrices per iteration, and then projects the current iterate onto the solution space of the sketched subproblem. Let $\mathbf{S}_k\in\mathbb{R}^{m\times r}$ be an arbitrary sketching matrix with $r \ll m$, subject to the specified dimensions and rank. Following the Johnson-Lindenstrauss lemma \cite{Johnson0}, a solution of the reduced linear system
\begin{equation}\label{1-11}
\mathbf{S}_k^T\mathbf{A}\mathbf{x}=\mathbf{S}_k^T\mathbf{b}
\end{equation}
is highly approximate $\mathbf{x}$ in \eqref{1-1}. Viewed from the angle of projection, the next iterate $\mathbf{x}_{k}$ of Eq.\eqref{1-11} yields \cite{RiP}
\begin{equation}\label{1-12}
\mathbf{x}_{k} = \arg\min_\mathbf{x}\frac{1}{2} \|\mathbf{x} - \mathbf{x}_{k-1}\|_\mathbf{B}, \quad \text{s.t.} \quad \mathbf{S}_k^T\mathbf{A}\mathbf{x}=\mathbf{S}_k^T\mathbf{b}.
\end{equation}
As was shown in \cite{GoR,Bjj}, if $\mathbf{S}_k$ is in the range of $\mathbf{A}$, then
\begin{equation}\label{1-13}
\mathbf{x}_{k}=\mathbf{x}_{k-1}+\mathbf{p}_{k},
\end{equation}
where the update \begin{equation}\label{a1-5}
\mathbf{p}_k = \mathbf{W} \mathbf{A}^T \mathbf{S}_k (\mathbf{S}_k^T\mathbf{A} \mathbf{W} \mathbf{A}^T\mathbf{S}_k)^{-1} \mathbf{S}_k^T \mathbf{r}_{k-1}  
\end{equation} with $\mathbf{W}=(\mathbf{B}^T\mathbf{B})^{-1}$; otherwise, the pseudo-inverse replaces the corresponding inverse. Note that there exist infinitely many iterative schemes like \eqref{1-13} since the matrices $\mathbf{S}_k$ and $\mathbf{B}$ in the update are arbitrary. In particular, when $\mathbf{B}=\mathbf{I}$ and $\mathbf{S}_k=\mathbf{e}_{i_k}$ (one random column of the identity), the iterative scheme \eqref{1-13} becomes  
\begin{equation}\label{1-14}
\mathbf{x}_{k} =\mathbf{x}_{k-1} + \frac{\mathbf{b}_{i_k}-\mathbf{A}^{(i_{k})}\mathbf{x}_{k-1}}{\left | \left | \mathbf{A}^{(i_{k})} \right |  \right |_2^{2} }(\mathbf{A}^{(i_{k})})^{T}.
\end{equation}
Obviously, they are the popular randomized
Kaczmarz methods \cite{Strohmer,Bai,Alderman}, the row-action solvers. When $\mathbf{B}=\mathbf{A}$ and $\mathbf{S}_k=\mathbf{A}\mathbf{e}_{j_{k}}$, i.e., $\mathbf{A}_{(j_{k})}$ (one random column of $\mathbf{A}$), then \eqref{1-13} is of the
form
\begin{equation}\label{1-15}
\mathbf{x}_{k} =\mathbf{x}_{k-1} + \frac{\mathbf{A}_{(j_{k})}^{T}(\mathbf{b}-\mathbf{A}\mathbf{x}_{k-1})}{\left | \left | \mathbf{A}_{(j_{k})} \right |  \right |_2^{2}}\mathbf{e}_{j_{k}},
\end{equation}
which are the well-known randomized coordinate descent or Gauss-Seidel methods \cite{LeD,Bai4,Ma}, the column-action solvers. Note that both methods employ simple sketching matrices to minimize per-iteration computational overhead, but their convergence depends heavily on the spectral properties of the matrix $\mathbf{A}$. When $\lambda_{min}(\mathbf{A}^T\mathbf{A})$ is sufficiently small, the convergence factor asymptotically nears one, resulting in prohibitively large iteration complexity and severely degraded convergence performance. 

To address the stagnant convergence inherent in traditional randomized methods, Brust and Saunders \cite{Bjj} proposed an elegant framework, called the projected linear systems solver (PLSS), for solving the consistent linear system \eqref{1-1}. Unlike the standard SP methods that independently regenerate the sketching matrix per iteration, the PLSS introduces an interesting process, that is, progressive sketching, which incrementally appends one column to the existing sketching matrix $\mathbf{S}_k = [\mathbf{s}_1, \mathbf{s}_2, \dots, \mathbf{s}_k]$. More precisely, by choosing the collection of all previous residuals as the sketching matrix, the PLSS decouples the cumbersome generalized inverse updates into a simple short recurrence formula, which only needs to store and update a few vectors, effectively controlling the memory overhead. Theoretically, the method converges to an exact solution within at most $\text{rank}(\mathbf{A})$ iterations in exact arithmetic.

It is worth pointing out that the core recurrence formulas and subspace orthogonality proofs of PLSS explicitly rely on the consistency assumption that $\mathbf{b}\in \mathcal{R}(\mathbf{A})$. When dealing with inconsistent systems arising from observational noise or modeling deficiencies, the sketching vectors, generated from residuals, often fail to accurately capture the true error projection of the solution space. This leads to numerical ill-conditioning or complete stagnation of the denominators in the short recurrence formula, making it unavailable. Moreover, the PLSS, maintaining the completeness of the residual sketch, strictly necessitates full access to the whole matrix $\mathbf{A}$ per iteration. This prevents the algorithm from leveraging local randomized sampling that can save operations. When encountering missing data, the PLSS does not work due to the impossibility of computing exact global residuals or executing full-matrix projections.

In this paper, we propose an efficient randomized framework, called Randomized Projected Linear Systems Solver (RPLSS), for solving Eq.\eqref{1-1}
whether it is under- or overdetermined, and whether it is consistent or not. The new framework lies in seamlessly integrating tailored randomized row or column selection strategies into the progressive sketching, which requires only partial matrix information per iteration. By utilizing localized sketching vectors that naturally accommodate missing or streaming data, the RPLSS maintains robust convergence and achieves substantial computational acceleration over state-of-the-art randomized solvers.
Moreover, the proposed solver harmonizes the local subspace finite termination property of the PLSS with the expected linear convergence of standard randomized methods. 

\subsection{ Our contributions}\label{subsection-1-4}
By incorporating the randomized
 Kaczmarz or coordinate descent methods into the RPLSS framework, we propose a family of randomized solvers for solving consistent or inconsistent linear systems. The contributions of this paper are as follows:

$\bullet$ An RPLSS with the Gaussian Kaczmarz (RPLSS-GK) method, by taking a random linear combination consisting of residuals and standard basis vectors, is developed for solving \eqref{1-1} when it is consistent. The RPLSS-GK
preserves the low cost while inheriting the finite-termination property of PLSS, demonstrating its robustness in row-missing scenarios.


$\bullet$ Drawing from the double sequence idea of the randomized extended Kaczmarz method, an extended version of RPLSS-GK is derived to solve \eqref{1-1} when it is inconsistent. The proposed method achieves expected linear convergence without requiring any row or column paving strategies.


$\bullet$ By randomly selecting column vectors as newly appended columns of the sketch, proportional to the square of the column norms, an RPLSS with the coordinate descent (RPLSS-CD) method is devised for solving \eqref{1-1} when it is inconsistent. Utilizing the orthogonal projection of PLSS to accumulate historical updates, the RPLSS-CD method effectively overcomes the slow convergence caused by single-component updates in traditional coordinate descent methods.


$\bullet$ Combining the current residual with randomly selected standard basis vectors flexibly, an RPLSS with the Gaussian coordinate descent (RPLSS-GCD) method is proposed. This method strikes an elegant balance between local coordinate updates and global residual feedback, which substantially accelerates its convergence on ill-conditioned and inconsistent problems.


The organization of this paper is as follows.  In Section \ref{section-2}, we first devise the RPLSS framework for solving Eq.\eqref{1-1}, and then propose two fast randomized row-action solvers, including RPLSS-GK and RPLSS-GEK, for the consistent and inconsistent cases, respectively.
 In Section \ref{section-3}, we propose two randomized column-action solvers. The convergence of all methods is also established. In Section \ref{section-4}, we provide abundant numerical results on large-scale sparse matrices, ill-conditioned systems, and applications with severe data missing, comparing the proposed algorithms against state-of-the-art methods to validate their superiority. Finally,  we conclude this paper by giving some remarks.

\section{RPLSS with Gaussian Kaczmarz methods}\label{section-2}
In this section, we seamlessly integrate randomized row- or column-selection strategies into the progressive sketching process, and then propose a randomized PLSS framework for the linear system \eqref{1-1}. Unlike the standard PLSS framework, the proposed framework only requires partial information of the coefficient matrix per iteration, which is particularly favorable for large-scale, memory-constrained, or streaming data problems. Moreover, by incrementally storing historical information in the sketching matrix, the  RPLSS also preserves the finite-termination property of PLSS. Indeed, by randomly constructing different sketch vectors $\mathbf{s}_k$, the various effective randomized methods can be developed for solving the consistent or inconsistent  Eq.\eqref{1-1}. 


As stated in \cite{Bjj}, the updates $\mathbf{p}_k=\mathbf{P}_{k-1}\mathbf{g}_{k-1}+\gamma_{k-1}\mathbf{y}_k$ are orthogonal, with $\mathbf{P}_k=[\mathbf{p}_1,\mathbf{p}_2,\dots,\mathbf{p}_k]$ and $\mathbf{y}_k=\mathbf{A}^T\mathbf{s}_k$. Following this
orthogonality, it is trivial to see that $$\mathbf{g}_{k-1}=-\gamma_{k-1}\mathbf{\Theta}_{k-1} ^{-1}\mathbf{P}_{k-1}^T\mathbf{y}_k,$$ where $\mathbf{\Theta}_{k-1}=\mathbf{P}_{k-1}^T\mathbf{P}_{k-1}$. Moreover, since $\mathbf{s}_k^T\mathbf{A}\mathbf{p}_k=\mathbf{s}_k^T\mathbf{r}_{k-1}$, it follows that $$\gamma_{k-1}=\frac{\mathbf{s}_k^T(\mathbf{b}-\mathbf{A}\mathbf{x}_{k-1})}{\|\mathbf{A}^T\mathbf{s}_k\|_2^2-\|\mathbf{d}_{k-1}\|_2^2},$$ where $\mathbf{d}_{k-1}=\mathbf{\Theta}_{k-1} ^{-1/2}\mathbf{P}_{k-1}^T\mathbf{A}^T\mathbf{s}_k$. Here, the $\mathbf{\Theta}_{k-1}^{-1/2}$denotes the inverse of the matrix square root of the diagonal matrix $\mathbf{\Theta}_{k-1}$. Therefore, the updates $\mathbf{p}_k$ are of the form

$$\mathbf{p}_k=\frac{\mathbf{s}_k^T(\mathbf{b}-\mathbf{A}\mathbf{x}_k)}{\|\mathbf{y}_k\|_2^2-\|\mathbf{d}_{k-1}\|_2^2}\mathbf{A}^T\mathbf{s}_k-\frac{\mathbf{s}_k^T(\mathbf{b}-\mathbf{A}\mathbf{x}_k)\mathbf{P}_{k-1}\mathbf{\Theta}_{k-1} ^{-1}\mathbf{P}_{k-1}^T}{\|\mathbf{y}_k\|_2^2-\|\mathbf{d}_{k-1}\|_2^2}\mathbf{A}^T\mathbf{s}_k.$$
If $\mathbf{W}\neq \mathbf{I}$, then we let $\mathbf{W}=\mathbf{R^T_{\mathbf{W}}}\mathbf{R_{\mathbf{W}}}$, that is, Cholesky decomposition. Define
$$\mathbf{x}_{k-1}=\mathbf{x}_{k-1}^\mathbf{W}\mathbf{R}_\mathbf{W}^{T},\quad\mathbf{p}_k=\mathbf{p}_k^\mathbf{W}\mathbf{R}_\mathbf{W}^{T},\quad\mathbf{A}_\mathbf{W} = \mathbf{A}\mathbf{R}_\mathbf{W}^T.$$
Then it follows that
$$\mathbf{p}_k^\mathbf{W} = \gamma_{k-1}^\mathbf{W} \Bigl( \mathbf{y}_k^\mathbf{W} - \mathbf{P}_{k-1}^\mathbf{W} \bigl((\mathbf{P}_{k-1}^{\mathbf{W}})^T\mathbf{P}_{k-1}^\mathbf{W}\bigr)^{-1} (\mathbf{P}_{k-1}^{\mathbf{W}})^T\mathbf{y}_k^\mathbf{W} \Bigr),$$
where $$\mathbf{y}_k^\mathbf{W} = \mathbf{A}_\mathbf{W}^T\mathbf{s}_{k},\,
\gamma_{k-1}^\mathbf{W} = \frac{\mathbf{s}_k^T(\mathbf{b}-\mathbf{A}\mathbf{x}_{k-1})}{\|\mathbf{y}_k^\mathbf{W}\|^2 - \|\mathbf{d}_{k-1}^\mathbf{W}\|^2},\,
\mathbf{d}_{k-1}^\mathbf{W} = \bigl((\mathbf{P}_{k-1}^{\mathbf{W}})^T\mathbf{P}_{k-1}^\mathbf{W}\bigr)^{-1/2}(\mathbf{P}_{k-1}^{\mathbf{W}})^T\mathbf{y}_k^\mathbf{W}.$$
From the above results, the update $\mathbf{p}_k$ can be rewritten as
$$\mathbf{p}_k=\gamma_{k-1}^{\mathbf{W}}(\mathbf{W}\mathbf{A}^T\mathbf{s}_k-\mathbf{P}_{k-1}(\mathbf{\Theta}_{k-1}^{\mathbf{W}}) ^{-1}\mathbf{P}_{k-1}^T\mathbf{A}^T\mathbf{s}_k),$$
where $\mathbf{d}_{k-1}^{\mathbf{W}}=\mathbf{\Theta}_{k-1} ^{-1/2}\mathbf{P}_{k-1}^T\mathbf{A}^T\mathbf{s}_k$ and $\mathbf{\Theta}_{k-1}^{\mathbf{W}}=\mathbf{P}_{k-1}^T\mathbf{W}^{-1}\mathbf{P}_{k-1}$. Remarkably, the PLSS with residual sketches requires the whole matrix-vector product $\mathbf{A}^T \mathbf{r}_{k}$ per iteration. When dealing with large-scale problems, this full-dimensional operation incurs high computational costs and slow convergence. 

By replacing the deterministic operation with a randomized subsampling of the rows or columns of $\mathbf{A}$, such as, the sketch vector $\mathbf{s}_k$, which consists of a random linear combination of the previous residual and the standard basis vectors, we introduce a randomized PLSS (RPLSS) framework for Eq.\eqref{1-1}. While sacrificing a marginal degree of precision, this framework significantly reduces the per-iteration cost, rendering it highly effective for large-scale applications or environments where the complete data is inaccessible. Now, we present the following RPLSS framework.
\begin{algorithm}
	\caption{RPLSS}
	\label{algo-1-1}
	\begin{algorithmic}[1]
        \REQUIRE $\mathbf{A}, \mathbf{b}$\\
        \ENSURE $\mathbf{x}_{k}$\\
            \STATE 
            if $\mathbf{B}^T\mathbf{B}\neq$ Empty, then \\
        \quad$\mathbf{W}=(\mathbf{B}^T\mathbf{B})^{-1}$
            \STATE 
            else \\
            \quad$\mathbf{W}=\mathbf{I}$
            \STATE
            end if
        \STATE
       For $k=0, 1, 2,\cdots $, do:\\
         \STATE
Randomly select a sketch vector $\mathbf{s}_k\in \mathbb{R}^{m}$
    \STATE
   Compute 
   $$\mathbf{p}_k=\gamma_{k-1}(\mathbf{W}\mathbf{A}^T\mathbf{s}_k-\mathbf{P}_{k-1}\mathbf{\Theta}_{k-1} ^{-1}\mathbf{P}_{k-1}^T\mathbf{A}^T\mathbf{s}_k)$$
   where $\gamma_{k-1}=\frac{\mathbf{s}_k^T(\mathbf{b}-\mathbf{A}\mathbf{x}_{k-1})}{\mathbf{s}_k^T\mathbf{A}\mathbf{W}\mathbf{A}^T\mathbf{s}_k-\|\mathbf{d}_{k-1}\|_2^2}$ with $\mathbf{d}_{k-1}=\mathbf{\Theta}_{k-1} ^{-1/2}\mathbf{P}_{k-1}^T\mathbf{A}^T\mathbf{s}_k$ and $\mathbf{\Theta}_{k-1}=\mathbf{P}_{k-1}^T\mathbf{W}^{-1}\mathbf{P}_{k-1}$.
            \STATE
            Set $\mathbf{x}_{k} =\mathbf{x}_{k-1}+\mathbf{p}_k$.

	\end{algorithmic}  
\end{algorithm}

As seen, the RPLSS framework gives rise to a family of different algorithms depending on the choice of the sketch vector $\mathbf{s}_k$. Here, we mainly consider two different the sketch vectors $\mathbf{s}_k=\sum\limits_{i\in \tau_k}\mathbf{r}^{(i)}_{k-1}\mathbf{e}_i$ or $\mathbf{s}_k=\sum\limits_{i\in\tau_k}\mathbf{A}\mathbf{e}_i$. The former is a  random linear
combination of the previous residual and the standard basis vectors, while the latter is a random submatrix that consists of some columns of $\mathbf{A}$. 

{From 
Theorem 3.1, Corollaries 3.2 and 3.3 presented in \cite{Bjj}, we make the following statement.
\begin{corollary}
\rm Let $\mathbf{x}_k$ be the iterates generated by Algorithm \ref{algo-1-1}, where at each step the sketching column is defined as $\mathbf{s}_k$(e.g. $\mathbf{s}_k = \sum_{i \in \tau_k} \mathbf{r}_{k-1}^{(i)} \mathbf{e}_i$), chosen so that the accumulated sketching matrix $\mathbf{S}_k = [\mathbf{s}_1,\dots,\mathbf{s}_k]$ maintains full column rank. Then there exists an integer $K \le \min(m,n)$ such that $\mathbf{x}_K = \mathbf{A}^\dagger \mathbf{b}$.
In other words, the algorithm converges to the least‑norm least‑squares solution in at most $\min(m,n)$ iterations.
\end{corollary}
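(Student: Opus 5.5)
We reduce to the projection characterization behind Theorem 3.1 of \cite{Bjj}. With $\mathbf{W}=\mathbf{I}$ (the general case follows by the change of variables $\mathbf{A}_\mathbf{W}=\mathbf{A}\mathbf{R}_\mathbf{W}^T$ introduced above), the first step is to show by induction on $k$ that the short recurrence in Algorithm \ref{algo-1-1} produces the iterate
\begin{equation*}
\mathbf{x}_k=\arg\min_{\mathbf{x}}\|\mathbf{x}-\mathbf{x}_0\|_2 \quad \text{s.t.}\quad \mathbf{S}_k^T\mathbf{A}\mathbf{x}=\mathbf{S}_k^T\mathbf{b}.
\end{equation*}
The three ingredients are as follows. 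First, $\mathbf{p}_k$ is the component of $\mathbf{A}^T\mathbf{s}_k$ orthogonal to $\mathcal{R}(\mathbf{P}_{k-1})$, scaled by $\gamma_{k-1}$. Second, $\mathcal{R}(\mathbf{P}_{k})=\mathcal{R}(\mathbf{A}^T\mathbf{S}_k)$. Third, $\gamma_{k-1}$ enforces the new constraint $\mathbf{s}_k^T\mathbf{A}\mathbf{x}_k=\mathbf{s}_k^T\mathbf{b}$.

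For the old constraints, $\mathbf{s}_j^T\mathbf{A}\mathbf{p}_k=(\mathbf{A}^T\mathbf{s}_j)^T\mathbf{p}_k=0$ for $j<k$, because $\mathbf{A}^T\mathbf{s}_j\in\mathcal{R}(\mathbf{P}_{k-1})$. Hence the old constraints are preserved. The displacement $\mathbf{x}_k-\mathbf{x}_0\in\mathcal{R}(\mathbf{A}^T\mathbf{S}_k)$, which is the orthogonality condition characterizing the minimum-norm projection. The denominator $\|\mathbf{y}_k\|^2-\|\mathbf{d}_{k-1}\|^2$ is exactly the squared norm of the orthogonal component of $\mathbf{y}_k$. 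It is positive provided $\mathbf{A}^T\mathbf{s}_k\notin\mathcal{R}(\mathbf{A}^T\mathbf{S}_{k-1})$. Here I would interpret the full-column-rank hypothesis as meaning $\mathbf{A}^T\mathbf{S}_k$ has full column rank, or equivalently that $\mathbf{s}_k\notin\mathcal{R}(\mathbf{S}_{k-1})+\mathcal{N}(\mathbf{A}^T)$.

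The second step is termination. While the process is nondegenerate, $\dim\mathcal{R}(\mathbf{A}^T\mathbf{S}_k)=k$, and this space lies in $\mathcal{R}(\mathbf{A}^T)$, whose dimension is at most $\operatorname{rank}(\mathbf{A})\le\min(m,n)$. So at some $K\le\min(m,n)$ either $\mathcal{R}(\mathbf{A}^T\mathbf{S}_K)=\mathcal{R}(\mathbf{A}^T)$, or no admissible new sketch exists. For residual-type sketches, the latter means $\gamma$'s numerator vanishes: $\mathbf{s}^T\mathbf{r}_K=0$ for every candidate $\mathbf{s}$ that would increase the dimension. Taking $\mathbf{x}_0=0$, the iterate $\mathbf{x}_K$ lies in $\mathcal{R}(\mathbf{A}^T)$.

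The final step identifies $\mathbf{x}_K$ with $\mathbf{A}^\dagger\mathbf{b}$, and this is the main obstacle. In the consistent case it is straightforward. If $\mathcal{R}(\mathbf{A}^T\mathbf{S}_K)=\mathcal{R}(\mathbf{A}^T)$, then the constraints $\mathbf{S}_K^T\mathbf{A}\mathbf{x}=\mathbf{S}_K^T\mathbf{b}$ cut out the same affine set as $\mathbf{A}\mathbf{x}=\mathbf{b}$. Indeed, their kernel is $\mathcal{N}(\mathbf{A})$ and the set contains $\mathbf{A}^\dagger\mathbf{b}$. The minimum-norm point from $\mathbf{x}_0=0$ is then $\mathbf{A}^\dagger\mathbf{b}$.

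For inconsistent $\mathbf{b}$, the constraint set contains the least-squares solution only if $\mathbf{S}_K^T\mathbf{b}_{\mathcal{R}(\mathbf{A})^\perp}=0$. I would try to ensure this by arguing that the sketch columns can be taken in $\mathcal{R}(\mathbf{A})$, which is the setting of Corollaries 3.2--3.3 in \cite{Bjj} and is also natural for the column sketches $\mathbf{s}_k=\mathbf{A}\mathbf{e}_i$. In that setting $\mathbf{S}_K^T\mathbf{b}=\mathbf{S}_K^T\mathbf{b}_{\mathcal{R}(\mathbf{A})}$, and the consistent argument applies with $\mathbf{b}$ replaced by $\mathbf{b}_{\mathcal{R}(\mathbf{A})}$, giving $\mathbf{A}^\dagger\mathbf{b}$.

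For residual-based row sketches in the inconsistent case, this step may genuinely fail without extra hypotheses. I would then state the result under the assumption $\mathcal{R}(\mathbf{S}_k)\subseteq\mathcal{R}(\mathbf{A})$, or restrict it to consistent systems.
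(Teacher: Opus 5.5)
You follow essentially the paper's route. The paper gives no argument of its own: it cites Theorem 3.1 and Corollaries 3.2--3.3 of Brust and Saunders, which are stated for consistent systems, and asserts in the remark after Algorithm~\ref{algo-2-1} that termination depends only on $\mathbf{S}_k$ having full column rank. Your induction (all sketched equations hold and $\mathbf{x}_k-\mathbf{x}_0\in\mathcal{R}(\mathbf{A}^T\mathbf{S}_k)$) plus a dimension count in $\mathcal{R}(\mathbf{A}^T)$ is the mechanism behind those results, made explicit. It is sound for consistent systems with $\mathbf{W}=\mathbf{I}$ and $\mathbf{x}_0\in\mathcal{R}(\mathbf{A}^T)$.

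Your suspicion about the inconsistent case is right: it is the statement, not your proof, that fails. Take $\mathbf{A}=(1,1)^T$, $\mathbf{b}=(2,0)^T$, $\mathbf{x}_0=\mathbf{0}$, $\mathbf{W}=\mathbf{I}$, so $\mathbf{A}^\dagger\mathbf{b}=1$ and $\min(m,n)=1$. The full residual sketch and the only nonzero partial one ($\tau_1=\{1\}$) both equal $(2,0)^T$, giving $\gamma_0=1$ and $\mathbf{x}_1=2$. Then $\mathbf{r}_1=(0,-2)^T$, and $\mathbf{s}_2=(0,-2)^T$ keeps $\mathbf{S}_2$ of full column rank. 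Yet $\mathbf{A}^T\mathbf{s}_2\in\mathcal{R}(\mathbf{P}_1)$ while $\mathbf{s}_2^T\mathbf{r}_1=4$, so $\gamma_1$ has a zero denominator. This example shows that the rank hypothesis must be placed on $\mathbf{A}^T\mathbf{S}_k$, as you did. It also shows that residual-type sketches cannot produce $\mathbf{A}^\dagger\mathbf{b}$ for inconsistent $\mathbf{b}$, which matches the paper's own remarks in the Introduction and in Section~\ref{subsection-2-2}. Restricting to consistent systems, or to $\mathcal{R}(\mathbf{S}_k)\subseteq\mathcal{R}(\mathbf{A})$, is the correct repair.

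Two points in your own argument still need fixing.

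First, general $\mathbf{W}$ does not ``follow by the change of variables.'' That reduction gives the minimum-$\|\cdot\|_{\mathbf{W}^{-1}}$ solution $\mathbf{W}\mathbf{A}^T(\mathbf{A}\mathbf{W}\mathbf{A}^T)^\dagger\mathbf{b}$, not $\mathbf{A}^\dagger\mathbf{b}$. For $\mathbf{A}=[1\ \ 1]$, $\mathbf{b}=2$, $\mathbf{W}=\mathrm{diag}(1,2)$, one step with any nonzero $\mathbf{s}_1$ gives $\mathbf{x}_1=(2/3,4/3)^T\neq(1,1)^T$. So you also need $\mathbf{W}=\mathbf{I}$ or $\mathcal{N}(\mathbf{A})=\{\mathbf{0}\}$.

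Second, you assert that the ``no admissible new sketch'' branch means all numerators vanish, but you neither justify this nor carry it to $\mathbf{x}_K=\mathbf{A}^\dagger\mathbf{b}$. Your final step treats only the case $\mathcal{R}(\mathbf{A}^T\mathbf{S}_K)=\mathcal{R}(\mathbf{A}^T)$. The gap closes quickly:
\begin{itemize}
\item In the consistent case, feasibility gives $\mathbf{x}_\star-\mathbf{x}_K\perp\mathcal{R}(\mathbf{A}^T\mathbf{S}_K)$.
\item So any candidate with $\mathbf{A}^T\mathbf{s}\in\mathcal{R}(\mathbf{A}^T\mathbf{S}_K)$ has $\mathbf{s}^T\mathbf{r}_K=(\mathbf{A}^T\mathbf{s})^T(\mathbf{x}_\star-\mathbf{x}_K)=0$.
\item For $\mathbf{s}=\sum_{i\in\tau}\mathbf{r}_K^{(i)}\mathbf{e}_i$ this quantity equals $\sum_{i\in\tau}(\mathbf{r}_K^{(i)})^2$.
\item Hence if every $\tau$ is degenerate, $\mathbf{r}_K=\mathbf{0}$, and $\mathbf{x}_K\in\mathcal{R}(\mathbf{A}^T)$ forces $\mathbf{x}_K=\mathbf{A}^\dagger\mathbf{b}$.
\end{itemize}

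Relatedly, your claim $\mathcal{R}(\mathbf{P}_k)=\mathcal{R}(\mathbf{A}^T\mathbf{S}_k)$ requires every $\gamma_{j-1}\neq0$. This is automatic for residual sketches, whose numerator is $\|\mathbf{s}_j\|_2^2$. It is not automatic for the sketches $\mathbf{A}\mathbf{e}_i$ in your inconsistent-case variant: there a zero numerator gives $\mathbf{p}_j=\mathbf{0}$ and a singular $\mathbf{\Theta}_j$, so such steps must be skipped.
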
}

\subsection{RPLSS-GK}\label{subsection-2-1}
The randomized Kaczmarz-type methods, a leading class of randomized solvers, have received significant attention due to their simplicity and efficiency. For instance, Needell and Tropp \cite{Needell2} proposed a randomized block Kaczmarz method that greatly accelerates the convergence of the standard RK, but suffers from the high computational expense of evaluating pseudo-inverses.
Then, Necoara \cite{Necoara} pioneered the randomized average block Kaczmarz (RaBK) method without any pseudo-inverses. Moreover, Bai and Wu \cite{Bai} proposed a fast greedy randomized Kaczmarz (GRK) method, which adaptively selects rows based on residual magnitudes.

Overall, these randomized Kaczmarz solvers demonstrate that utilizing only a subset of row information can yield efficient convergence when solving large-scale and consistent linear systems. Motivated by this, we devise a similar strategy that randomly samples a portion of the residual vector to form the sketch vector $\mathbf{s}_k$, thereby circumventing the full-dimensional residual operations mandated by PLSS per iteration. Here, we set $\mathbf{s}_k=\sum\limits_{i\in \tau_k}\mathbf{r}^{(i)}_{k-1}\mathbf{e}_i$, a linear combination of previous residual and standard basis vectors, where $\tau_k$ is a randomly selected set of row indices. By integrating this strategy into the RPLSS framework, we propose an efficient RPLSS variant based on the Gaussian Kaczmarz method for solving the consistent linear system \eqref{1-1}.


\begin{algorithm}
	\caption{RPLSS-GK}
	\label{algo-2-1}
	\begin{algorithmic}[1]
        \REQUIRE $\mathbf{A}, \mathbf{b}$\\
        \ENSURE $\mathbf{x}_{k}$\\
            \STATE 
            if $\mathbf{B}^T\mathbf{B}\neq$ Empty, then \\
        \quad$\mathbf{W}=(\mathbf{B}^T\mathbf{B})^{-1}$
            \STATE 
            else \\
            \quad$\mathbf{W}=\mathbf{I}$
            \STATE
            end if
        \STATE
       For $k=0, 1, 2,\cdots $, do:\\
         \STATE
Generate an indicator set $\tau_{k}$, i.e., choosing $l m$ rows of $\mathbf{A}$ by using the simple random sampling, where $0<l<1$
 \STATE
   Compute 
   $$\mathbf{s}_k=\sum_{i\in \tau_k}\mathbf{r}^{(i)}_{k-1}\mathbf{e}_i$$
    \STATE
   Compute 
   $$\mathbf{p}_k=\gamma_{k-1}(\mathbf{W}\mathbf{A}^T\mathbf{s}_k-\mathbf{P}_{k-1}\mathbf{\Theta}_{k-1} ^{-1}\mathbf{P}_{k-1}^T\mathbf{A}^T\mathbf{s}_k)$$
   where $\gamma_{k-1}=\frac{\mathbf{s}_k^T(\mathbf{b}-\mathbf{A}\mathbf{x}_{k-1})}{\mathbf{s}_k^T\mathbf{A}\mathbf{W}\mathbf{A}^T\mathbf{s}_k-\|\mathbf{d}_{k-1}\|_2^2}$ with $\mathbf{d}_{k-1}=\mathbf{\Theta}_{k-1} ^{-1/2}\mathbf{P}_{k-1}^T\mathbf{A}^T\mathbf{s}_k$ and $\mathbf{\Theta}_{k-1}=\mathbf{P}_{k-1}^T\mathbf{W}^{-1}\mathbf{P}_{k-1}$.
            \STATE
            Set $\mathbf{x}_{k} =\mathbf{x}_{k-1}+\mathbf{p}_k$.

	\end{algorithmic}  
\end{algorithm}

\begin{rem}\label{Rem-1}
\rm Note that, when the parameter $l=1$, Algorithm \ref{algo-2-1} reduces to the PLSS with residual sketches method. For the general parameter $0<l<1$, Algorithm \ref{algo-2-1} also possesses the finite termination because it depends only on whether $\mathbf{S}_k$ is column-full rank, rather than the specific form of  $\mathbf{s}_k$. In fact, for step $5$ of the algorithm \ref{algo-2-1}, different rules for constructing the random indicator set will result in different RPLSS Gaussian Kaczmarz methods.
\end{rem}


We now state some necessary lemmas before proceeding to the convergence analysis.
\begin{lemma}\label{lem-1}
{\rm Let $\mathbf{x}_* = \mathbf{A}^\dagger\mathbf{b}$ be the least-norm solution of Eq.\eqref{1-1}, and $\overline{\mathbf{x}}_k=\mathbf{x}_k-\mathbf{x}_{\star}$ be the error vector at $k$-th iterate. Then the  matrix $\mathbf{P}_k=[\mathbf{p}_1,\mathbf{p}_2,\dots,\mathbf{p}_k]$ with orthogonal columns generated by Algorithm \ref{algo-2-1} yields $$\mathbf{P}_k^T \overline{\mathbf{x}}_k = \mathbf{0}.$$}
\end{lemma}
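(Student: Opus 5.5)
Since the system is consistent, I will take $\mathbf{W}=\mathbf{I}$, so that $\mathbf{\Theta}_{k-1}=\mathbf{P}_{k-1}^T\mathbf{P}_{k-1}$ is diagonal. The argument is an induction on $k$. It uses two structural facts about Algorithm \ref{algo-2-1}. The first is that every $\mathbf{p}_k$ lies in $\mathcal{R}(\mathbf{A}^T)$. The second is that each $\mathbf{p}_k$ enforces the sketched equation $\mathbf{s}_k^T\mathbf{A}\mathbf{x}_k=\mathbf{s}_k^T\mathbf{b}$ while keeping all earlier sketched equations satisfied. With $\mathbf{b}=\mathbf{A}\mathbf{x}_\star$, this says the iterate $\mathbf{x}_k$ is the orthogonal projection of $\mathbf{x}_0$ onto the affine set $\{\mathbf{x}:\mathbf{S}_k^T\mathbf{A}\mathbf{x}=\mathbf{S}_k^T\mathbf{b}\}$. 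The claim $\mathbf{P}_k^T\overline{\mathbf{x}}_k=\mathbf{0}$ is the orthogonality condition of that projection, provided $\mathrm{span}(\mathbf{P}_k)=\mathrm{span}(\mathbf{A}^T\mathbf{S}_k)$ and $\mathbf{x}_0=\mathbf{0}$, or more generally $\mathbf{x}_0-\mathbf{x}_\star\perp\mathcal{R}(\mathbf{A}^T\mathbf{S}_k)$. I will make the standing assumption $\mathbf{x}_0=\mathbf{0}$ explicit; without it the statement needs $\overline{\mathbf{x}}_0\perp\mathcal{R}(\mathbf{A}^T)$.

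\emph{Step 1: span identity.} By the formula for $\mathbf{p}_k$, it equals $\gamma_{k-1}(\mathbf{I}-\mathbf{P}_{k-1}\mathbf{\Theta}_{k-1}^{-1}\mathbf{P}_{k-1}^T)\mathbf{y}_k$ with $\mathbf{y}_k=\mathbf{A}^T\mathbf{s}_k$. So $\mathbf{p}_k$ is $\mathbf{y}_k$ minus its projection onto $\mathrm{span}(\mathbf{P}_{k-1})$. Inductively this gives $\mathbf{P}_{k-1}^T\mathbf{p}_k=\mathbf{0}$, which is the orthogonality asserted in the lemma. It also gives $\mathrm{span}(\mathbf{P}_k)=\mathrm{span}(\mathbf{y}_1,\dots,\mathbf{y}_k)=\mathcal{R}(\mathbf{A}^T\mathbf{S}_k)$, using the full column rank of $\mathbf{A}^T\mathbf{S}_k$ assumed in the Corollary so that $\gamma_{k-1}\neq 0$ and the denominator is positive.

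\emph{Step 2: sketched residual identities.} I will show $\mathbf{s}_j^T\mathbf{r}_k=0$ for all $j\le k$, which is $\mathbf{y}_j^T\overline{\mathbf{x}}_k=0$ because $\mathbf{r}_k=-\mathbf{A}\overline{\mathbf{x}}_k$.
\begin{itemize}
\item For $j=k$, the choice of $\gamma_{k-1}$ gives
\[
\mathbf{s}_k^T\mathbf{A}\mathbf{p}_k=\gamma_{k-1}\bigl(\|\mathbf{y}_k\|_2^2-\|\mathbf{d}_{k-1}\|_2^2\bigr)=\mathbf{s}_k^T\mathbf{r}_{k-1}.
\]
Here I use $\mathbf{y}_k^T\mathbf{P}_{k-1}\mathbf{\Theta}_{k-1}^{-1}\mathbf{P}_{k-1}^T\mathbf{y}_k=\|\mathbf{d}_{k-1}\|_2^2$. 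Hence $\mathbf{s}_k^T\mathbf{r}_k=0$.
\item For $j<k$, $\mathbf{s}_j^T\mathbf{r}_k=\mathbf{s}_j^T\mathbf{r}_{k-1}-\mathbf{y}_j^T\mathbf{p}_k$. The first term vanishes by induction. The second vanishes because $\mathbf{y}_j\in\mathrm{span}(\mathbf{P}_{k-1})\perp\mathbf{p}_k$.
\end{itemize}

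\emph{Step 3: conclusion.} Combining the two steps, $\overline{\mathbf{x}}_k\perp\mathrm{span}(\mathbf{y}_1,\dots,\mathbf{y}_k)=\mathrm{span}(\mathbf{P}_k)$, which is exactly $\mathbf{P}_k^T\overline{\mathbf{x}}_k=\mathbf{0}$.

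Alternatively, one can argue directly. Write $\mathbf{p}_j^T\overline{\mathbf{x}}_k=\mathbf{p}_j^T\overline{\mathbf{x}}_{k-1}+\mathbf{p}_j^T\mathbf{p}_k$. For $j<k$ both terms vanish, the first by induction and the second by orthogonality. It then remains to show $\mathbf{p}_k^T\overline{\mathbf{x}}_k=0$, i.e. $\mathbf{p}_k^T\overline{\mathbf{x}}_{k-1}=-\|\mathbf{p}_k\|^2$. This follows from $\mathbf{p}_k^T\overline{\mathbf{x}}_{k-1}=\gamma_{k-1}\mathbf{y}_k^T\overline{\mathbf{x}}_{k-1}=-\gamma_{k-1}\mathbf{s}_k^T\mathbf{r}_{k-1}$, using the inductive hypothesis $\mathbf{P}_{k-1}^T\overline{\mathbf{x}}_{k-1}=\mathbf{0}$. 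It also uses $\|\mathbf{p}_k\|^2=\gamma_{k-1}^2(\|\mathbf{y}_k\|^2-\|\mathbf{d}_{k-1}\|^2)=\gamma_{k-1}\mathbf{s}_k^T\mathbf{r}_{k-1}$. This version is cleaner and I will present it, with the base case $k=1$ being the same computation with $\mathbf{P}_0$ empty.

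\emph{Main obstacle.} The hard part is bookkeeping with signs and conventions, not any deep step. Concretely:
\begin{itemize}
\item I must confirm that $\mathbf{d}_{k-1}$ as defined yields the projection norm. This needs $\mathbf{\Theta}_{k-1}$ to be diagonal and positive, which comes from Step 1.
\item I must confirm the denominator $\|\mathbf{y}_k\|^2-\|\mathbf{d}_{k-1}\|^2>0$. This holds exactly when $\mathbf{y}_k\notin\mathrm{span}(\mathbf{P}_{k-1})$, i.e. under the full-rank hypothesis.
\item I must state clearly where consistency ($\mathbf{b}=\mathbf{A}\mathbf{x}_\star$) enters, namely in $\mathbf{r}_k=-\mathbf{A}\overline{\mathbf{x}}_k$.
\item If a breakdown occurs with $\mathbf{s}_k^T\mathbf{r}_{k-1}=0$, then $\mathbf{p}_k=\mathbf{0}$ and the identity holds trivially.
\end{itemize}
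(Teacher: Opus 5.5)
Your proof is correct, and the version you say you will present has the same structure as the paper's. Both use induction on $k$ and split $\mathbf{P}_k^T\overline{\mathbf{x}}_k$ into two parts. The block $\mathbf{P}_{k-1}^T\overline{\mathbf{x}}_{k-1}+\mathbf{P}_{k-1}^T\mathbf{p}_k$ is zero by the hypothesis and orthogonality. The new entry $\mathbf{p}_k^T\overline{\mathbf{x}}_{k-1}+\|\mathbf{p}_k\|_2^2$ vanishes because consistency gives $\mathbf{A}\overline{\mathbf{x}}_{k-1}=-\mathbf{r}_{k-1}$.

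The real difference is how that last entry is evaluated.
\begin{itemize}
\item The paper substitutes the closed-form sketch-and-project update \eqref{a1-5} with the whole matrix $\mathbf{S}_{m+1}$. After that, the entry vanishes with no visible use of the inductive hypothesis.
\item You work with the short recurrence that Algorithm \ref{algo-2-1} actually executes. You use $\mathbf{P}_{k-1}^T\overline{\mathbf{x}}_{k-1}=\mathbf{0}$ to replace $\mathbf{y}_k^T(\mathbf{I}-\mathbf{P}_{k-1}\mathbf{\Theta}_{k-1}^{-1}\mathbf{P}_{k-1}^T)\overline{\mathbf{x}}_{k-1}$ by $\mathbf{y}_k^T\overline{\mathbf{x}}_{k-1}$.
\end{itemize}

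Your route is the more self-contained one, because the recurrence and \eqref{a1-5} do not agree automatically. Under the nondegeneracy that gives $\mathcal{R}(\mathbf{P}_{k-1})=\mathcal{R}(\mathbf{A}^T\mathbf{S}_{k-1})$, their agreement at step $k$ is equivalent to $\mathbf{P}_{k-1}^T\overline{\mathbf{x}}_{k-1}=\mathbf{0}$. So the paper uses the inductive hypothesis implicitly, through the identification it borrows from PLSS. Yours needs only that $\mathbf{\Theta}_{k-1}$ is invertible and $\|\mathbf{y}_k\|_2^2-\|\mathbf{d}_{k-1}\|_2^2\neq 0$. It does not even need the span identity of your Step 1. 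The paper's version is shorter once \eqref{a1-5} is taken for granted.

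One correction: drop the standing assumption $\mathbf{x}_0=\mathbf{0}$ and the caveat that goes with it.
\begin{itemize}
\item Each $\mathbf{p}_j$ lies in $\mathrm{span}\{\mathbf{y}_1,\dots,\mathbf{y}_j\}$. So the conclusion follows from $\mathbf{S}_k^T\mathbf{A}\overline{\mathbf{x}}_k=\mathbf{0}$, which is \emph{feasibility} of $\mathbf{x}_k$ for the sketched equations. It is not the orthogonality condition of the projection, and feasibility holds for every starting vector.
\item Neither of your arguments uses anything about $\mathbf{x}_0$. The first step reads $\mathbf{p}_1^T\overline{\mathbf{x}}_1=\gamma_0\mathbf{y}_1^T\overline{\mathbf{x}}_0+\gamma_0^2\|\mathbf{y}_1\|_2^2=-\gamma_0\mathbf{s}_1^T\mathbf{r}_0+\gamma_0\mathbf{s}_1^T\mathbf{r}_0=0$ for arbitrary $\mathbf{x}_0$.
\item The caveat is also self-contradictory. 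Taking $\mathbf{x}_0=\mathbf{0}$ gives $\overline{\mathbf{x}}_0=-\mathbf{x}_\star\in\mathcal{R}(\mathbf{A}^T)$, not $\overline{\mathbf{x}}_0\perp\mathcal{R}(\mathbf{A}^T)$.
\end{itemize}
Keeping the caveat would have you state a weaker lemma than the one your proof actually establishes.
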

\begin{proof}
We prove the assertion by induction on the $k$-th iterate. For $k = 0$, it is trivial to see that the lemma is true.
Assume that $$\mathbf{P}_m^T \overline{\mathbf{x}}_m = \mathbf{0}$$ holds for $k = m$ ($m > 0$).
Then for $k = m+1$, it follows that
\begin{equation}\label{a2-1}
\mathbf{P}_{m+1}^T \overline{\mathbf{x}}_{m+1} = \begin{bmatrix} \mathbf{P}_m^T \\ \mathbf{p}_{m+1}^T \end{bmatrix} (\overline{\mathbf{x}}_m + \mathbf{p}_{m+1}) = \begin{bmatrix} \mathbf{P}_m^T \overline{\mathbf{x}}_m + \mathbf{P}_m^T \mathbf{p}_{m+1} \\ \mathbf{p}_{m+1}^T \overline{\mathbf{x}}_m + \|\mathbf{p}_{m+1}\|_2^2 \end{bmatrix}.  
\end{equation}
One can see from the assumption that the upper part of Eq.\eqref{a2-1} is a zero vector. For the lower part, it follows from Eq.\eqref{a1-5} that
$$
\begin{aligned}
\mathbf{p}_{m+1}^T \overline{\mathbf{x}}_m&= \mathbf{r}_m^T \mathbf{S}_{m+1} \left(\mathbf{S}_{m+1}^T \mathbf{A} \mathbf{A}^T \mathbf{S}_{m+1}\right)^{-1} \mathbf{S}_{m+1}^T \mathbf{A} \overline{\mathbf{x}}_m\\
&=-\mathbf{r}_m^T \mathbf{S}_{m+1} \left(\mathbf{S}_{m+1}^T \mathbf{A} \mathbf{A}^T \mathbf{S}_{m+1}\right)^{-1} \mathbf{S}_{m+1}^T \mathbf{A} \mathbf{r}_m
\end{aligned}
$$
and
$$
\begin{aligned}
\|\mathbf{p}_{m+1}\|_2^2 &= \mathbf{p}_{m+1}^T \mathbf{p}_{m+1} \\
&= \mathbf{r}_m^T \mathbf{S}_{m+1} \left(\mathbf{S}_{m+1}^T \mathbf{A} \mathbf{A}^T \mathbf{S}_{m+1}\right)^{-1}  \mathbf{S}_{m+1}^T \mathbf{r}_m.
\end{aligned}
$$
This shows that the lower part equals zero. Therefore, the assertion is surely valid.
\end{proof}

\begin{lemma}\cite{Necoara}\label{lem-2}
\rm Let $p_i = \mathbb{P}(i \in \tau_k)$ with $\tau_k$ being a random subset. Consider any deterministic sequence of scalars $\{\eta_i\}_{i=1}^m$, the following  holds:
$$\mathbb{E}_{\tau_k}\left[ \sum_{i \in \tau_k}  \eta _i \right] = \sum_{i=1}^m p_i  \eta _i.$$
\end{lemma}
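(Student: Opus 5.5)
The statement is linearity of expectation for a random subset, and I will prove it by rewriting the random sum with indicator variables. For each $i\in\{1,\dots,m\}$ let $\mathbf{1}_{\{i\in\tau_k\}}$ be the indicator of the event $i\in\tau_k$. Since $\tau_k\subseteq\{1,\dots,m\}$, the random sum becomes a fixed-length sum with random weights:
$$\sum_{i\in\tau_k}\eta_i=\sum_{i=1}^m \mathbf{1}_{\{i\in\tau_k\}}\,\eta_i .$$
This identity holds pointwise, for every realization of $\tau_k$. It turns a sum over a random index set into a sum of $m$ terms indexed deterministically.

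Next I take $\mathbb{E}_{\tau_k}$ of both sides. The sum is finite and each $\eta_i$ is deterministic, meaning it does not depend on $\tau_k$. So linearity of expectation lets me pull each $\eta_i$ outside:
$$\mathbb{E}_{\tau_k}\Big[\sum_{i=1}^m \mathbf{1}_{\{i\in\tau_k\}}\eta_i\Big]=\sum_{i=1}^m \eta_i\,\mathbb{E}_{\tau_k}\big[\mathbf{1}_{\{i\in\tau_k\}}\big].$$
Finally, $\mathbb{E}_{\tau_k}[\mathbf{1}_{\{i\in\tau_k\}}]=\mathbb{P}(i\in\tau_k)=p_i$, which gives the claimed formula.

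The argument has no real obstacle. The only point needing care is the word ``deterministic''. In the later convergence analysis the $\eta_i$ will be quantities such as $(\mathbf{r}^{(i)}_{k-1})^2$, which depend on $\tau_0,\dots,\tau_{k-1}$ but not on $\tau_k$. The lemma should therefore be read conditionally: $\mathbb{E}_{\tau_k}$ is taken with the history fixed, and the $\eta_i$ are measurable with respect to that history. Under this reading the same proof applies verbatim. It uses only that $\tau_k$ is drawn after the $\eta_i$ are fixed, and it needs no independence among the events $\{i\in\tau_k\}$. For the simple random sampling of $lm$ rows used in Algorithm \ref{algo-2-1}, this gives $p_i=l$ for every $i$.
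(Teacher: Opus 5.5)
Your argument is correct. Writing $\sum_{i\in\tau_k}\eta_i=\sum_{i=1}^m \mathbf{1}_{\{i\in\tau_k\}}\eta_i$ and applying linearity of expectation is the standard proof, and the paper gives no proof of its own here, only a citation to Necoara. Your remark that the lemma must be read conditionally on the history $\mathcal{F}_{k-1}$ matches how the paper applies it later: the $\eta_i$ (e.g.\ $|\mathbf{r}^{(i)}_{k-1}|^2$) are fixed by that history, and $p_i=l$ under simple random sampling of $lm$ rows.
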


\begin{lemma}\cite{Necoara}\label{lem-3}
\rm Consider without-replacement uniform sampling of a fixed block of size $\tau$ from the $m$ rows of $\mathbf{A}$. Then every row $i \in [m]$ has the same probability of being included in the random subset $\tau_k$, and this probability is given by
$$p_i= \mathbb{P}(i \in \tau_k)=\frac{\binom{m-1}{\tau-1}}{\binom{m}{\tau}} = \frac{\tau}{m},$$ 
where $\tau = |\tau_k|$ denotes the fixed cardinality of the sampled block.  
\end{lemma}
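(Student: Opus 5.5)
The statement to prove is Lemma~\ref{lem-3}: under uniform sampling without replacement of a block of fixed size $\tau$ from $[m]$, every row is included with probability $\tau/m$. The plan is a direct counting argument.

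First, the sample space. Uniform sampling without replacement of a block of size $\tau$ means $\tau_k$ is distributed uniformly over all $\tau$-element subsets of $[m]$. There are $\binom{m}{\tau}$ such subsets, and each has probability $1/\binom{m}{\tau}$.

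Second, count the subsets containing a fixed row $i\in[m]$. Each such subset is determined by choosing its remaining $\tau-1$ elements from the other $m-1$ indices, so there are $\binom{m-1}{\tau-1}$ of them. Because every subset is equally likely,
\[
p_i=\mathbb{P}(i\in\tau_k)=\frac{\binom{m-1}{\tau-1}}{\binom{m}{\tau}}.
\]
This expression does not depend on $i$, which proves that every row has the same inclusion probability.

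Third, simplify using the identity $\binom{m}{\tau}=\frac{m}{\tau}\binom{m-1}{\tau-1}$, which follows from expanding both sides as factorials. This gives $p_i=\tau/m$.

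As a cross-check, and as an alternative route to the value of $p_i$, apply Lemma~\ref{lem-2} with $\eta_i\equiv 1$. This gives $\sum_{i=1}^m p_i=\mathbb{E}|\tau_k|=\tau$. Combined with the symmetry $p_1=\dots=p_m$ established above, it again yields $p_i=\tau/m$.

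There is no real obstacle here. The only point that needs care is stating explicitly that "uniform without replacement" induces the uniform distribution on $\tau$-subsets. This holds even if the indices are drawn sequentially: each ordered draw has probability $\frac{(m-\tau)!}{m!}$, and each unordered set arises from $\tau!$ orderings, so each set has probability $\frac{\tau!\,(m-\tau)!}{m!}=1/\binom{m}{\tau}$.
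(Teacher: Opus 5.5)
Your proposal is correct and takes the same approach as the paper. The paper gives no separate proof (it cites Necoara), but its displayed ratio $\binom{m-1}{\tau-1}/\binom{m}{\tau}=\tau/m$ is exactly your count of the $\tau$-subsets containing row $i$ among $\binom{m}{\tau}$ equally likely subsets; your sequential-draw justification and the cross-check via Lemma~\ref{lem-2} with $\eta_i\equiv 1$ are valid additions.
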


\begin{lemma}\cite{Sed}\label{lem-4}
\rm Let $X$ be a real‑valued random variable and $Y$ be a random variable such that $Y>0$ almost surely. Assume that $\mathbb{E}[X^2/Y]$ and $\mathbb{E}[Y]$ are finite. Then $$\mathbb{E} \left[ \frac{X^2}{Y} \right] \geq \frac{\left( \mathbb{E}[|X|] \right)^2}{\mathbb{E}[Y]} \geq \frac{\left( \mathbb{E}[X] \right)^2}{\mathbb{E}[Y]}.$$   
\end{lemma}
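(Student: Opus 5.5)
This is the two-sided Jensen-type inequality: Cauchy--Schwarz gives the first bound, and $|\mathbb{E}X|\le\mathbb{E}|X|$ gives the second.

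For the first inequality, I will apply the Cauchy--Schwarz inequality in $L^2(\mathbb{P})$ after writing $|X|$ as the product of $|X|/\sqrt{Y}$ and $\sqrt{Y}$. This factorization is legitimate because $Y>0$ almost surely, so $\sqrt{Y}$ is well defined and nonzero almost surely. It gives
\[
\mathbb{E}[|X|]=\mathbb{E}\Bigl[\tfrac{|X|}{\sqrt{Y}}\sqrt{Y}\Bigr]\le \Bigl(\mathbb{E}\Bigl[\tfrac{X^2}{Y}\Bigr]\Bigr)^{1/2}\bigl(\mathbb{E}[Y]\bigr)^{1/2}.
\]
Both factors on the right are finite by assumption, so $\mathbb{E}[|X|]$ is finite as well, and hence so is $\mathbb{E}[X]$. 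Squaring both sides gives $(\mathbb{E}|X|)^2\le \mathbb{E}[X^2/Y]\,\mathbb{E}[Y]$. Since $Y>0$ almost surely, $\mathbb{E}[Y]>0$, and dividing by $\mathbb{E}[Y]$ yields the first inequality.

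For the second inequality, I use $|\mathbb{E}[X]|\le \mathbb{E}[|X|]$, which is the triangle inequality for integrals. Squaring gives $(\mathbb{E}X)^2\le(\mathbb{E}|X|)^2$, and dividing by $\mathbb{E}[Y]>0$ finishes the chain.

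The main point needing care is well-definedness: we need $\mathbb{E}[Y]>0$ so that the division is allowed, and we need integrability of $X$. Positivity holds because $Y>0$ almost surely. Integrability of $X$ is not assumed directly but follows from the Cauchy--Schwarz step above. An alternative route would be to apply Jensen's inequality to the convex function $(u,v)\mapsto u^2/v$ on $\mathbb{R}\times(0,\infty)$, but Cauchy--Schwarz is more direct.
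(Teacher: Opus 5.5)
Your proof is correct and complete. The paper gives no proof of this lemma; it simply quotes it from \cite{Sed}, so there is no competing route to compare against. Your Cauchy--Schwarz factorization $|X| = \bigl(|X|/\sqrt{Y}\bigr)\cdot\sqrt{Y}$ is the standard argument for this probabilistic Engel/Sedrakyan-type inequality.

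You also handle the two technical points properly. Integrability of $X$ is not assumed but follows from Cauchy--Schwarz, since both factors have finite second moments $\mathbb{E}[X^2/Y]$ and $\mathbb{E}[Y]$. Division by $\mathbb{E}[Y]$ is legitimate because $\mathbb{E}[Y]>0$: a nonnegative random variable with zero mean vanishes almost surely, which contradicts $Y>0$ almost surely.
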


{\begin{lemma}\label{lem-5}
{\rm Let $\mathbf{x}_* = \mathbf{A}^\dagger\mathbf{b}$ be the least-norm solution of Eq.\eqref{1-1}, and $\overline{\mathbf{x}}_k=\mathbf{x}_k-\mathbf{x}_{\star}$ be the error vector at $k$-th iterate. Then
$$1-\frac{l\lambda_{min}^2(\mathbf{A}^T\mathbf{A})}{\sin^2(\theta_{k-1})\lambda_{max}^2(\mathbf{A}^T\mathbf{A})}\geq0,
$$
where $\theta_{k-1}$ is the angle between the vector $\mathbf{A}^T\mathbf{s}_k$ and the column space of $\mathbf{P}_{k-1}$.
}
\end{lemma}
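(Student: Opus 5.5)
We need to prove $l\lambda_{\min}^2/(\sin^2\theta_{k-1}\lambda_{\max}^2)\le 1$, i.e.
\[
l\,\lambda_{\min}^2(\mathbf{A}^T\mathbf{A})\le \sin^2(\theta_{k-1})\,\lambda_{\max}^2(\mathbf{A}^T\mathbf{A}).
\]
This cannot hold for an arbitrary angle: if $\theta_{k-1}\to 0$ the right side vanishes. The plan is therefore to identify the quantity as a contraction factor and obtain nonnegativity from the norm identity. The natural route is to show that this expression bounds, from above, the ratio $\mathbb{E}_k\|\overline{\mathbf{x}}_k\|_2^2/\|\overline{\mathbf{x}}_{k-1}\|_2^2$, or more precisely that the quantity subtracted from $1$ is at most the relative decrease $\|\mathbf{p}_k\|_2^2/\|\overline{\mathbf{x}}_{k-1}\|_2^2\le 1$.

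First, from Lemma \ref{lem-1} and the orthogonality of $\mathbf{p}_k$ to $\overline{\mathbf{x}}_k$, we have the Pythagorean identity $\|\overline{\mathbf{x}}_k\|_2^2=\|\overline{\mathbf{x}}_{k-1}\|_2^2-\|\mathbf{p}_k\|_2^2$. Hence $\|\mathbf{p}_k\|_2^2\le\|\overline{\mathbf{x}}_{k-1}\|_2^2$, and the same holds after taking $\mathbb{E}_k$.

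Next, compute $\|\mathbf{p}_k\|_2^2$ explicitly with $\mathbf{W}=\mathbf{I}$. Writing $\mathbf{y}_k=\mathbf{A}^T\mathbf{s}_k$, the vector $\mathbf{p}_k$ is $\gamma_{k-1}$ times the component of $\mathbf{y}_k$ orthogonal to $\mathcal{R}(\mathbf{P}_{k-1})$. That component has norm squared $\|\mathbf{y}_k\|^2-\|\mathbf{d}_{k-1}\|^2=\sin^2\theta_{k-1}\|\mathbf{y}_k\|^2$. Therefore
\[
\|\mathbf{p}_k\|_2^2=\frac{(\mathbf{s}_k^T\mathbf{r}_{k-1})^2}{\sin^2\theta_{k-1}\|\mathbf{A}^T\mathbf{s}_k\|_2^2}.
\]

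We then lower-bound this in expectation using Lemma \ref{lem-4}. Since $\mathbf{s}_k^T\mathbf{r}_{k-1}=\sum_{i\in\tau_k}(\mathbf{r}_{k-1}^{(i)})^2$, Lemmas \ref{lem-2} and \ref{lem-3} with $p_i=l$ give $\mathbb{E}[\mathbf{s}_k^T\mathbf{r}_{k-1}]=l\|\mathbf{r}_{k-1}\|^2$. Also $\mathbb{E}\|\mathbf{A}^T\mathbf{s}_k\|^2\le\lambda_{\max}(\mathbf{A}^T\mathbf{A})\,\mathbb{E}\|\mathbf{s}_k\|^2=l\lambda_{\max}\|\mathbf{r}_{k-1}\|^2$. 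Because $\mathbf{r}_{k-1}=-\mathbf{A}\overline{\mathbf{x}}_{k-1}$ with $\overline{\mathbf{x}}_{k-1}\in\mathcal{R}(\mathbf{A}^T)$ in the consistent case, we have $\|\mathbf{r}_{k-1}\|^2\ge\lambda_{\min}\|\overline{\mathbf{x}}_{k-1}\|^2$. Treating $\theta_{k-1}$ as a fixed bound (or using its worst case over $\tau_k$), this yields
\[
\mathbb{E}_k\|\mathbf{p}_k\|^2\ge\frac{l\,\lambda_{\min}}{\sin^2\theta_{k-1}\lambda_{\max}}\|\overline{\mathbf{x}}_{k-1}\|^2\ge\frac{l\,\lambda_{\min}^2}{\sin^2\theta_{k-1}\lambda_{\max}^2}\|\overline{\mathbf{x}}_{k-1}\|^2,
\]
where the last step uses $\lambda_{\min}/\lambda_{\max}\le1$. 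Combining this with the first step gives the claim whenever $\overline{\mathbf{x}}_{k-1}\neq\mathbf{0}$; if $\overline{\mathbf{x}}_{k-1}=\mathbf{0}$, the algorithm has terminated and the statement is vacuous.

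The main obstacle is that $\theta_{k-1}$ depends on the random set $\tau_k$, so it cannot simply be pulled out of $\mathbb{E}_k$. I would handle this by defining $\sin^2\theta_{k-1}$ as the maximum over realizations, which only strengthens the lower bound and is consistent with the statement. A second subtlety is that $\|\mathbf{r}_{k-1}\|^2\ge\lambda_{\min}\|\overline{\mathbf{x}}_{k-1}\|^2$ requires $\overline{\mathbf{x}}_{k-1}\in\mathcal{R}(\mathbf{A}^T)$. This holds when $\mathbf{x}_0\in\mathcal{R}(\mathbf{A}^T)$, since every $\mathbf{p}_k$ lies in $\mathcal{R}(\mathbf{A}^T)$.
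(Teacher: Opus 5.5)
You correctly flag the real obstacle, but your repair does not remove it. Your pointwise ingredients are fine. The identity $\|\overline{\mathbf{x}}_k\|_2^2=\|\overline{\mathbf{x}}_{k-1}\|_2^2-\|\mathbf{p}_k\|_2^2$ follows from Lemma~\ref{lem-1}, and $\|\mathbf{p}_k\|_2^2=(\mathbf{s}_k^T\mathbf{r}_{k-1})^2/(\sin^2(\theta_{k-1})\|\mathbf{A}^T\mathbf{s}_k\|_2^2)$. Since $\mathbf{s}_k^T\mathbf{r}_{k-1}=-\overline{\mathbf{x}}_{k-1}^T\mathbf{A}^T\mathbf{s}_k$, the inequality $\|\mathbf{p}_k\|_2^2\le\|\overline{\mathbf{x}}_{k-1}\|_2^2$ is, realization by realization, equivalent to the paper's Cauchy--Schwarz step $|\overline{\mathbf{x}}_{k-1}^T\mathbf{y}_k|^2\le\|\overline{\mathbf{x}}_{k-1}\|_2^2\sin^2(\theta_{k-1})\|\mathbf{y}_k\|_2^2$. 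You then average via Lemma~\ref{lem-4} and the clean bound $\mathbb{E}_k\|\mathbf{A}^T\mathbf{s}_k\|_2^2\le l\lambda_{max}(\mathbf{A}^T\mathbf{A})\|\mathbf{r}_{k-1}\|_2^2$, whereas the paper averages the two sides separately. So your route is essentially the paper's, and it breaks at the same point: $\theta_{k-1}$ depends on $\tau_k$, and both arguments move $\sin^2(\theta_{k-1})$ outside $\mathbb{E}_{\tau_k}$.

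Replacing $\sin^2(\theta_{k-1})$ by $M_{k-1}:=\max_{\tau_k}\sin^2(\theta_{k-1})$ does make your chain valid. But what it then proves is $l\lambda_{min}(\mathbf{A}^T\mathbf{A})\le M_{k-1}\lambda_{max}(\mathbf{A}^T\mathbf{A})$, a statement about the most favourable sample rather than the realized angle. A larger denominator weakens the conclusion; it does not strengthen it.

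No argument can give the realized-angle claim, because it is false. Take $\mathbf{A}$ with rows $(\epsilon,1),(\epsilon,1),(1,0),(1,0)$, $l=1/2$, $\mathbf{x}_0=\mathbf{0}$ and $\mathbf{x}_\star=(-\epsilon/(1+\epsilon^2),1)^T$.
If $\tau_1=\{1,3\}$, then $\mathbf{A}^T\mathbf{s}_1\parallel\mathbf{e}_2$. Hence $\mathbf{x}_1=(0,1)^T$, $\mathcal{R}(\mathbf{P}_1)=\mathrm{span}(\mathbf{e}_2)$ and $\overline{\mathbf{x}}_1\parallel\mathbf{e}_1$.
If then $\tau_2=\{1,2\}$, we get $\mathbf{A}^T\mathbf{s}_2\parallel(\epsilon,1)^T$, so $\sin^2(\theta_1)=\epsilon^2/(1+\epsilon^2)$, while $\lambda_{min}/\lambda_{max}\to1$ as $\epsilon\to0$.
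For $\epsilon=0.1$ the bracket is about $1-33.9<0$. This happens on an event of probability $1/36$, with $\mathbf{S}_2$ of full column rank.

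What your argument does prove, verbatim, is the lemma with a deterministic surrogate for the angle. The surrogate can be $M_{k-1}$, or the average of $\sin^{-2}(\theta_{k-1})$ over $\tau_k$ weighted by $(\mathbf{s}_k^T\mathbf{r}_{k-1})^2/\|\mathbf{A}^T\mathbf{s}_k\|_2^2$. In that form it even yields the unsquared ratio $l\lambda_{min}/\lambda_{max}$, but that is a different statement from the one posed.
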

\begin{proof}
Define $\mathbf{y}_k=\mathbf{A}^T\mathbf{s}_k$ and $\mathbf{Q}_{k-1}=\mathbf{P}_{k-1}\mathbf{\Theta}_{k-1} ^{-1}\mathbf{P}_{k-1}^T$. By taking the conditional expectation of block $\tau_k$ over the entire history $\mathcal{F}_{k-1} = \{\tau_1,\cdots, \tau_{k-1}\}$, it follows that
$$
\begin{aligned}
\mathbb{E}_{\tau_k}[|\overline{\mathbf{x}}_{k-1}^T\mathbf{y}_k|^2|\mathcal{F}_{k-1}]&=(\mathbb{E}_{\tau_k}[\|\mathbf{A}_{\tau{_k}}\overline{\mathbf{x}}_{k-1}\|_2^2|\mathcal{F}_{k-1}])^2\\
&=(l\|\mathbf{A}\overline{\mathbf{x}}_{k-1}\|_2^2)^2\\
&\geq l^2\lambda_{min}^2(\mathbf{A}^T\mathbf{A})\|\overline{\mathbf{x}}_{k-1}\|_2^4.
\end{aligned}
$$
Form Lemma \ref{lem-1}, we have 
$$
\begin{aligned}
|\overline{\mathbf{x}}_{k-1}^T\mathbf{y}_k|^2&=|\overline{\mathbf{x}}_{k-1}^T\mathbf{Q}_{k-1}\mathbf{y}_k+\overline{\mathbf{x}}_{k-1}^T(I-\mathbf{Q}_{k-1})\mathbf{y}_k|\\
&=|\overline{\mathbf{x}}_{k-1}^T(I-\mathbf{Q}_{k-1})\mathbf{y}_k|^2\\
&\leq\|\overline{\mathbf{x}}_{k-1}\|_2^2\|(I-\mathbf{Q}_{k-1})\mathbf{y}_k\|_2^2\\
&\leq\|\overline{\mathbf{x}}_{k-1}\|_2^2\sin^2(\theta_{k-1})\|\mathbf{y}_k\|_2^2.\\
\end{aligned}
$$
Then 
\begin{equation}\label{22-2}
\begin{aligned}
\mathbb{E}_{\tau_k}[\|\mathbf{y}_k\|_2^2|\mathcal{F}_{k-1}] &=\mathbb{E}_{\tau_k}[\|\mathbf{A}_{\tau_k}^T\mathbf{A}_{\tau_k}\overline{\mathbf{x}}_{k-1}\|_2^2|\mathcal{F}_{k-1}]\\
&= \mathbb{E}_{\tau_k}\left[ \left\| \sum_{i \in \tau_k} (\mathbf{A}^{(i)} \overline{\mathbf{x}}_{k-1}) (\mathbf{A}^{(i)})^T \right\|_2^2 \,\bigg|\, \mathcal{F}_{k-1} \right] \\
&= p_{ij}(i \in \tau_k, j \in \tau_k)\sum_{i=1}^m \sum_{j=1}^m (\mathbf{A}^{(i)} \overline{\mathbf{x}}_{k-1})(\mathbf{A}^{(j)} \overline{\mathbf{x}}_{k-1}) \mathbf{A}^{(i)} (\mathbf{A}^{(j)})^T  \\
&=p_{i}(i \in \tau_k)\sum_{i=1}^m(\mathbf{A}^{(i)} \overline{\mathbf{x}}_{k-1})^2\|\mathbf{A}^{(i)}\|_2^2 \\
&\quad+ p_{ij}(i,j \in \tau_k)\sum_{i\neq j}(\mathbf{A}^{(i)} \overline{\mathbf{x}}_{k-1})(\mathbf{A}^{(j)} \overline{\mathbf{x}}_{k-1}) \mathbf{A}^{(i)} (\mathbf{A}^{(j)})^T\\
&\leq l^2\|\mathbf{A}^T\mathbf{A}\overline{\mathbf{x}}_{k-1}\|_2^2+l(1-l)\lambda_{max}(\mathbf{A}^T\mathbf{A})\|\mathbf{A}\overline{\mathbf{x}}_{k-1}\|_2^2\\
&\leq l\lambda_{max}^2(\mathbf{A}^T\mathbf{A})\|\overline{\mathbf{x}}_{k-1}\|_2^2.
\end{aligned}
\end{equation}
From the above results, it follows that 
$$l^2\lambda_{min}^2(\mathbf{A}^T\mathbf{A})\|\overline{\mathbf{x}}_{k-1}\|_2^4\leq\|\overline{\mathbf{x}}_{k-1}\|_2^2\sin^2(\theta_{k-1})l\lambda_{max}^2(\mathbf{A}^T\mathbf{A})\|\overline{\mathbf{x}}_{k-1}\|_2^2.$$
If $\|\overline{\mathbf{x}}_{k-1}\|_2=0$, then $\mathbf{x}_{k-1}$ is an exact solution of Eq.\eqref{1-1}; otherwise,
$$\sin^2(\theta_{k-1})\geq\frac{l\lambda_{min}^2(\mathbf{A}^T\mathbf{A})}{\lambda_{max}^2(\mathbf{A}^T\mathbf{A})}>0.$$
This completes the proof.
\end{proof}}

From the above lemmas, we will establish the following results regarding the convergence rate of Algorithm \ref{algo-2-1}. To simplify the analysis, we let $\mathbf{W}=\mathbf{I}$.

\begin{theorem}\label{The-2}
{\rm Let $\mathbf{x}_{\star}=\mathbf{A}^{\dagger}\mathbf{b}$ be the least-norm solution of consistent linear systems \eqref{1-1}. Then the iterative sequence $\{
\mathbf{x}_{k}\}$ generated by Algorithm \ref{algo-2-1} converges to $\mathbf{x}_{\star }$ for any initial vector $\mathbf{x}_0\in \mathcal{R}(\mathbf{A}^T)$. Moreover, the corresponding error norm in expectation yields
}
\begin{equation}\label{2-2}
\mathbb{E}_{k}\|\mathbf{x}_{k}-\mathbf{x}_{\star}\|_2^2\leq\left(1-\frac{l\lambda_{min}^2(\mathbf{A}^T\mathbf{A})}{\sin^2(\theta_{k-1})\lambda_{max}^2(\mathbf{A}^T\mathbf{A})}\right)\|\mathbf{x}_{k-1}-\mathbf{x}_{\star}\|_2^2.
\end{equation}
\end{theorem}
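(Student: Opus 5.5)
The plan is to reduce the expected error decrease to a single projection identity and then bound its size with Lemma~\ref{lem-1}, Lemma~\ref{lem-4} and the moment estimates from the proof of Lemma~\ref{lem-5}. Throughout, take $\mathbf{W}=\mathbf{I}$ and set $\mathbf{y}_k=\mathbf{A}^T\mathbf{s}_k$ and $\mathbf{Q}_{k-1}=\mathbf{P}_{k-1}\mathbf{\Theta}_{k-1}^{-1}\mathbf{P}_{k-1}^T$.

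First I will establish the Pythagorean identity. By Lemma~\ref{lem-1} applied at step $k$, $\mathbf{p}_k^T\overline{\mathbf{x}}_k=0$. Since $\overline{\mathbf{x}}_{k-1}=\overline{\mathbf{x}}_k-\mathbf{p}_k$, this gives
$$\|\overline{\mathbf{x}}_k\|_2^2=\|\overline{\mathbf{x}}_{k-1}\|_2^2-\|\mathbf{p}_k\|_2^2 .$$
So it suffices to bound $\mathbb{E}_k\|\mathbf{p}_k\|_2^2$ from below by the stated multiple of $\|\overline{\mathbf{x}}_{k-1}\|_2^2$.

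Next I will compute $\|\mathbf{p}_k\|_2^2$ explicitly. We have $\mathbf{p}_k=\gamma_{k-1}(\mathbf{I}-\mathbf{Q}_{k-1})\mathbf{y}_k$, and the denominator of $\gamma_{k-1}$ is $\|\mathbf{y}_k\|_2^2-\|\mathbf{d}_{k-1}\|_2^2=\|(\mathbf{I}-\mathbf{Q}_{k-1})\mathbf{y}_k\|_2^2=\sin^2(\theta_{k-1})\|\mathbf{y}_k\|_2^2$. Consistency gives $\mathbf{r}_{k-1}=-\mathbf{A}\overline{\mathbf{x}}_{k-1}$, so the numerator is $\mathbf{s}_k^T\mathbf{r}_{k-1}=\|\mathbf{A}_{\tau_k}\overline{\mathbf{x}}_{k-1}\|_2^2$. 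Hence
$$\|\mathbf{p}_k\|_2^2=\frac{\|\mathbf{A}_{\tau_k}\overline{\mathbf{x}}_{k-1}\|_2^4}{\sin^2(\theta_{k-1})\|\mathbf{y}_k\|_2^2}.$$

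I will then take conditional expectation over $\tau_k$ and apply Lemma~\ref{lem-4} with $X=\|\mathbf{A}_{\tau_k}\overline{\mathbf{x}}_{k-1}\|_2^2$ and $Y=\sin^2(\theta_{k-1})\|\mathbf{y}_k\|_2^2$. This gives $\mathbb{E}_k\|\mathbf{p}_k\|_2^2\ge(\mathbb{E}_kX)^2/\mathbb{E}_kY$.
\begin{itemize}
\item For the numerator, Lemmas~\ref{lem-2} and~\ref{lem-3} give $\mathbb{E}_kX=l\|\mathbf{A}\overline{\mathbf{x}}_{k-1}\|_2^2$. Since $\mathbf{x}_0\in\mathcal{R}(\mathbf{A}^T)$ and every update lies in $\mathcal{R}(\mathbf{A}^T)$, we have $\overline{\mathbf{x}}_{k-1}\in\mathcal{R}(\mathbf{A}^T)$. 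Therefore $\|\mathbf{A}\overline{\mathbf{x}}_{k-1}\|_2^2\ge\lambda_{min}(\mathbf{A}^T\mathbf{A})\|\overline{\mathbf{x}}_{k-1}\|_2^2$.
\item For the denominator, I will use \eqref{22-2}, which gives $\mathbb{E}_k\|\mathbf{y}_k\|_2^2\le l\lambda_{max}^2(\mathbf{A}^T\mathbf{A})\|\overline{\mathbf{x}}_{k-1}\|_2^2$.
\end{itemize}
Combining the two bounds yields $\mathbb{E}_k\|\mathbf{p}_k\|_2^2\ge l\lambda_{min}^2\|\overline{\mathbf{x}}_{k-1}\|_2^2/(\sin^2\theta_{k-1}\lambda_{max}^2)$, which is \eqref{2-2}. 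Lemma~\ref{lem-5} then guarantees that the contraction factor is nonnegative.

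Convergence follows by iterating the bound with the law of iterated expectations, combined with the finite-termination corollary once $\mathbf{S}_k$ reaches rank $\operatorname{rank}(\mathbf{A})$.

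The main obstacle is that $\sin^2(\theta_{k-1})$ is itself random, because it depends on $\tau_k$, and it sits inside $Y$. To treat it as pulled out of the expectation, as the statement's form suggests, I would either interpret $\theta_{k-1}$ as a quantity fixed conditionally, or use $\sin^2\le1$ in the denominator and keep $\sin^2$ only formally. My first attempt will be to condition on $\theta_{k-1}$ and apply Lemma~\ref{lem-4} conditionally. The step from $(\mathbb{E}_kX)^2$ to $\lambda_{min}^2$ also needs care, since it relies on the range-space invariance of the iterates.
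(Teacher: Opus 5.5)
Your outline follows the paper's proof step for step: the Pythagorean identity, the explicit formula for $\|\mathbf{p}_k\|_2^2$, Lemma~\ref{lem-4}, then $\mathbb{E}_k X=l\|\mathbf{A}\overline{\mathbf{x}}_{k-1}\|_2^2$ and \eqref{22-2}. Two of your details are cleaner than the paper's. You derive $\|\overline{\mathbf{x}}_k\|_2^2=\|\overline{\mathbf{x}}_{k-1}\|_2^2-\|\mathbf{p}_k\|_2^2$ from Lemma~\ref{lem-1}, whereas the paper appeals to $(\mathbf{I}-P)^2=\mathbf{I}-P$ for a non-symmetric $P$, where idempotence alone does not give the identity. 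You also justify the $\lambda_{min}$ step via $\overline{\mathbf{x}}_{k-1}\in\mathcal{R}(\mathbf{A}^T)$.

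The genuine gap is the one you flag, and neither remedy you propose closes it. Since $\mathbf{y}_k=\mathbf{A}^T\mathbf{s}_k$ depends on $\tau_k$, so does $\theta_{k-1}$. The left side of \eqref{2-2} is fixed by the history, while the right side is random. The paper's proof simply moves $\sin^2(\theta_{k-1})$ outside $\mathbb{E}_{\tau_k}$ without justification, and you would be doing the same. Conditioning on $\theta_{k-1}$ does not rescue this: under the law of $\tau_k$ given $\theta_{k-1}$, the inclusion probabilities are in general no longer $l$. You then lose both $\mathbb{E}[X\mid\theta_{k-1}]=l\|\mathbf{A}\overline{\mathbf{x}}_{k-1}\|_2^2$ and \eqref{22-2}, and cannot reassemble the stated factor. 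Bounding $\sin^2\le 1$ is rigorous, but it only proves the factor $1-l\lambda_{min}^2(\mathbf{A}^T\mathbf{A})/\lambda_{max}^2(\mathbf{A}^T\mathbf{A})$, not \eqref{2-2}.

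What works is to apply Lemma~\ref{lem-4} with $Y=\|(\mathbf{I}-\mathbf{Q}_{k-1})\mathbf{y}_k\|_2^2$ itself, without splitting off the sine. The degenerate case is harmless: if $Y=0$ then $\mathbf{y}_k\in\mathcal{R}(\mathbf{P}_{k-1})$, so $X=-\mathbf{y}_k^T\overline{\mathbf{x}}_{k-1}=0$ by Lemma~\ref{lem-1}, and the convention $0/0:=0$ keeps the Cauchy--Schwarz argument valid. Then read $\sin^2(\theta_{k-1})$ as the history-measurable quantity
\[
\sin^2(\theta_{k-1}):=\frac{\mathbb{E}_k\|(\mathbf{I}-\mathbf{Q}_{k-1})\mathbf{y}_k\|_2^2}{\mathbb{E}_k\|\mathbf{y}_k\|_2^2}\in(0,1]\quad(\overline{\mathbf{x}}_{k-1}\neq\mathbf{0}),
\]
which is a $\|\mathbf{y}_k\|_2^2$-weighted average of the realized squared sines. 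The worst case $\max_{\tau_k}\sin^2(\theta_{k-1})$ also works, with a weaker constant. Then
\[
\mathbb{E}_kY=\sin^2(\theta_{k-1})\,\mathbb{E}_k\|\mathbf{y}_k\|_2^2\le\sin^2(\theta_{k-1})\,l\lambda_{max}^2(\mathbf{A}^T\mathbf{A})\|\overline{\mathbf{x}}_{k-1}\|_2^2,
\]
and your chain yields \eqref{2-2} verbatim. Nonnegativity of the factor is automatic from $\mathbb{E}_k\|\mathbf{p}_k\|_2^2\le\|\overline{\mathbf{x}}_{k-1}\|_2^2$, so you do not need Lemma~\ref{lem-5}, whose proof has the same measurability issue. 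Under either reading the factor is at most $1-l\lambda_{min}^2(\mathbf{A}^T\mathbf{A})/\lambda_{max}^2(\mathbf{A}^T\mathbf{A})<1$ uniformly in $k$. Convergence therefore follows from iterating and from the monotonicity of $\|\overline{\mathbf{x}}_k\|_2$. Do not lean on the finite-termination corollary: its full-column-rank hypothesis on $\mathbf{S}_k$ is not guaranteed by random sampling.
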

\begin{proof}
Define
$$\begin{aligned}\Delta _{k} &=(I-\mathbf{P}_{k-1}\mathbf{\Theta}_{k-1} ^{-1}\mathbf{P}_{k-1}^T)\mathbf{A}^T\mathbf{s}_k\mathbf{s}_k^T\mathbf{A},\\
\phi_k&=\|\mathbf{A}^T\mathbf{s}_k\|_2^2-\|\mathbf{d}_{k-1}\|_2^2.\end{aligned}$$
It follows from Algorithm \ref{algo-2-1} that
$$
\begin{aligned}
\mathbf{x}_{k}-\mathbf{x}_{\star}&=\mathbf{x}_{k-1}-\mathbf{x}_{\star}+\frac{\mathbf{s}_k^T(\mathbf{b}-\mathbf{A}\mathbf{x}_{k-1})-\mathbf{s}_k^T(\mathbf{b}-\mathbf{A}\mathbf{x}_{k-1})\mathbf{P}_{k-1}\mathbf{\Theta}_{k-1} ^{-1}\mathbf{P}_{k-1}^T}{\|\mathbf{y}_k\|_2^2-\|\mathbf{d}_{k-1}\|_2^2}\mathbf{A}^T\mathbf{s}_k\\
&=(\mathbf{I}-P)(\mathbf{x}_{k-1}-\mathbf{x}_{\star}),
\end{aligned}
$$
where $P=\frac{\mathbf{A}^T\mathbf{s}_k\mathbf{s}_k^T\mathbf{A}-\mathbf{P}_{k-1}\mathbf{\Theta}_{k-1} ^{-1}\mathbf{P}_{k-1}^T\mathbf{A}^T\mathbf{s}_k\mathbf{s}_k^T\mathbf{A}}{\|\mathbf{A}^T\mathbf{s}_k\|_2^2-\|\mathbf{d}_{k-1}\|_2^2}$. Obviously, $(\mathbf{I}-P)^2=\mathbf{I}-P$.
Therefore,
$$
\begin{aligned}
\|\mathbf{x}_{k}-\mathbf{x}_{\star}\|_2^2&=\|(\mathbf{I}-P)(\mathbf{x}_{k-1}-\mathbf{x}_{\star})\|_2^2\\
&=\|\mathbf{x}_{k-1}-\mathbf{x}_{\star}\|_2^2-\|P(\mathbf{x}_{k-1}-\mathbf{x}_{\star})\|_2^2\\
&=\|\mathbf{x}_{k-1}-\mathbf{x}_{\star}\|_2^2-\|\frac{(\mathbf{I}-\mathbf{P}_{k-1}\mathbf{\Theta}_{k-1} ^{-1}\mathbf{P}_{k-1}^T)\mathbf{A}^T\mathbf{s}_k\mathbf{s}_k^T\mathbf{A}}{\|\mathbf{A}^T\mathbf{s}_k\|_2^2-\|\mathbf{d}_{k-1}\|_2^2}(\mathbf{x}_{k-1}-\mathbf{x}_{\star})\|_2^2\\
&=\|\mathbf{x}_{k-1}-\mathbf{x}_{\star}\|_2^2-\frac{\|\Delta_k(\mathbf{x}_{k-1}-\mathbf{x}_{\star})\|_2^2}{\phi_k^2}\\
&=\|\mathbf{x}_{k-1}-\mathbf{x}_{\star}\|_2^2-\frac{(\|\mathbf{A}^T\mathbf{s}_k\|_2^2-\|\mathbf{d}_{k-1}\|_2^2)|\mathbf{s}_k^T\mathbf{A}(\mathbf{x}_{k-1}-\mathbf{x}_{\star})|^2}{\phi_k^2}\\
&=\|\mathbf{x}_{k-1}-\mathbf{x}_{\star}\|_2^2-\frac{|\mathbf{s}_k^T\mathbf{A}(\mathbf{x}_{k-1}-\mathbf{x}_{\star})|^2}{\phi_k}\\
&=\|\mathbf{x}_{k-1}-\mathbf{x}_{\star}\|_2^2-\frac{|\mathbf{s}_k^T(\mathbf{b}-\mathbf{A}\mathbf{x}_{k-1})|^2}{\phi_k}\\
&=\|\mathbf{x}_{k-1}-\mathbf{x}_{\star}\|_2^2-\frac{|\mathbf{s}_k^T(\mathbf{b}-\mathbf{A}\mathbf{x}_{k-1})|^2}{\sin^2(\theta_{k-1})\|\mathbf{A}^T\mathbf{s}_k\|_2^2}\\
&=\|\mathbf{x}_{k-1}-\mathbf{x}_{\star}\|_2^2-\frac{\|\mathbf{b}_{\tau_k}-\mathbf{A}_{\tau_k}\mathbf{x}_{k-1}\|_2^4}{\sin^2(\theta_{k-1})\|\mathbf{A}^T\mathbf{s}_k\|_2^2},\\
\end{aligned}
$$
where $\theta_{k-1}$ is similarly defined as  in Lemma \ref{lem-5}.
The result follows from taking the conditional expectation of block $\tau_k$  over the entire history  $\mathcal{F}_{k-1} = \{\tau_1,\cdots, \tau_{k-1}\}$, which yields
$$
\begin{aligned}
\mathbb{E}_{\tau_k}[\|\mathbf{x}_{k}-\mathbf{x}_{\star}\|_2^2|\mathcal{F}_{k-1}]&\leq \|\mathbf{x}_{k-1}-\mathbf{x}_{\star}\|_2^2-\mathbb{E}_{\tau_k}\left[\frac{\|\mathbf{b}_{\tau_k}-\mathbf{A}_{\tau_k}\mathbf{x}_{k-1}\|_2^4}{\sin^2(\theta_{k-1})\|\mathbf{A}^T\mathbf{s}_k\|_2^2}\Bigg| \mathcal{F}_{k-1}\right]\\
&\leq\|\mathbf{x}_{k-1}-\mathbf{x}_{\star}\|_2^2-\frac{\mathbb{E}_{\tau_k}[\|\mathbf{b}_{\tau_k}-\mathbf{A}_{\tau_k}\mathbf{x}_{k-1}\|_2^4|\mathcal{F}_{k-1}]}{\sin^2(\theta_{k-1})\mathbb{E}_{\tau_k}[\|\mathbf{A}^T\mathbf{s}_k\|_2^2|\mathcal{F}_{k-1}]}\\
&=\|\mathbf{x}_{k-1}-\mathbf{x}_{\star}\|_2^2-\frac{(\sum_{i=1}^m p_i|r^{(i)}_{k-1}|^2)^2}{\sin^2(\theta_{k-1})\mathbb{E}_{\tau_k}[\|\mathbf{A}^T\mathbf{s}_k\|_2^2|\mathcal{F}_{k-1}]}\\
&=\|\mathbf{x}_{k-1}-\mathbf{x}_{\star}\|_2^2-\frac{l^2\|\mathbf{A}(\mathbf{x}_{k-1}-\mathbf{x}_{\star})\|_2^4}{\sin^2(\theta_{k-1})\mathbb{E}_{\tau_k}[\|\mathbf{A}^T\mathbf{s}_k\|_2^2|\mathcal{F}_{k-1}]}\\
&\leq\|\mathbf{x}_{k-1}-\mathbf{x}_{\star}\|_2^2-\frac{l^2\|\mathbf{A}(\mathbf{x}_{k-1}-\mathbf{x}_{\star})\|_2^4}{\sin^2(\theta_{k-1})l\lambda_{max}^2(\mathbf{A}^T\mathbf{A})\|\mathbf{x}_{k-1}-\mathbf{x}_{\star}\|_2^2}\\
&\leq(1-\frac{l\lambda_{min}^2(\mathbf{A}^T\mathbf{A})}{\sin^2(\theta_{k-1})\lambda_{max}^2(\mathbf{A}^T\mathbf{A})})\|\mathbf{x}_{k-1}-\mathbf{x}_{\star}\|_2^2.
\end{aligned}
$$
Consequently, the assertion follows immediately by taking the expectation over the entire history.
\end{proof}


\subsection{RPLSS-GEK}\label{subsection-2-2}
When the linear system \eqref{1-1} is inconsistent, Needell \cite{Needell} showed that RK fails to converge to the least-norm least-squares solution. To solve this problem, Zouzias and Freris \cite{Zou} first proposed a randomized extended Kaczmarz (REK) method. More concisely, the REK solver iteratively updates two vectors $\mathbf{z}_k$ and $\mathbf{x}_k$ ($\mathbf{A}^*\mathbf{z}=0$ for $\mathbf{z}_k$ and $\mathbf{A}\mathbf{x}=\mathbf{b}-\mathbf{z}_k$ for $\mathbf{x}_k$) by
$$
\mathbf{z}_{k+1} =(I_m-\frac{\mathbf{A}_{(j_k)}\mathbf{A}_{(j_k)}^T}{\|\mathbf{A}_{(j_k)}\|_2^2})\mathbf{z}_{k},
$$
$$
\mathbf{x}_{k+1} =\mathbf{x}_k+\frac{{\mathbf{b}}^{(i_{k})}-\mathbf{z}_k^{(i_k)}-\mathbf{A}^{(i_{k})}\mathbf{x}_{k}} {\| \mathbf{A}^{(i_{k})}\|_{2} ^{2} }(\mathbf{A}^{(i_{k})})^{T}.
$$
To further improve its convergence, a block version of REK has been proposed in \cite{Needell3}. Moreover, Du et al. devised a novel randomized extended average block Kaczmarz method \cite{KDu} for finding the least-squares solution of Eq.\eqref{1-1} without involving the pseudo-inverse. Ke et al. \cite{Ke} derived a greedy block extended Kaczmarz (GBEK) method with the average block projection technique. Besides, Bai and Wu \cite{Bai2} further extended the greedy randomized Kaczmarz framework to inconsistent systems.

From these randomized extended Kaczmarz solvers, we see that double‑sequence iteration is capable of circumventing the convergence failure of single‑sequence iteration on large‑scale inconsistent linear systems.
Moreover, when the linear system \eqref{1-1} is inconsistent, PLSS also fails to converge to the least-squares solution. To address this drawback, we develop an extended version of the RPLSS Gaussian Kaczmarz method, which addresses the slow convergence issue of conventional extended Kaczmarz methods. Specifically, referencing the double sequence iteration of REK, we maintain two sequences: one for $\mathbf{z}_k$ that drives $\mathbf{A}^T\mathbf{z}$ toward zero, and one for $\mathbf{x}_k$ that solves $\mathbf{A}\mathbf{x} = \mathbf{b} - \mathbf{z}_k$.

In the RPLSS framework, the update for $\mathbf{z}_k$ is obtained by solving the projected systems
\begin{equation}\label{2-3}
\bar{\mathbf{S}}_k^T\mathbf{A}^T\mathbf{z}_k=0  ,
\end{equation}
where $\bar{\mathbf{S}}_k$ is a sketch matrix. This yields the update $\mathbf{z}_k=\mathbf{z}_{k-1}-\bar{\mathbf{p}}_k$ with $$\bar{\mathbf{p}}_k=\bar{\gamma}_{k-1}(\mathbf{A}\bar{\mathbf{s}}_k-\bar{\mathbf{P}}_{k-1}\bar{\mathbf{\Theta}}_{k-1} ^{-1}\bar{\mathbf{P}}_{k-1}^T\mathbf{A}\bar{\mathbf{s}}_k)$$
   where $\bar{\gamma}_{k-1}=\frac{\bar{\mathbf{s}}_k^T\mathbf{A}^T\mathbf{z}_{k-1}}{\|\mathbf{A}\bar{\mathbf{s}}_k\|_2^2-\|\bar{\mathbf{d}}_{k-1}\|_2^2}$ with $\bar{\mathbf{d}}_{k-1}=\bar{\mathbf{\Theta}}_{k-1} ^{-1/2}\bar{\mathbf{P}}_{k-1}^T\mathbf{A}\bar{\mathbf{s}}_k$. The detail of $\bar{\mathbf{p}}_k$ is provided in Appendix \ref{App-1}. Similarly, the update for $\mathbf{x}_k$ follows the RPLSS‑GK solver, except that the right side is replaced with $\mathbf{b} - \mathbf{z}_k$ and the history of previous updates is no longer used, i.e., $\mathbf{P}_{k-1}=\mathbf{0}$.
   
From what was said above, we now present the RPLSS Gaussian extended Kaczmarz (RPLSS-GEK) method as follows.
\begin{algorithm}
	\caption{RPLSS-GEK}
	\label{algo-2-2}
	\begin{algorithmic}[1]
        \REQUIRE $\mathbf{A},\mathbf{b},\mathbf{z}_0= \mathbf{b}$\\
        \ENSURE $\mathbf{x}_{k}$\\
        \STATE
       For $k= 1, 2,\cdots $, until convergence, do:\\
                \STATE
Generate an indicator set $\tilde{{\tau}}_k$, i.e., choosing $l n$ columns of $\mathbf{A}$ by using the simple random sampling, where $0<l<1$
 \STATE
   Compute 
   $$\bar{\mathbf{s}}_k=\sum_{j\in \tilde{{\tau}}_k}\mathbf{A}_{(j)}^T\mathbf{z}_{k-1}\mathbf{e}_j$$
   Compute 
   $$\bar{\mathbf{p}}_k=\bar{\gamma}_{k-1}(\mathbf{A}\bar{\mathbf{s}}_k-\bar{\mathbf{P}}_{k-1}\bar{\mathbf{\Theta}}_{k-1} ^{-1}\bar{\mathbf{P}}_{k-1}^T\mathbf{A}\bar{\mathbf{s}}_k)$$
   where $\bar{\gamma}_{k-1}=\frac{\bar{\mathbf{s}}_k^T\mathbf{A}^T\mathbf{z}_{k-1}}{\|\mathbf{A}\bar{\mathbf{s}}_k\|_2^2-\|\bar{\mathbf{d}}_{k-1}\|_2^2}$ with $\bar{\mathbf{d}}_{k-1}=\bar{\mathbf{\Theta}}_{k-1} ^{-1/2}\bar{\mathbf{P}}_{k-1}^T\mathbf{A}\bar{\mathbf{s}}_k$
            \STATE
            Set $\mathbf{z}_{k} =\mathbf{z}_{k-1}-\bar{\mathbf{p}}_k$.

         \STATE
Generate an indicator set $\tau_{k}$, i.e., choosing $l m$ rows of $\mathbf{A}$ by using the simple random sampling, where $0<l<1$
 \STATE
   Compute 
   $$\mathbf{s}_k=\sum_{i\in \tau_k}\mathbf{r}^{(i)}_{k-1}\mathbf{e}_i,$$
      where $\mathbf{r}_{k-1}=\mathbf{b}-\mathbf{z}_{k-1}-\mathbf{A}\mathbf{x}_{k-1}$.
    \STATE
   Compute 
   $$\mathbf{p}_k=\gamma_{k-1}\mathbf{A}^T\mathbf{s}_k$$
   where $\gamma_{k-1}=\frac{\mathbf{s}_k^T(\mathbf{b}-\mathbf{z}_{k-1}-\mathbf{A}\mathbf{x}_{k-1})}{\|\mathbf{A}^T\mathbf{s}_k\|_2^2}$. 
            \STATE
            Set $\mathbf{x}_{k} =\mathbf{x}_{k-1}+\mathbf{p}_k$.

	\end{algorithmic}  
\end{algorithm}

An important result for guaranteeing the convergence of Algorithm \ref{algo-2-2}  is stated in the following theorem.
\begin{theorem}\label{The-4}
{\rm The iterative sequence $\{\mathbf{z}_{k}\}$ generated by Algorithm \ref{algo-2-2} converges to $\mathbf{b}_{\mathcal{R}(\mathbf{A})^{\bot}}$. Moreover, the corresponding error norm in expectation yields }
\end{theorem}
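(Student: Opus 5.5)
The plan is to mirror the proof of Theorem \ref{The-2}, applied to the $\mathbf{z}$-sequence: it is exactly the RPLSS-GK recursion for the consistent homogeneous system $\mathbf{A}^T\mathbf{z}=\mathbf{0}$ (matrix $\mathbf{A}^T$, right-hand side $\mathbf{0}$, $\mathbf{W}=\mathbf{I}$). In that problem the residual is $\mathbf{0}-\mathbf{A}^T\mathbf{z}_{k-1}$, so $\bar{\mathbf{s}}_k=\sum_{j\in\tilde{\tau}_k}\mathbf{A}_{(j)}^T\mathbf{z}_{k-1}\mathbf{e}_j$ is, up to sign, the sampled residual, just as in Algorithm \ref{algo-2-1}.

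First, identify the target. Since $\mathbf{z}_0=\mathbf{b}$ and every update $\bar{\mathbf{p}}_k$ is a combination of $\mathbf{A}\bar{\mathbf{s}}_k$ and earlier $\bar{\mathbf{p}}_i$, all increments lie in $\mathcal{R}(\mathbf{A})$. Hence $\mathbf{z}_k-\mathbf{b}\in\mathcal{R}(\mathbf{A})$ for all $k$. The unique point of $\mathbf{b}+\mathcal{R}(\mathbf{A})$ lying in $\mathcal{N}(\mathbf{A}^T)=\mathcal{R}(\mathbf{A})^{\bot}$ is $\mathbf{z}_\star:=\mathbf{b}_{\mathcal{R}(\mathbf{A})^{\bot}}$, and this is the natural fixed point. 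Set $\overline{\mathbf{z}}_k=\mathbf{z}_k-\mathbf{z}_\star\in\mathcal{R}(\mathbf{A})$ and note that $\mathbf{A}^T\mathbf{z}_{k-1}=\mathbf{A}^T\overline{\mathbf{z}}_{k-1}$.

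Second, redo Lemma \ref{lem-1} for this sequence, with $\mathbf{A}$ replaced by $\mathbf{A}^T$, to get $\bar{\mathbf{P}}_k^T\overline{\mathbf{z}}_k=\mathbf{0}$. The computation of the proof of Theorem \ref{The-2} then carries over verbatim. It gives $\overline{\mathbf{z}}_k=(\mathbf{I}-\bar P)\overline{\mathbf{z}}_{k-1}$ with $\bar P$ an orthogonal projector, and
\[
\|\overline{\mathbf{z}}_k\|_2^2=\|\overline{\mathbf{z}}_{k-1}\|_2^2-\frac{\|\mathbf{A}_{:,\tilde{\tau}_k}^T\mathbf{z}_{k-1}\|_2^4}{\sin^2(\bar\theta_{k-1})\|\mathbf{A}\bar{\mathbf{s}}_k\|_2^2},
\]
where $\bar\theta_{k-1}$ is the angle between $\mathbf{A}\bar{\mathbf{s}}_k$ and $\mathcal{R}(\bar{\mathbf{P}}_{k-1})$.

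Third, take the conditional expectation over $\tilde\tau_k$. Lemma \ref{lem-4} splits the expectation of the ratio into a ratio of expectations. Lemmas \ref{lem-2}--\ref{lem-3}, with uniform inclusion probability $l$, give numerator $l^2\|\mathbf{A}^T\overline{\mathbf{z}}_{k-1}\|_2^4$. The column analogue of \eqref{22-2} bounds the denominator by $l\lambda_{max}^2(\mathbf{A}\mathbf{A}^T)\|\overline{\mathbf{z}}_{k-1}\|_2^2$. Because $\overline{\mathbf{z}}_{k-1}\in\mathcal{R}(\mathbf{A})$, we have $\|\mathbf{A}^T\overline{\mathbf{z}}_{k-1}\|_2^2\ge\lambda_{min}(\mathbf{A}^T\mathbf{A})\|\overline{\mathbf{z}}_{k-1}\|_2^2$, using the nonzero minimal eigenvalue, which is shared by $\mathbf{A}\mathbf{A}^T$ and $\mathbf{A}^T\mathbf{A}$. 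This yields
\[
\mathbb{E}_k\|\mathbf{z}_k-\mathbf{b}_{\mathcal{R}(\mathbf{A})^{\bot}}\|_2^2\le\Bigl(1-\frac{l\lambda_{min}^2(\mathbf{A}^T\mathbf{A})}{\sin^2(\bar\theta_{k-1})\lambda_{max}^2(\mathbf{A}^T\mathbf{A})}\Bigr)\|\mathbf{z}_{k-1}-\mathbf{b}_{\mathcal{R}(\mathbf{A})^{\bot}}\|_2^2.
\]
The analogue of Lemma \ref{lem-5} shows this factor lies in $[0,1)$. Iterating it with the tower property gives convergence in expectation, and finite termination also follows from the full-rank corollary.

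I expect the main obstacle to be the Lemma \ref{lem-1}-type orthogonality $\bar{\mathbf{P}}_{k-1}^T\overline{\mathbf{z}}_{k-1}=\mathbf{0}$. It is needed to rewrite the decrement using $\mathbf{A}^T\overline{\mathbf{z}}_{k-1}$ and to control the angle term. It requires checking that the recursion really is the projection \eqref{2-3} onto the accumulated sketch, with the correct sign convention ($\mathbf{z}_k=\mathbf{z}_{k-1}-\bar{\mathbf{p}}_k$). A second delicate point is the denominator bound: the cross terms $i\neq j$ in the sampling expectation must be handled as in \eqref{22-2}, now for columns.
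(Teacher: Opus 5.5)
Your proposal follows the paper's proof of Theorem~\ref{The-4} essentially step for step: the one-step projection identity for $\mathbf{z}_k-\mathbf{b}_{\mathcal{R}(\mathbf{A})^{\bot}}$, Lemma~\ref{lem-4} to pass to a ratio of expectations, and the sampling moments for numerator and denominator. It is in fact more careful, because you explicitly verify two facts the paper uses only tacitly: that $\mathbf{z}_k-\mathbf{b}_{\mathcal{R}(\mathbf{A})^{\bot}}\in\mathcal{R}(\mathbf{A})$, which the nonzero-$\lambda_{min}$ bound needs, and the Lemma~\ref{lem-1}-type orthogonality $\bar{\mathbf{P}}_{k-1}^T(\mathbf{z}_{k-1}-\mathbf{b}_{\mathcal{R}(\mathbf{A})^{\bot}})=\mathbf{0}$, which is what makes the merely idempotent (not symmetric) $P$ act as an orthogonal projection in the Pythagorean step. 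The one weak point you share with the paper is treating the $\sin^2$ of the angle as fixed under $\mathbb{E}_{\tilde{\tau}_k}$, although it depends on $\tilde{\tau}_k$; bounding it by $1$ inside the expectation before applying Lemma~\ref{lem-4} makes that step rigorous, at the cost of a weaker rate.
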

\begin{equation}\label{2-4}
\mathbb{E}\|\mathbf{z}_k-\mathbf{b}_{\mathcal{R}(\mathbf{A})^{\bot}}\|_2^2\leq\hat{\beta} ^k\|\mathbf{b}_{\mathcal{R}(\mathbf{A})}\|_2^2,   
\end{equation}
where $\hat{\beta}=\sup\limits_k\left(1-\frac{l\lambda_{min}^2(\mathbf{A}\mathbf{A}^T)}{\sin^2(\tilde{\theta}_{k-1})\lambda_{max}^2(\mathbf{A}\mathbf{A}^T)}\right)$.
\begin{proof}
Let $P=\frac{\mathbf{A}\bar{\mathbf{s}}_k\bar{\mathbf{s}}_k^T\mathbf{A}^T-\bar{\mathbf{P}}_{k-1}\bar{\mathbf{\Theta}}_{k-1} ^{-1}\bar{\mathbf{P}}_{k-1}^T\mathbf{A}\bar{\mathbf{s}}_k\bar{\mathbf{s}}_k^T\mathbf{A}^T}{\|\mathbf{A}\bar{\mathbf{s}}_k\|_2^2-\|\bar{\mathbf{d}}_{k-1}\|_2^2}$. Trivially, $(\mathbf{I}-P)^2=\mathbf{I}-P$. Define $\overline{\mathbf{e}}_k:=\mathbf{z}_k-\mathbf{b}_{\mathcal{R}(\mathbf{A})^{\bot}}$ for every $k \geq 0$. From Algorithm \ref{algo-2-2} and $(\mathbf{I}-P)\mathbf{b}_{\mathcal{R}(A)^{\bot}}=\mathbf{b}_{\mathcal{R}(A)^{\bot}}$, it can be stated that
\begin{equation}\label{2-5}
\begin{aligned}
\overline{\mathbf{e}}_k&=\overline{\mathbf{e}}_{k-1}-\frac{\bar{\mathbf{s}}_k^T\mathbf{A}^T\mathbf{z}_{k-1}-\bar{\mathbf{s}}_k^T\mathbf{A}^T\mathbf{z}_{k-1}\bar{\mathbf{P}}_{k-1}\bar{\mathbf{\Theta}}_{k-1} ^{-1}\bar{\mathbf{P}}_{k-1}^T}{\|\mathbf{A}\bar{\mathbf{s}}_k\|_2^2-\|\bar{\mathbf{d}}_{k-1}\|_2^2}\mathbf{A}\bar{\mathbf{s}}_k\\
&=(\mathbf{I}-P)\overline{\mathbf{e}}_{k-1}.
\end{aligned}
\end{equation}
 It follows from Eq.\eqref{2-5} that
$$
\begin{aligned}
\|\overline{\mathbf{e}}_k\|_2^2&=\|(\mathbf{I}-P)\overline{\mathbf{e}}_{k-1}\|_2^2\\
&=\|\overline{\mathbf{e}}_{k-1}\|_2^2-\left\|\frac{(\mathbf{I}-\bar{\mathbf{P}}_{k-1}\bar{\mathbf{\Theta}}_{k-1} ^{-1}\bar{\mathbf{P}}_{k-1}^T)\mathbf{A}\bar{\mathbf{s}}_k\bar{\mathbf{s}}_k^T\mathbf{A}^T}{\|\mathbf{A}\bar{\mathbf{s}}_k\|_2^2-\|\bar{\mathbf{d}}_{k-1}\|_2^2}\overline{\mathbf{e}}_{k-1}\right\|_2^2\\
&=\|\overline{\mathbf{e}}_{k-1}\|_2^2-\frac{\|(\mathbf{I}-\bar{\mathbf{P}}_{k-1}\bar{\mathbf{\Theta}}_{k-1} ^{-1}\bar{\mathbf{P}}_{k-1}^T)\mathbf{A}\bar{\mathbf{s}}_k\bar{\mathbf{s}}_k^T\mathbf{A}^T\overline{\mathbf{e}}_{k-1}\|_2^2}{(\|\mathbf{A}\bar{\mathbf{s}}_k\|_2^2-\|\bar{\mathbf{d}}_{k-1}\|_2^2)^2}\\
&=\|\overline{\mathbf{e}}_{k-1}\|_2^2-\frac{(\|\mathbf{A}\bar{\mathbf{s}}_k\|_2^2-\|\bar{\mathbf{d}}_{k-1}\|_2^2)|\bar{\mathbf{s}}_k^T\mathbf{A}^T\overline{\mathbf{e}}_{k-1}|^2}{(\|\mathbf{A}\bar{\mathbf{s}}_k\|_2^2-\|\bar{\mathbf{d}}_{k-1}\|_2^2)^2}\\
&=\|\overline{\mathbf{e}}_{k-1}\|_2^2-\frac{|\bar{\mathbf{s}}_k^T\mathbf{A}^T\overline{\mathbf{e}}_{k-1}|^2}{\|\mathbf{A}\bar{\mathbf{s}}_k\|_2^2-\|\bar{\mathbf{d}}_{k-1}\|_2^2}\\
&=\|\overline{\mathbf{e}}_{k-1}\|_2^2-\frac{|\bar{\mathbf{s}}_k^T\mathbf{A}^T\overline{\mathbf{e}}_{k-1}|^2}{\sin^2(\tilde{\theta}_{k-1})\|\mathbf{A}\bar{\mathbf{s}}_k\|_2^2},
\end{aligned}
$$
where $\tilde{\theta}_{k-1}$ is the angle between the vector $\mathbf{A}\bar{\mathbf{s}}_k$ and the column space of $\bar{\mathbf{P}}_{k-1}$. The result follows from taking the conditional expectation of block $\tilde{{\tau}}_k$ over the entire history $\mathcal{F}_{k-1} = \{\chi_1,\cdots, \chi_{k-1}\}$, which yields
$$
\begin{aligned}
\mathbb{E}_{\tilde{{\tau}}_k}[\|\overline{\mathbf{e}}_k\|_2^2|\mathcal{F}_{k-1}]&\leq\|\overline{\mathbf{e}}_{k-1}\|_2^2-\mathbb{E}_{\tilde{{\tau}}_k}\left[\frac{|\bar{\mathbf{s}}_k^T\mathbf{A}^T\overline{\mathbf{e}}_{k-1}|^2}{\sin^2(\tilde{\theta}_{k-1})\|\mathbf{A}\bar{\mathbf{s}}_k\|_2^2}\Bigg|\mathcal{F}_{k-1}\right]\\
&\leq\|\overline{\mathbf{e}}_{k-1}\|_2^2-\frac{\mathbb{E}_{\tilde{{\tau}}_k}[|\bar{\mathbf{s}}_k^T\mathbf{A}^T\overline{\mathbf{e}}_{k-1}|^2|\mathcal{F}_{k-1}]}{\sin^2(\tilde{\theta}_{k-1})\mathbb{E}_{\tilde{{\tau}}_k}[\|\mathbf{A}\bar{\mathbf{s}}_k\|_2^2|\mathcal{F}_{k-1}]}\\
&\leq\|\overline{\mathbf{e}}_{k-1}\|_2^2-\frac{l^2\|\mathbf{A}^T\overline{\mathbf{e}}_{k-1}\|_2^4}{\sin^2(\tilde{\theta}_{k-1})l\lambda_{max}^2(\mathbf{A}\mathbf{A}^T)\|\overline{\mathbf{e}}_{k-1}\|_2^2}\\
&\leq(1-\frac{l\lambda_{min}^2(\mathbf{A}\mathbf{A}^T)}{\sin^2(\tilde{\theta}_{k-1})\lambda_{max}^2(\mathbf{A}\mathbf{A}^T)})\|\overline{\mathbf{e}}_{k-1}\|_2^2.
\end{aligned}
$$
This completes the proof.
\end{proof}

From Theorem \ref{The-4}, we give the following theorem for guaranteeing the convergence of Algorithm \ref{algo-2-2}.
\begin{theorem}\label{The-5}
{\rm Let $\mathbf{x}_{\star}=\mathbf{A}^{\dagger}\mathbf{b}$ be the least-norm least-squares solution of the inconsistent linear system \eqref{1-1}. Then the iterative sequence $\{\mathbf{x}_{k}\}$ generated by Algorithm \ref{algo-2-2} converges to $\mathbf{x}_{\star }$ for any initial vector $\mathbf{x}_0$ in the column space of $\mathbf{A}^T$. Moreover, the corresponding error norm in expectation yields
}
\begin{equation}\label{3-21}
\mathbb{E}\|\mathbf{x}_{k}-\mathbf{x}_{\star}\|_2^2\leq\left(\hat{\alpha}^k+\hat{\beta}^{k-1}\frac{\lambda_{max}(\mathbf{A}^T\mathbf{A})}{\lambda_{min}(\mathbf{A}_{\tau_k}^T\mathbf{A}_{\tau_k})}\right)\|\mathbf{x}_{\star}\|_2^2,    
\end{equation}
\rm where $\hat{\alpha}=\max\limits_k\frac{\lambda_{min}(\mathbf{A}_{\tau_k}^T\mathbf{A}_{\tau_k})}{\lambda_{max}(\mathbf{A}_{\tau_k}\mathbf{A}_{\tau_k}^T)}$.
\end{theorem}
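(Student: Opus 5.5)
The plan follows the standard REK-type argument: split the $\mathbf{x}$-update into the exact projection for the consistent system $\mathbf{A}\mathbf{x}=\mathbf{b}_{\mathcal{R}(\mathbf{A})}$ plus a perturbation caused by $\mathbf{z}_{k-1}\neq\mathbf{b}_{\mathcal{R}(\mathbf{A})^{\bot}}$, and control the perturbation with Theorem \ref{The-4}.

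\textbf{Step 1: error recursion.} Since $\mathbf{A}\mathbf{x}_\star=\mathbf{b}_{\mathcal{R}(\mathbf{A})}=\mathbf{b}-\mathbf{b}_{\mathcal{R}(\mathbf{A})^{\bot}}$, write
\[
\mathbf{r}_{k-1}=\mathbf{b}-\mathbf{z}_{k-1}-\mathbf{A}\mathbf{x}_{k-1}=-\mathbf{A}\overline{\mathbf{x}}_{k-1}-\overline{\mathbf{e}}_{k-1},
\]
where $\overline{\mathbf{x}}_{k-1}=\mathbf{x}_{k-1}-\mathbf{x}_\star$ and $\overline{\mathbf{e}}_{k-1}=\mathbf{z}_{k-1}-\mathbf{b}_{\mathcal{R}(\mathbf{A})^{\bot}}$. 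The step $\mathbf{p}_k=\gamma_{k-1}\mathbf{A}^T\mathbf{s}_k$ is a one-dimensional projection along $\mathbf{y}_k=\mathbf{A}^T\mathbf{s}_k$. Setting $\mathbf{Q}_k=\mathbf{y}_k\mathbf{y}_k^T/\|\mathbf{y}_k\|_2^2$, this gives
\[
\overline{\mathbf{x}}_k=(\mathbf{I}-\mathbf{Q}_k)\overline{\mathbf{x}}_{k-1}-\frac{\mathbf{y}_k\mathbf{s}_k^T\overline{\mathbf{e}}_{k-1}}{\|\mathbf{y}_k\|_2^2}.
\]
The first term lies in $\mathbf{y}_k^\perp$ and the second is parallel to $\mathbf{y}_k$, so Pythagoras gives
\[
\|\overline{\mathbf{x}}_k\|_2^2=\|(\mathbf{I}-\mathbf{Q}_k)\overline{\mathbf{x}}_{k-1}\|_2^2+\frac{|\mathbf{s}_k^T\overline{\mathbf{e}}_{k-1}|^2}{\|\mathbf{y}_k\|_2^2}.
\]
An induction shows $\mathbf{x}_k\in\mathcal{R}(\mathbf{A}^T)$ whenever $\mathbf{x}_0\in\mathcal{R}(\mathbf{A}^T)$, so $\overline{\mathbf{x}}_k\in\mathcal{R}(\mathbf{A}^T)$ and the $\lambda_{min}$ bounds apply. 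One caveat: $\mathbf{s}_k$ itself depends on $\overline{\mathbf{e}}_{k-1}$. I would still treat $\mathbf{y}_k$ as a sampled direction built from rows in $\tau_k$, so that $\mathbf{y}_k\in\mathcal{R}(\mathbf{A}_{\tau_k}^T)$.

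\textbf{Step 2: bound the two terms separately.}
\begin{itemize}
\item \emph{Contraction term.} Repeat the argument of Theorem \ref{The-2} with $\mathbf{P}_{k-1}=\mathbf{0}$ (so $\sin\theta=1$), using Lemmas \ref{lem-2}, \ref{lem-3} and \ref{lem-4}. This gives $\mathbb{E}_k\|(\mathbf{I}-\mathbf{Q}_k)\overline{\mathbf{x}}_{k-1}\|_2^2\le\hat{\alpha}\|\overline{\mathbf{x}}_{k-1}\|_2^2$ with a contraction factor expressed through the spectrum of $\mathbf{A}_{\tau_k}$, which is how $\hat{\alpha}$ is defined.
\item \emph{Noise term.} Since $\mathbf{s}_k$ is supported on $\tau_k$, $\|\mathbf{y}_k\|_2^2=\|\mathbf{A}_{\tau_k}^T(\mathbf{s}_k)_{\tau_k}\|_2^2\ge\lambda_{min}(\mathbf{A}_{\tau_k}\mathbf{A}_{\tau_k}^T)\|\mathbf{s}_k\|_2^2$, where this eigenvalue is taken in the relevant nonzero sense and written as $\lambda_{min}(\mathbf{A}_{\tau_k}^T\mathbf{A}_{\tau_k})$ to match the statement. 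Cauchy--Schwarz then gives
\[
\frac{|\mathbf{s}_k^T\overline{\mathbf{e}}_{k-1}|^2}{\|\mathbf{y}_k\|_2^2}\le\frac{\|\overline{\mathbf{e}}_{k-1}\|_2^2}{\lambda_{min}(\mathbf{A}_{\tau_k}^T\mathbf{A}_{\tau_k})}.
\]
\end{itemize}

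\textbf{Step 3: unroll.} Take full expectations and insert Theorem \ref{The-4}, i.e.\ $\mathbb{E}\|\overline{\mathbf{e}}_{k-1}\|_2^2\le\hat{\beta}^{k-1}\|\mathbf{b}_{\mathcal{R}(\mathbf{A})}\|_2^2$. Then use $\|\mathbf{b}_{\mathcal{R}(\mathbf{A})}\|_2^2=\|\mathbf{A}\mathbf{x}_\star\|_2^2\le\lambda_{max}(\mathbf{A}^T\mathbf{A})\|\mathbf{x}_\star\|_2^2$. For the initial error, take $\mathbf{x}_0=\mathbf{0}$ so that $\|\overline{\mathbf{x}}_0\|_2=\|\mathbf{x}_\star\|_2$; for general $\mathbf{x}_0\in\mathcal{R}(\mathbf{A}^T)$ the first term carries $\|\mathbf{x}_0-\mathbf{x}_\star\|_2^2$ instead of $\|\mathbf{x}_\star\|_2^2$, and this is what I would establish. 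Unrolling the one-step inequality gives
\[
\mathbb{E}\|\overline{\mathbf{x}}_k\|_2^2\le\hat{\alpha}^k\|\overline{\mathbf{x}}_0\|_2^2+\frac{\lambda_{max}(\mathbf{A}^T\mathbf{A})}{\lambda_{min}(\mathbf{A}_{\tau_k}^T\mathbf{A}_{\tau_k})}\sum_{j}\hat{\alpha}^{k-j}\hat{\beta}^{j-1}\|\mathbf{x}_\star\|_2^2 .
\]
To reach the single term $\hat{\beta}^{k-1}$ in the statement, bound the geometric sum by its dominant term, or split the iteration count in half as in Zouzias--Freris.

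\textbf{Main obstacle.} Collapsing the convolution sum into the single term $\hat{\beta}^{k-1}$ with no extra constant does not follow in general. A sum of $k$ products gives a factor of order $k\max(\hat{\alpha},\hat{\beta})^{k-1}$, or $1/|\hat{\alpha}-\hat{\beta}|$ when the rates differ. I would first try the halving argument: run the $\mathbf{z}$-sequence for $\lfloor k/2\rfloor$ steps so that $\|\overline{\mathbf{e}}\|_2$ is already small, then contract $\mathbf{x}$. If that still leaves a constant factor, the constant has to be absorbed into the stated coefficient. A secondary difficulty is making the $\lambda_{min}$ bound for the noise term rigorous when $\tau_k$ is random: the ratio $1/\lambda_{min}(\mathbf{A}_{\tau_k}^T\mathbf{A}_{\tau_k})$ must be bounded uniformly over admissible $\tau_k$, and I would assume this as part of the setup.
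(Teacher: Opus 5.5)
The gap is in the contraction bullet of Step~2. In Algorithm~\ref{algo-2-2} the sketch $\mathbf{s}_k$ is the $\tau_k$-restriction of the \emph{perturbed} residual $\mathbf{r}_{k-1}=-\mathbf{A}\overline{\mathbf{x}}_{k-1}-\overline{\mathbf{e}}_{k-1}$. Hence $\mathbf{y}_k^T\overline{\mathbf{x}}_{k-1}=-\|\mathbf{A}_{\tau_k}\overline{\mathbf{x}}_{k-1}\|_2^2-(\overline{\mathbf{e}}_{k-1})_{\tau_k}^T\mathbf{A}_{\tau_k}\overline{\mathbf{x}}_{k-1}$, and the cross term can cancel the main term. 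Whenever $(\mathbf{r}_{k-1})_{\tau_k}\perp\mathbf{A}_{\tau_k}\overline{\mathbf{x}}_{k-1}$, you get $\mathbf{Q}_k\overline{\mathbf{x}}_{k-1}=\mathbf{0}$, and the first Pythagorean piece does not shrink at all. The argument of Theorem~\ref{The-2} does not transfer: it rests on $\mathbf{s}_k$ being the restriction of $-\mathbf{A}\overline{\mathbf{x}}_{k-1}$ itself, so that $\mathbf{s}_k^T\mathbf{A}\overline{\mathbf{x}}_{k-1}=-\|\mathbf{A}_{\tau_k}\overline{\mathbf{x}}_{k-1}\|_2^2$. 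It would also give a factor in $l$ and $\lambda(\mathbf{A}^T\mathbf{A})$, not in the spectrum of $\mathbf{A}_{\tau_k}$. Note too that the statement's $\hat{\alpha}$ is the eigenvalue ratio itself, whereas a contraction factor has to be one minus that ratio, as in the paper's proof. So no bound $\|(\mathbf{I}-\mathbf{Q}_k)\overline{\mathbf{x}}_{k-1}\|_2^2\le\hat{\alpha}\|\overline{\mathbf{x}}_{k-1}\|_2^2$ with $\hat{\alpha}<1$ holds independently of $\overline{\mathbf{e}}_{k-1}$. Separating the two pieces throws away exactly the coupling that pays for the lost contraction.

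The missing idea, which the paper's proof uses, is to keep the pieces together and exploit $\mathbf{s}_k^T\mathbf{r}_{k-1}=\|\mathbf{s}_k\|_2^2$. Set $\mathbf{u}=-\mathbf{A}\overline{\mathbf{x}}_{k-1}$ and $\mathbf{v}=-\overline{\mathbf{e}}_{k-1}$. Your identity then reads $\|\overline{\mathbf{x}}_k\|_2^2=\|\overline{\mathbf{x}}_{k-1}\|_2^2-[(\mathbf{s}_k^T\mathbf{u})^2-(\mathbf{s}_k^T\mathbf{v})^2]/\|\mathbf{A}^T\mathbf{s}_k\|_2^2$. Factor the difference of squares using $\mathbf{s}_k^T(\mathbf{u}+\mathbf{v})=\|\mathbf{s}_k\|_2^2$ and $\mathbf{s}_k^T(\mathbf{u}-\mathbf{v})=\|\mathbf{u}_{\tau_k}\|_2^2-\|\mathbf{v}_{\tau_k}\|_2^2$. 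This gives the exact relation
\[
\|\overline{\mathbf{x}}_k\|_2^2=\|\overline{\mathbf{x}}_{k-1}\|_2^2-\frac{\|\mathbf{s}_k\|_2^2}{\|\mathbf{A}^T\mathbf{s}_k\|_2^2}\Bigl(\|\mathbf{A}_{\tau_k}\overline{\mathbf{x}}_{k-1}\|_2^2-\|(\overline{\mathbf{e}}_{k-1})_{\tau_k}\|_2^2\Bigr).
\]
Bound the ratio below by $1/\lambda_{max}(\mathbf{A}_{\tau_k}\mathbf{A}_{\tau_k}^T)$ on the first term and above by $1/\lambda_{min}(\mathbf{A}_{\tau_k}^T\mathbf{A}_{\tau_k})$ on the second. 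The latter is your noise bound. The ``nonzero sense'' is legitimate because $\mathbf{z}_0=\mathbf{b}$ and $\bar{\mathbf{p}}_j\in\mathcal{R}(\mathbf{A})$ give $\mathbf{r}_{k-1}\in\mathcal{R}(\mathbf{A})$, hence $(\mathbf{r}_{k-1})_{\tau_k}\in\mathcal{R}(\mathbf{A}_{\tau_k})$. The result is a deterministic one-step recursion with factor $1-\lambda_{min}(\mathbf{A}_{\tau_k}^T\mathbf{A}_{\tau_k})/\lambda_{max}(\mathbf{A}_{\tau_k}\mathbf{A}_{\tau_k}^T)$ and noise $\|\overline{\mathbf{e}}_{k-1}\|_2^2/\lambda_{min}(\mathbf{A}_{\tau_k}^T\mathbf{A}_{\tau_k})$, with no need for Lemmas~\ref{lem-2}--\ref{lem-4}. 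The paper divides the noise by $\lambda_{max}$, which is valid only when the bracket is nonnegative, so your $\lambda_{min}$ is the right choice.

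Your Step~3 reservations are also justified. The paper reaches the single factor $\hat{\beta}^{k-1}$ only by bounding every earlier $\mathbb{E}\|\overline{\mathbf{e}}_{j-1}\|_2^2$ by $\hat{\beta}^{k-1}\|\mathbf{b}_{\mathcal{R}(\mathbf{A})}\|_2^2$ instead of $\hat{\beta}^{j-1}\|\mathbf{b}_{\mathcal{R}(\mathbf{A})}\|_2^2$, an inequality in the wrong direction. It also implicitly takes $\mathbf{x}_0=\mathbf{0}$. What the corrected recursion supports is your convolution bound with $\|\mathbf{x}_0-\mathbf{x}_{\star}\|_2^2$, not the literal statement.
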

\begin{proof}
From Algorithm \ref{algo-2-2}, we have
\begin{equation}\label{2-7}
\begin{aligned}
\mathbf{x}_{k}-\mathbf{x}_{\star}&=\mathbf{x}_{k-1}-\mathbf{x}_{\star}+\frac{\mathbf{s}_k^T(\mathbf{b-\mathbf{z}_{k-1}-\mathbf{A}\mathbf{x}_{k-1})}}{\|\mathbf{A}^T\mathbf{s}_k\|_2^2\|_2^2}\mathbf{A}^T\mathbf{s}_k\\
&=\left(\mathbf{I}-\frac{\mathbf{A}^T\mathbf{s}_k\mathbf{s}_k^T\mathbf{A}}{\|\mathbf{A}^T\mathbf{s}_k\|_2^2}\right)(\mathbf{x}_{k-1}-\mathbf{x}_{\star})-\frac{\mathbf{s}_k^T(\mathbf{b}_{\mathcal{R}(\mathbf{A})^{\bot}}-\mathbf{z}_{k-1})\mathbf{A}^T\mathbf{s}_k}{\|\mathbf{A}^T\mathbf{s}_k\|_2^2}.
\end{aligned}
\end{equation}
It follows from Eq.\eqref{2-7} that
$$
\begin{aligned}
&\|\mathbf{x}_{k}-\mathbf{x}_{\star}\|_2^2=\left\|\left(\mathbf{I}-\frac{\mathbf{A}^T\mathbf{s}_k\mathbf{s}_k^T\mathbf{A}}{\|\mathbf{A}^T\mathbf{s}_k\|_2^2}\right)(\mathbf{x}_{k-1}-\mathbf{x}_{\star})\right\|_2^2+\left\|\frac{\mathbf{s}_k^T(\mathbf{b}_{\mathcal{R}(\mathbf{A})^{\bot}}-\mathbf{z}_{k-1})\mathbf{A}^T\mathbf{s}_k}{\|\mathbf{A}^T\mathbf{s}_k\|_2^2}\right\|_2^2\\
&=\|\mathbf{x}_{k-1}-\mathbf{x}_{\star}\|_2^2-\frac{|\mathbf{s}_k^T(\mathbf{b}_{\mathcal{R}(\mathbf{A})}-\mathbf{A}\mathbf{x}_{k-1})|^2-|\mathbf{s}_k^T(\mathbf{b}_{\mathcal{R}(\mathbf{A})^{\bot}}-\mathbf{z}_{k-1})|^2}{\|\mathbf{A}^T\mathbf{s}_k\|_2^2}\\
&=\|\mathbf{x}_{k-1}-\mathbf{x}_{\star}\|_2^2-\frac{\mathbf{s}_k^T(\mathbf{b}-\mathbf{A}\mathbf{x}_{k-1}-\mathbf{z}_{k-1})\mathbf{s}_k^T(\mathbf{b}_{\mathcal{R}(\mathbf{A})}-\mathbf{A}\mathbf{x}_{k-1}-(\mathbf{b}_{\mathcal{R}(\mathbf{A})^{\bot}}-\mathbf{z}_{k-1}))}{\|\mathbf{A}^T\mathbf{s}_k\|_2^2}\\
&\leq\|\mathbf{x}_{k-1}-\mathbf{x}_{\star}\|_2^2-\frac{\mathbf{s}_k^T(\mathbf{b}_{\mathcal{R}(\mathbf{A})}-\mathbf{A}\mathbf{x}_{k-1}-(\mathbf{b}_{\mathcal{R}(\mathbf{A})^{\bot}}-\mathbf{z}_{k-1}))\|\mathbf{s}_k\|_2^2}{\lambda_{max}(\mathbf{A}_{\tau_k}\mathbf{A}_{\tau_k}^T)\|\mathbf{s}_k\|_2^2}\\
&=\|\mathbf{x}_{k-1}-\mathbf{x}_{\star}\|_2^2-\sum_{i\in \tau_k}\frac{(\mathbf{b}_{\mathcal{R}(\mathbf{A})}^{(i)}-\mathbf{A}^{(i)}\mathbf{x}_{k-1}+\mathbf{b}_{\mathcal{R}(\mathbf{A})^{\bot}}^{(i)}-\mathbf{z}_{k-1}^{(i)})}{\lambda_{max}(\mathbf{A}_{\tau_k}\mathbf{A}_{\tau_k}^T)}\\
&\quad(\mathbf{b}_{\mathcal{R}(\mathbf{A})}^{(i)}-\mathbf{A}^{(i)}\mathbf{x}_{k-1}-(\mathbf{b}_{\mathcal{R}(\mathbf{A})^{\bot}}^{(i)}-\mathbf{z}_{k-1}^{(i)}))\\
&=\|\mathbf{x}_{k-1}-\mathbf{x}_{\star}\|_2^2-\frac{\|\mathbf{A}_{\tau_k}(\mathbf{x}_{k-1}-\mathbf{x}_{\star})\|_2^2-\|\mathbf{E}_{\tau_k}(\mathbf{b}_{\mathcal{R}(\mathbf{A})^{\bot}}-\mathbf{z}_{k-1})\|_2^2}{\lambda_{max}(\mathbf{A}_{\tau_k}\mathbf{A}_{\tau_k}^T)}\\
&\leq\|\mathbf{x}_{k-1}-\mathbf{x}_{\star}\|_2^2-\frac{\lambda_{min}(\mathbf{A}_{\tau_k}^T\mathbf{A}_{\tau_k})\|\mathbf{x}_{k-1}-\mathbf{x}_{\star}\|_2^2-\|\mathbf{b}_{\mathcal{R}(\mathbf{A})^{\bot}}-\mathbf{z}_{k-1}\|_2^2}{\lambda_{max}(\mathbf{A}_{\tau_k}\mathbf{A}_{\tau_k}^T)}\\
&=\left(1-\frac{\lambda_{min}(\mathbf{A}_{\tau_k}^T\mathbf{A}_{\tau_k})}{\lambda_{max}(\mathbf{A}_{\tau_k}\mathbf{A}_{\tau_k}^T}\right)\|\mathbf{x}_{k-1}-\mathbf{x}_{\star}\|_2^2+\frac{\|\mathbf{b}_{\mathcal{R}(\mathbf{A})^{\bot}}-\mathbf{z}_{k-1}\|_2^2}{\lambda_{max}(\mathbf{A}_{\tau_k}\mathbf{A}_{\tau_k}^T)}.
\end{aligned}
$$
Define $\hat{\alpha}=\max\limits_k(1-\frac{\lambda_{min}(\mathbf{A}_{\tau_k}^T\mathbf{A}_{\tau_k})}{\lambda_{max}(\mathbf{A}_{\tau_k}\mathbf{A}_{\tau_k}^T})$. According to Theorem \ref{The-4}, the above relation further gives
$$
\begin{aligned}
\mathbb{E}\|\mathbf{x}_{k}-\mathbf{x}_{\star}\|_2^2&\leq \hat{\alpha}\|\mathbf{x}_{k-1}-\mathbf{x}_{\star}\|_2^2+\mathbb{E} \frac{\|\mathbf{b}_{\mathcal{R}(\mathbf{A})^{\bot}}-\mathbf{z}_{k-1}\|_2^2}{\lambda_{max}(\mathbf{A}_{\tau_k}\mathbf{A}_{\tau_k}^T)}\\
&\leq\hat{\alpha}\|\mathbf{x}_{k-1}-\mathbf{x}_{\star}\|_2^2+\frac{\hat{\beta}^{k-1}\|\mathbf{b}_{\mathcal{R}(\mathbf{A})}\|_2^2}{\lambda_{max}(\mathbf{A}_{\tau_k}\mathbf{A}_{\tau_k}^T)}\\
&\leq\hat{\alpha}^2\|\mathbf{x}_{k-2}-\mathbf{x}_{\star}\|_2^2+\hat{\alpha}\frac{\hat{\beta}^{k-1}\|\mathbf{b}_{\mathcal{R}(\mathbf{A})}\|_2^2}{\lambda_{max}(\mathbf{A}_{\tau_k}\mathbf{A}_{\tau_k}^T)}+\frac{\hat{\beta}^{k-1}\|\mathbf{b}_{\mathcal{R}(\mathbf{A})}\|_2^2}{\lambda_{max}(\mathbf{A}_{\tau_k}\mathbf{A}_{\tau_k}^T)}\\
&\leq\hat{\alpha}^k\|\mathbf{x}_{\star}\|_2^2+\sum_{l=0}^{k-1}\hat{\alpha}^{l}\frac{\hat{\beta}^{k-1}\|\mathbf{b}_{\mathcal{R}(\mathbf{A})}\|_2^2}{\lambda_{max}(\mathbf{A}_{\tau_k}\mathbf{A}_{\tau_k}^T)}\\
&\leq\hat{\alpha}^k\|\mathbf{x}_{\star}\|_2^2+\frac{\hat{\beta}^{k-1}\|\mathbf{b}_{\mathcal{R}(\mathbf{A})}\|_2^2}{(1-\hat{\alpha})\lambda_{max}(\mathbf{A}_{\tau_k}\mathbf{A}_{\tau_k}^T)}\\
\end{aligned}
$$
$$
\begin{aligned}
&\leq\hat{\alpha}^k\|\mathbf{x}_{\star}\|_2^2+\frac{\hat{\beta}^{k-1}\lambda_{max}(\mathbf{A}^T\mathbf{A})}{\lambda_{min}(\mathbf{A}_{\tau_k}^T\mathbf{A}_{\tau_k})}\|\mathbf{x}_{\star}\|_2^2\\
&=\left(\hat{\alpha}^k+\hat{\beta}^{k-1}\frac{\lambda_{max}(\mathbf{A}^T\mathbf{A})}{\lambda_{min}(\mathbf{A}_{\tau_k}^T\mathbf{A}_{\tau_k})}\right)\|\mathbf{x}_{\star}\|_2^2.
\end{aligned}
$$
This completes the proof.
\end{proof}

\section{RPLSS with coordinate descent method}\label{section-3}
Note that the column-action solver is an attractive alternative for linear systems \eqref{1-1}, among which the randomized coordinate descent (RCD) method, also called randomized Gauss-Seidel, is well-established. Unlike the RK, which sequentially projects the iterate onto the hyperplanes associated with individual rows, the RCD operates on the column space by successively minimizing the objective residual norm. This structural difference enables RCD to converge unconditionally to a least-squares solution for inconsistent systems, where the RK method otherwise suffers from non-convergent oscillations. However, the RCD updates only one component per iteration and discards historical information, which may hinder its convergence on ill-conditioned and large-scale problems. To develop a column-action algorithm within the randomized PLSS framework, we redefine the selection of the sketch column $\mathbf{s}_k$ and the weight matrix $\mathbf{W}$. This gives a class of alternative randomized PLSS algorithms based on the column actions.
\subsection{RPLSS-CD}\label{subsection-2-3}
In the RPLSS framework, let $\mathbf{s}_k=\mathbf{A}\mathbf{e}_i$ with $\mathbf{e}_i$
being sampled with probability $\mathbb{P}$(column=$i)=$$\frac{\|\mathbf{A}_{(i)}\|_2^2}{\|\mathbf{A}\|_F^2}$ and $\mathbf{W}=(\mathbf{A}^T\mathbf{A})^{-1}$. Then
we derive the following RPLSS coordinate descent (RPLSS-CD) method for solving Eq.\eqref{1-1}.
\begin{algorithm}
	\caption{RPLSS-CD}
	\label{algo-3-1}
	\begin{algorithmic}[1]
        \REQUIRE $\mathbf{A}, \mathbf{b},\mathbf{W}=(\mathbf{A}^T\mathbf{A})^{-1}$\\
        \ENSURE $\mathbf{x}_{k}$\\
        \STATE
       For $k=0, 1, 2,\cdots $, do:\\
         \STATE
 Select $i\in[n]$ with probability $\mathbb{P}$(column=$i)=$$\frac{\|\mathbf{A}_{(i)}\|_2^2}{\|\mathbf{A}\|_F^2}$
 \STATE
   Compute 
   $$\mathbf{s}_k=\mathbf{A}\mathbf{e}_i$$
    \STATE
   Compute 
   $$\mathbf{p}_k=\gamma_{k-1}(\mathbf{W}\mathbf{A}^T\mathbf{s}_k-\mathbf{P}_{k-1}\mathbf{\Theta}_{k-1} ^{-1}\mathbf{P}_{k-1}^T\mathbf{A}^T\mathbf{s}_k)$$
   where $\gamma_{k-1}=\frac{\mathbf{s}_k^T(\mathbf{b}-\mathbf{A}\mathbf{x}_{k-1})}{\mathbf{s}_k^T\mathbf{A}\mathbf{W}\mathbf{A}^T\mathbf{s}_k-\|\mathbf{d}_{k-1}\|_2^2}$ with $\mathbf{d}_{k-1}=\mathbf{\Theta}_{k-1} ^{-1/2}\mathbf{P}_{k-1}^T\mathbf{A}^T\mathbf{s}_k$ and $\mathbf{\Theta}_{k-1}=\mathbf{P}_{k-1}^T\mathbf{W}^{-1}\mathbf{P}_{k-1}$.
            \STATE
            Set $\mathbf{x}_{k} =\mathbf{x}_{k-1}+\mathbf{p}_k$.

	\end{algorithmic}  
\end{algorithm}
\begin{rem}\label{Rem-2}
\rm In fact, for step $2$ of the Algorithm \ref{algo-3-1}, different random column selection rules will result in different randomized PLSS coordinate descent methods. If the $i$-th column is selected by the greedy strategy, then an RPLSS with the greedy coordinate descent method is presented.
\end{rem}

For the convergence property of the RPLSS-CD method, we can establish the following theorem.
\begin{theorem}\label{The-3-1}
{\rm Let $\mathbf{x}_{\star}=\mathbf{A}^{\dagger}\mathbf{b}$ be the unique least-squares solution of the linear system \eqref{1-1}, where $\mathbf{A}\in \mathbb{R}^{m\times n}$ is a rectangular matrix of full column rank and $m\geq n$. Then the iterative sequence $\{\mathbf{x}_{k}\}$ generated by Algorithm \ref{algo-3-1} converges to $\mathbf{x}_{\star }$ for any initial vector $\mathbf{x}_0$ in expectation. Moreover, the corresponding error norm in expectation yields
}
\begin{equation}\label{2-8}
\mathbb{E}_k\|\mathbf{x}_{k}-\mathbf{x}_{\star}\|_{\mathbf{A}^T\mathbf{A}}^2\leq\left(1-\frac{\lambda_{min}(\mathbf{A}\mathbf{A}^T)}{\sin^2(\varphi_{k-1})\|\mathbf{A}\|_F^2}\right)\|\mathbf{x}_{k-1}-\mathbf{x}_{\star}\|_{\mathbf{A}^T\mathbf{A}}^2.   
\end{equation}
\end{theorem}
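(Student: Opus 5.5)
We follow the template of the proof of Theorem \ref{The-2}, now in the $\mathbf{A}^T\mathbf{A}$-geometry. With $\mathbf{W}=(\mathbf{A}^T\mathbf{A})^{-1}$ and $\mathbf{s}_k=\mathbf{A}\mathbf{e}_i$, we have $\mathbf{W}\mathbf{A}^T\mathbf{s}_k=\mathbf{e}_i$ and $\mathbf{s}_k^T\mathbf{A}\mathbf{W}\mathbf{A}^T\mathbf{s}_k=\|\mathbf{A}_{(i)}\|_2^2$. Since $\mathbf{x}_\star$ is the least-squares solution, $\mathbf{A}^T(\mathbf{b}-\mathbf{A}\mathbf{x}_\star)=\mathbf{0}$, so $\mathbf{s}_k^T(\mathbf{b}-\mathbf{A}\mathbf{x}_{k-1})=-\mathbf{A}_{(i)}^T\mathbf{A}\overline{\mathbf{x}}_{k-1}$, where $\overline{\mathbf{x}}_{k-1}=\mathbf{x}_{k-1}-\mathbf{x}_\star$. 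This is the step where inconsistency is harmless.

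First, change variables to $\mathbf{u}_k=\mathbf{A}\overline{\mathbf{x}}_k$ and $\mathbf{q}_k=\mathbf{A}\mathbf{p}_k$. Then $\Theta_{k-1}=\mathbf{P}_{k-1}^T\mathbf{A}^T\mathbf{A}\mathbf{P}_{k-1}$ is the Gram matrix of $\mathbf{Q}_{k-1}=\mathbf{A}\mathbf{P}_{k-1}$. The update becomes
\[
\mathbf{q}_k=\gamma_{k-1}(\mathbf{I}-\Pi_{k-1})\mathbf{A}_{(i)},
\]
where $\Pi_{k-1}$ is the orthogonal projector onto $\mathcal{R}(\mathbf{Q}_{k-1})$. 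Moreover $\|\mathbf{d}_{k-1}\|_2^2=\|\Pi_{k-1}\mathbf{A}_{(i)}\|_2^2$, so the denominator equals $\|(\mathbf{I}-\Pi_{k-1})\mathbf{A}_{(i)}\|_2^2=\sin^2(\varphi_{k-1})\|\mathbf{A}_{(i)}\|_2^2$. Here $\varphi_{k-1}$ is the angle between $\mathbf{A}_{(i)}$ and $\mathcal{R}(\mathbf{A}\mathbf{P}_{k-1})$.

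Next, prove the analogue of Lemma \ref{lem-1}: $\mathbf{Q}_{k}^T\mathbf{u}_k=\mathbf{0}$, i.e. $\mathbf{P}_k^T\mathbf{A}^T\mathbf{A}\overline{\mathbf{x}}_k=\mathbf{0}$. Use the same induction, together with the $\mathbf{A}^T\mathbf{A}$-orthogonality of the $\mathbf{p}_j$ built into the recurrence. With this, $\mathbf{u}_k=\mathbf{u}_{k-1}+\mathbf{q}_k$ is exactly the orthogonal projection of $\mathbf{u}_{k-1}$ onto the complement of $\mathcal{R}(\mathbf{Q}_k)$. Pythagoras then gives
\[
\|\overline{\mathbf{x}}_k\|_{\mathbf{A}^T\mathbf{A}}^2=\|\overline{\mathbf{x}}_{k-1}\|_{\mathbf{A}^T\mathbf{A}}^2-\frac{(\mathbf{A}_{(i)}^T\mathbf{u}_{k-1})^2}{\sin^2(\varphi_{k-1})\|\mathbf{A}_{(i)}\|_2^2}.
\]
Here $\mathbf{A}_{(i)}^T\mathbf{u}_{k-1}=((\mathbf{I}-\Pi_{k-1})\mathbf{A}_{(i)})^T\mathbf{u}_{k-1}$ is used to match the numerator.

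Then take the conditional expectation over $i$, with probability $\|\mathbf{A}_{(i)}\|_2^2/\|\mathbf{A}\|_F^2$. Treating $\sin^2(\varphi_{k-1})$ as in Theorem \ref{The-2}, i.e. bounded uniformly or pulled out as the paper does, the norms cancel and the decrease becomes $\|\mathbf{A}^T\mathbf{u}_{k-1}\|_2^2/(\sin^2(\varphi_{k-1})\|\mathbf{A}\|_F^2)$.

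Finally, since $\mathbf{u}_{k-1}\in\mathcal{R}(\mathbf{A})$, we have $\|\mathbf{A}^T\mathbf{u}_{k-1}\|_2^2\ge\lambda_{\min}^{+}(\mathbf{A}\mathbf{A}^T)\|\mathbf{u}_{k-1}\|_2^2$, using the smallest nonzero eigenvalue per the paper's notation, which equals $\lambda_{\min}(\mathbf{A}^T\mathbf{A})$ under full column rank. This yields \eqref{2-8}.

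The main obstacle is the angle factor. $\varphi_{k-1}$ depends on the random index $i$, so it cannot literally be pulled out of the expectation. I would first try to define $\sin^2(\varphi_{k-1})$ as a maximum over $i$, or an appropriate average, to make the step rigorous, mirroring the handling in Theorem \ref{The-2}. Convergence then follows by showing the contraction factor is strictly below $1$, using an analogue of Lemma \ref{lem-5} together with finite termination after $n$ independent directions.
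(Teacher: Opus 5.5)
Your proposal is correct and follows the same route as the paper. Both pass to $\mathbf{A}\overline{\mathbf{x}}_k$, derive the one-step identity $\|\mathbf{A}\overline{\mathbf{x}}_k\|_2^2=\|\mathbf{A}\overline{\mathbf{x}}_{k-1}\|_2^2-(\mathbf{A}_{(i)}^T\mathbf{A}\overline{\mathbf{x}}_{k-1})^2/(\sin^2(\varphi_{k-1})\|\mathbf{A}_{(i)}\|_2^2)$, average with probabilities $\|\mathbf{A}_{(i)}\|_2^2/\|\mathbf{A}\|_F^2$ so the column norms cancel, and finish with the eigenvalue bound on $\|\mathbf{A}^T\mathbf{A}\overline{\mathbf{x}}_{k-1}\|_2^2$. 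Your explicit induction $\mathbf{P}_k^T\mathbf{A}^T\mathbf{A}\overline{\mathbf{x}}_k=\mathbf{0}$ supplies what the paper's Pythagoras step silently needs (its $P$ is idempotent but not symmetric), and defining $\sin^2(\varphi_{k-1})$ as a maximum over $i$ repairs the paper's unjustified extraction of the $i$-dependent angle from the expectation.
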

\begin{proof}
As was seen from Algorithm \ref{algo-3-1}, it follows that
$$
\begin{aligned}
\mathbf{A}\overline{\mathbf{x}}_k&=\mathbf{A}\left(\overline{\mathbf{x}}_{k-1}+\frac{\mathbf{s}_k^T(\mathbf{b}-\mathbf{A}\mathbf{x}_{k-1})\mathbf{e}_i-\mathbf{s}_k^T(\mathbf{b}-\mathbf{A}\mathbf{x}_{k-1})\mathbf{P}_{k-1}\mathbf{\Theta}_{k-1} ^{-1}\mathbf{P}_{k-1}^T\mathbf{A}^T\mathbf{A}\mathbf{e}_i}{\|\mathbf{A}_{(i)}\|_2^2-\|\mathbf{d}_{k-1}\|_2^2}\right)\\
&=(\mathbf{I}-P)\mathbf{A}\overline{\mathbf{x}}_{k-1}
\end{aligned}
$$
in which $\overline{\mathbf{x}}_k=\mathbf{x}_k-\mathbf{x}_{\star}$ and $P=\frac{\mathbf{A}_{(i)}\mathbf{A}_{(i)}^T-\mathbf{A}\mathbf{P}_{k-1}\mathbf{\Theta}_{k-1} ^{-1}\mathbf{P}_{k-1}^T\mathbf{A}^T\mathbf{A}_{(i)}\mathbf{A}_{(i)}^T}{\|\mathbf{A}_{(i)}\|_2^2-\|\mathbf{d}_{k-1}\|_2^2}$. It is easy to verify that $(\mathbf{I}-P)^2=\mathbf{I}-P$. The above relation gives
$$
\begin{aligned}
\mathbb{E}_k\|\mathbf{A}\overline{\mathbf{x}}_k\|_2^2&=\mathbb{E}_k\|(\mathbf{I}-P)\mathbf{A}\overline{\mathbf{x}}_{k-1}\|_2^2\\
&=\|\mathbf{A}\overline{\mathbf{x}}_{k-1}\|_2^2-\mathbb{E}_k\|P\mathbf{A}\overline{\mathbf{x}}_{k-1}\|_2^2\\
&=\|\mathbf{A}\overline{\mathbf{x}}_{k-1}\|_2^2-\mathbb{E}_k\left\|\frac{(\mathbf{I}-\mathbf{A}\mathbf{P}_{k-1}\mathbf{\Theta}_{k-1} ^{-1}\mathbf{P}_{k-1}^T\mathbf{A}^T)\mathbf{A}_{(i)}\mathbf{A}_{(i)}^T}{\|\mathbf{A}_{(i)}\|_2^2-\|\mathbf{d}_{k-1}\|_2^2}\mathbf{A}\overline{\mathbf{x}}_{k-1}\right\|_2^2\\
&=\|\mathbf{A}\overline{\mathbf{x}}_{k-1}\|_2^2-\mathbb{E}_k\frac{\|(\mathbf{I}-\mathbf{A}\mathbf{P}_{k-1}\mathbf{\Theta}_{k-1} ^{-1}\mathbf{P}_{k-1}^T\mathbf{A}^T)\mathbf{A}_{(i)}\mathbf{A}_{(i)}^T\mathbf{A}\overline{\mathbf{x}}_{k-1}\|_2^2}{(\|\mathbf{A}_{(i)}\|_2^2-\|\mathbf{d}_{k-1}\|_2^2)^2}\\
&=\|\mathbf{A}\overline{\mathbf{x}}_{k-1}\|_2^2-\mathbb{E}_k\frac{(\|\mathbf{A}_{(i)}\|_2^2-\|\mathbf{d}_{k-1}\|_2^2)|\mathbf{A}_{(i)}^T\mathbf{A}\overline{\mathbf{x}}_{k-1}|^2}{(\|\mathbf{A}_{(i)}\|_2^2-\|\mathbf{d}_{k-1}\|_2^2)^2}\\
&=\|\mathbf{A}\overline{\mathbf{x}}_{k-1}\|_2^2-\sum_{i=1}^{n}\frac{\|\mathbf{A}_{(i)}\|_2^2}{\|\mathbf{A}\|_F^2}\frac{|\mathbf{A}_{(i)}^T\mathbf{A}\overline{\mathbf{x}}_{k-1}|^2}{\|\mathbf{A}_{(i)}\|_2^2-\|\mathbf{d}_{k-1}\|_2^2}\\
&=\|\mathbf{A}\overline{\mathbf{x}}_{k-1}\|_2^2-\sum_{i=1}^{n}\frac{\|\mathbf{A}_{(i)}\|_2^2}{\|\mathbf{A}\|_F^2}\frac{|\mathbf{A}_{(i)}^T\mathbf{A}\overline{\mathbf{x}}_{k-1}|^2}{\sin^2(\varphi_{k-1})\|\mathbf{A}_{(i)}\|_2^2}\\
&=\|\mathbf{A}\overline{\mathbf{x}}_{k-1}\|_2^2-\frac{\|\mathbf{A}^T\mathbf{A}\overline{\mathbf{x}}_{k-1}\|_2^2}{\sin^2(\varphi_{k-1})\|\mathbf{A}\|_F^2}\\
&\leq(1-\frac{\lambda_{min}(\mathbf{A}\mathbf{A}^T)}{\sin^2(\varphi_{k-1})\|\mathbf{A}\|_F^2})\|\mathbf{A}\overline{\mathbf{x}}_{k-1}\|_2^2,
\end{aligned}
$$
where $\varphi_{k-1}$ is the angle between the vectors $\mathbf{A}_{(i)}$ and $\mathbf{A}\mathbf{P}_{k-1}$.  
\end{proof}

\begin{rem}\label{Rem-6}
{\rm As $0<\sin^2(\varphi_{k-1})\leq1$, the upper bounds for the convergence rates of RPLSS-CD given in Theorem \ref{The-5} satisfy
$$1-\frac{\lambda_{min}(\mathbf{A}\mathbf{A}^T)}{\sin^2(\varphi_{k-1})\|\mathbf{A}\|_F^2}\leq 1-\frac{\lambda_{min}(\mathbf{A}\mathbf{A}^T)}{\|\mathbf{A}\| _F^2},
$$
which converges faster than the RCD method.
}    
\end{rem}

Next, we give a more generalized RPLSS-CD method, which is more stable than RPLSS-CD.
\begin{algorithm}
	\caption{RPLSS-CD($|\tau_k|$)}
	\label{algo-3-2}
	\begin{algorithmic}[1]
        \REQUIRE $\mathbf{A}, \mathbf{b},\mathbf{W}=(\mathbf{A}^T\mathbf{A})^{-1}$\\
        \ENSURE $\mathbf{x}_{k}$\\
        \STATE
       For $k=0, 1, 2,\cdots,n $, do:\\
         \STATE
 Let $\tilde{\tau}_k=(i_1,i_2,\dots,i_t)$ and $i_t\in[n]$ is drawn independently with probability $\mathbb{P}$(column=$i_t)=$$\frac{\|A_{(i_t)}\|_2^2}{\|A\|_F^2}$. Then define $\tau_k$ to be the collection of unique elements in $\tilde{\tau}_k$.
 \STATE
   Compute 
   $$\mathbf{s}_k=\sum\limits_{i\in\tau_k}\mathbf{A}\mathbf{e}_i$$
    \STATE
   Compute 
   $$\mathbf{p}_k=\gamma_{k-1}(\mathbf{W}\mathbf{A}^T\mathbf{s}_k-\mathbf{P}_{k-1}\mathbf{\Theta}_{k-1} ^{-1}\mathbf{P}_{k-1}^T\mathbf{A}^T\mathbf{s}_k)$$
   where $\gamma_{k-1}=\frac{\mathbf{s}_k^T(\mathbf{b}-\mathbf{A}\mathbf{x}_{k-1})}{\mathbf{s}_k^T\mathbf{A}\mathbf{W}\mathbf{A}^T\mathbf{s}_k-\|\mathbf{d}_{k-1}\|_2^2}$ with $\mathbf{d}_{k-1}=\mathbf{\Theta}_{k-1} ^{-1/2}\mathbf{P}_{k-1}^T\mathbf{A}^T\mathbf{s}_k$ and $\mathbf{\Theta}_{k-1}=\mathbf{P}_{k-1}^T\mathbf{W}^{-1}\mathbf{P}_{k-1}$.
            \STATE
            Set $\mathbf{x}_{k} =\mathbf{x}_{k-1}+\mathbf{p}_k$.

	\end{algorithmic}  
\end{algorithm}

Regarding the convergence of the Algorithm \ref{algo-3-2}, we establish the following theorem.

\begin{theorem}\label{The-3-2}
{\rm  Let $\mathbf{x}_{\star}=\mathbf{A}^{\dagger}\mathbf{b}$ be the unique least-squares solution of the linear system \eqref{1-1}, where $\mathbf{A}\in \mathbb{R}^{m\times n}$ is a rectangular matrix of full column rank and $m\geq n$. Then the iterative sequence $\{\mathbf{x}_{k}\}$ generated by Algorithm \ref{algo-3-2} converges to $\mathbf{x}_{\star }$ for any initial vector $\mathbf{x}_0\in \mathbb{R}^n$. Moreover, the corresponding error norm yields
}
\begin{equation}\label{3-10}
\|\mathbf{x}_{k}-\mathbf{x}_{\star}\|_{\mathbf{A}^T\mathbf{A}}^2\leq\left(1-\frac{\lambda_{min}(\mathbf{A}^T\mathbf{A})}{|\tau_k|\sin^2(\bar{\varphi}_{k-1})\|\mathbf{A}_{\tau_k}\|_F^2}\right)\|\mathbf{x}_{k-1}-\mathbf{x}_{\star}\|_{\mathbf{A}^T\mathbf{A}}^2,    
\end{equation}
\end{theorem}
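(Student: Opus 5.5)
The plan mirrors the proof of Theorem \ref{The-3-1}, with the single column $\mathbf{A}_{(i)}$ replaced by the block sketch $\mathbf{s}_k=\sum_{i\in\tau_k}\mathbf{A}\mathbf{e}_i=\mathbf{A}_{:,\tau_k}\mathbf{1}$.

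\emph{Step 1: an exact one-step identity.} With $\mathbf{W}=(\mathbf{A}^T\mathbf{A})^{-1}$, the vector $\mathbf{W}\mathbf{A}^T\mathbf{s}_k=\sum_{i\in\tau_k}\mathbf{e}_i$ is the indicator vector of $\tau_k$, so $\mathbf{A}\mathbf{W}\mathbf{A}^T\mathbf{s}_k=\mathbf{s}_k$. Multiplying the update by $\mathbf{A}$ gives
\[
\mathbf{A}\overline{\mathbf{x}}_k=(\mathbf{I}-P)\mathbf{A}\overline{\mathbf{x}}_{k-1},\qquad P=\frac{(\mathbf{I}-\mathbf{Q}_{k-1})\mathbf{s}_k\mathbf{s}_k^T}{\|\mathbf{s}_k\|_2^2-\|\mathbf{d}_{k-1}\|_2^2},
\]
where $\mathbf{Q}_{k-1}$ is the orthogonal projector onto $\mathcal{R}(\mathbf{A}\mathbf{P}_{k-1})$. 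The residual part $\mathbf{b}_{\mathcal{R}(\mathbf{A})^\perp}$ drops out because $\mathbf{s}_k\in\mathcal{R}(\mathbf{A})$, so $\mathbf{s}_k^T(\mathbf{b}-\mathbf{A}\mathbf{x}_{k-1})=-\mathbf{s}_k^T\mathbf{A}\overline{\mathbf{x}}_{k-1}$. The analogue of Lemma \ref{lem-1} in the $\mathbf{A}^T\mathbf{A}$ inner product gives $\mathbf{Q}_{k-1}\mathbf{A}\overline{\mathbf{x}}_{k-1}=0$. Also $\|\mathbf{s}_k\|_2^2-\|\mathbf{d}_{k-1}\|_2^2=\|(\mathbf{I}-\mathbf{Q}_{k-1})\mathbf{s}_k\|_2^2=\sin^2(\bar\varphi_{k-1})\|\mathbf{s}_k\|_2^2$, where $\bar\varphi_{k-1}$ is the angle between $\mathbf{s}_k$ and $\mathcal{R}(\mathbf{A}\mathbf{P}_{k-1})$. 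Pythagoras, exactly as in Theorem \ref{The-3-1}, then yields
\[
\|\overline{\mathbf{x}}_k\|_{\mathbf{A}^T\mathbf{A}}^2=\|\overline{\mathbf{x}}_{k-1}\|_{\mathbf{A}^T\mathbf{A}}^2-\frac{|\mathbf{s}_k^T\mathbf{A}\overline{\mathbf{x}}_{k-1}|^2}{\sin^2(\bar\varphi_{k-1})\|\mathbf{s}_k\|_2^2}.
\]

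\emph{Step 2: lower-bound the decrease.} For the denominator, Cauchy--Schwarz gives $\|\mathbf{s}_k\|_2^2=\|\mathbf{A}_{:,\tau_k}\mathbf{1}\|_2^2\le|\tau_k|\,\|\mathbf{A}_{\tau_k}\|_F^2$; this is where the factor $|\tau_k|\,\|\mathbf{A}_{\tau_k}\|_F^2$ in \eqref{3-10} comes from. For the numerator I need $|\mathbf{s}_k^T\mathbf{A}\overline{\mathbf{x}}_{k-1}|^2\ge\lambda_{min}(\mathbf{A}^T\mathbf{A})\|\overline{\mathbf{x}}_{k-1}\|_{\mathbf{A}^T\mathbf{A}}^2$, up to whatever constant can be secured. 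The route I would try first exploits the $\mathbf{A}^T\mathbf{A}$-orthogonality of $\overline{\mathbf{x}}_{k-1}$ to the previous directions, together with the fact that $\mathbf{s}_k^T\mathbf{A}\overline{\mathbf{x}}_{k-1}=\sum_{i\in\tau_k}(\mathbf{A}^T\mathbf{A}\overline{\mathbf{x}}_{k-1})_i$. The aim is to bound this sum of gradient entries below by $\|\mathbf{A}^T\mathbf{A}\overline{\mathbf{x}}_{k-1}\|_2^2$ after normalization, and then use $\|\mathbf{A}^T\mathbf{A}\mathbf{v}\|_2^2\ge\lambda_{min}(\mathbf{A}^T\mathbf{A})\|\mathbf{v}\|_{\mathbf{A}^T\mathbf{A}}^2$.

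\emph{Main obstacle.} The numerator bound in Step 2 is the hard part. It holds only as stated, or in expectation over $\tau_k$, and cancellations among the entries in the sum can make it fail for a fixed deterministic $\tau_k$. If the deterministic bound fails, I would fall back to conditional expectation. Applying Lemma \ref{lem-4} to separate numerator and denominator, and averaging over the column sampling as in Theorem \ref{The-3-1}, should give $\mathbb{E}|\mathbf{s}_k^T\mathbf{A}\overline{\mathbf{x}}_{k-1}|^2\gtrsim\|\mathbf{A}^T\mathbf{A}\overline{\mathbf{x}}_{k-1}\|_2^2$, and hence \eqref{3-10}.

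\emph{Convergence.} Finally, $0<\sin^2\le1$ keeps each factor in $[0,1)$, so the error contracts at every step. Since $\mathbf{S}_k$ gains rank, the finite-termination Corollary additionally gives $\mathbf{x}_K=\mathbf{x}_\star$ for some $K\le n$.
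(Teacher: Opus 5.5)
\textbf{There is a genuine gap, and it cannot be closed: \eqref{3-10} is false as stated.}

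Your Step 1 is correct and follows the paper's argument. The paper also derives $\mathbf{A}\overline{\mathbf{x}}_k=(\mathbf{I}-P)\mathbf{A}\overline{\mathbf{x}}_{k-1}$, obtains the exact decrement $|\mathbf{s}_k^T\mathbf{A}\overline{\mathbf{x}}_{k-1}|^2/(\sin^2(\bar\varphi_{k-1})\|\mathbf{s}_k\|_2^2)$, and replaces $\|\mathbf{s}_k\|_2^2$ by the larger $|\tau_k|\,\|\mathbf{A}_{\tau_k}\|_F^2$. Your justification of Pythagoras via $\mathbf{Q}_{k-1}\mathbf{A}\overline{\mathbf{x}}_{k-1}=\mathbf{0}$ is the right one, since $(\mathbf{I}-P)^2=\mathbf{I}-P$ alone does not give it.

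The gap is the numerator bound you leave open. With $\mathbf{g}=\mathbf{A}^T\mathbf{A}\overline{\mathbf{x}}_{k-1}$, you need $(\sum_{i\in\tau_k}g_i)^2\ge\lambda_{min}(\mathbf{A}^T\mathbf{A})\|\mathbf{A}\overline{\mathbf{x}}_{k-1}\|_2^2$. The paper's proof gets this by asserting $(\sum_{i\in\tau_k}g_i)^2\ge\sum_{i\in\tau_k}g_i^2=\|\mathbf{A}_{\tau_k}^T\mathbf{A}\overline{\mathbf{x}}_{k-1}\|_2^2\ge\lambda_{min}(\mathbf{A}^T\mathbf{A})\|\mathbf{A}\overline{\mathbf{x}}_{k-1}\|_2^2$. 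Neither inequality holds in general. The first fails under cancellation, as you suspected. The second asks a partial sum of squared gradient entries to dominate a quantity that only the full sum $\|\mathbf{g}\|_2^2$ controls.

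The $\mathbf{A}^T\mathbf{A}$-orthogonality you hoped to use does not help. Writing $\mathbf{1}_\tau$ for the indicator vector of $\tau$, it only gives $\mathbf{1}_{\tau_j}^T\mathbf{g}=0$ for earlier blocks $j<k$. That says nothing about a fresh $\tau_k$, and it forces a zero numerator if $\tau_k$ repeats an earlier block.

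Here is a counterexample. Take $\mathbf{A}=\mathbf{I}_2$, $\mathbf{b}=\mathbf{0}$, $\mathbf{x}_0=(1,-1)^T$, and the first step, where the history is empty and $\sin^2(\bar\varphi_0)=1$.
If $\tau_1=\{1\}$, the algorithm gives $\mathbf{x}_1=(0,-1)^T$. The left side is $1$, but the factor is $1-1/(1\cdot1\cdot1)=0$.
If $\tau_1=\{1,2\}$, then $\mathbf{s}_1^T(\mathbf{b}-\mathbf{A}\mathbf{x}_0)=0$, so $\mathbf{x}_1=\mathbf{x}_0$. The left side $2$ exceeds $\frac{3}{4}\cdot 2$.
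Both blocks have positive probability under the sampling rule (the second whenever $t\ge 2$).

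Your fallback to expectation does not rescue the statement. \eqref{3-10} is a pathwise inequality whose right-hand side depends on the realized block through $|\tau_k|$, $\|\mathbf{A}_{\tau_k}\|_F^2$ and $\bar\varphi_{k-1}$. No bound on $\mathbb{E}_k$ can imply it, so ``hence \eqref{3-10}'' is a non sequitur.

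An expectation argument does give a different, valid result. Since $\|(\mathbf{I}-\mathbf{Q}_{k-1})\mathbf{s}_k\|_2\le\|\mathbf{s}_k\|_2$, the decrement is at least $(\mathbf{1}_{\tau_k}^T\mathbf{g})^2/\|\mathbf{A}\mathbf{1}_{\tau_k}\|_2^2$. Let $\mathbf{M}=\mathbb{E}[\mathbf{1}_{\tau_k}\mathbf{1}_{\tau_k}^T]$, which is positive definite because every singleton block has positive probability. Averaging then yields the expected contraction factor $1-\lambda_{min}(\mathbf{M})\lambda_{min}(\mathbf{A}^T\mathbf{A})/\max_\tau\|\mathbf{A}\mathbf{1}_\tau\|_2^2$, not the one in \eqref{3-10}.

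Two claims in your convergence paragraph are also unjustified.
\emph{The factor's range.} Because $\sin^2(\bar\varphi_{k-1})$ sits in the denominator of the subtracted term, the factor in \eqref{3-10} is not confined to $[0,1)$ and can be negative.
\emph{Rank growth.} $\mathbf{S}_k$ need not gain rank: a repeated block gives $\sin(\bar\varphi_{k-1})=0$ and a degenerate step. So the finite-termination corollary does not apply automatically.
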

\begin{proof}
 From Algorithm \ref{algo-3-2}, we observe that
$$\begin{small}
\begin{aligned}
\mathbf{A}\overline{\mathbf{x}}_k&=\mathbf{A}\left(\overline{\mathbf{x}}_{k-1}+\frac{\mathbf{s}_k^T(\mathbf{b}-\mathbf{A}\mathbf{x}_{k-1})\sum\limits_{i\in\tau_k}\mathbf{e}_i-\mathbf{s}_k^T(\mathbf{b}-\mathbf{A}\mathbf{x}_{k-1})\mathbf{P}_{k-1}\mathbf{\Theta}_{k-1} ^{-1}\mathbf{P}_{k-1}^T\mathbf{A}^T\mathbf{A}\sum\limits_{i\in\tau_k}\mathbf{e}_i}{\|\mathbf{s}_{k}\|_2^2-\|\mathbf{d}_{k-1}\|_2^2}\right)\\
&=(\mathbf{I}-P)\mathbf{A}\overline{\mathbf{x}}_{k-1},
\end{aligned}\end{small}
$$
in which $\overline{\mathbf{x}}_k=\mathbf{x}_k-\mathbf{x}_{\star}$ and $P=\frac{\mathbf{s}_{k}\mathbf{s}_{k}^T-\mathbf{A}\mathbf{P}_{k-1}\mathbf{\Theta}_{k-1} ^{-1}\mathbf{P}_{k-1}^T\mathbf{A}^T\mathbf{s}_{k}\mathbf{s}_{k}^T}{\|\mathbf{s}_{k}\|_2^2-\|\mathbf{d}_{k-1}\|_2^2}$. It is easy to verify that $(\mathbf{I}-P)^2=\mathbf{I}-P$. Therefore,
$$
\begin{aligned}
\|\mathbf{A}\overline{\mathbf{x}}_k\|_2^2&=\|(\mathbf{I}-P)\mathbf{A}\overline{\mathbf{x}}_{k-1}\|_2^2\\
&=\|\mathbf{A}\overline{\mathbf{x}}_{k-1}\|_2^2-\|P\mathbf{A}\overline{\mathbf{x}}_{k-1}\|_2^2\\
&=\|\mathbf{A}\overline{\mathbf{x}}_{k-1}\|_2^2-\left\|\frac{(\mathbf{I}-\mathbf{A}\mathbf{P}_{k-1}\mathbf{\Theta}_{k-1} ^{-1}\mathbf{P}_{k-1}^T\mathbf{A}^T)\mathbf{s}_{k}\mathbf{s}_{k}^T}{\|\mathbf{s}_{k}\|_2^2-\|\mathbf{d}_{k-1}\|_2^2}\mathbf{A}\overline{\mathbf{x}}_{k-1}\right\|_2^2\\
&=\|\mathbf{A}\overline{\mathbf{x}}_{k-1}\|_2^2-\frac{\|(\mathbf{I}-\mathbf{A}\mathbf{P}_{k-1}\mathbf{\Theta}_{k-1} ^{-1}\mathbf{P}_{k-1}^T\mathbf{A}^T)\mathbf{s}_{k}\mathbf{s}_{k}^T\mathbf{A}\overline{\mathbf{x}}_{k-1}\|_2^2}{(\|\mathbf{s}_{k}\|_2^2-\|\mathbf{d}_{k-1}\|_2^2)^2}\\
&=\|\mathbf{A}\overline{\mathbf{x}}_{k-1}\|_2^2-\frac{(\|\mathbf{s}_{k}\|_2^2-\|\mathbf{d}_{k-1}\|_2^2)|\mathbf{s}_{k}^T\mathbf{A}\overline{\mathbf{x}}_{k-1}|^2}{(\|\mathbf{s}_{k}\|_2^2-\|\mathbf{d}_{k-1}\|_2^2)^2}\\
\end{aligned}
$$
$$
\begin{aligned}
&=\|\mathbf{A}\overline{\mathbf{x}}_{k-1}\|_2^2-\frac{|\mathbf{s}_{k}^T\mathbf{A}\overline{\mathbf{x}}_{k-1}|^2}{\|\mathbf{s}_{k}\|_2^2-\|\mathbf{d}_{k-1}\|_2^2}\\
&=\|\mathbf{A}\overline{\mathbf{x}}_{k-1}\|_2^2-\frac{|\mathbf{s}_{k}^T\mathbf{A}\overline{\mathbf{x}}_{k-1}|^2}{\sin^2(\bar{\varphi}_{k-1})\|\mathbf{s}_{k}\|_2^2}\\
&\leq\|\mathbf{A}\overline{\mathbf{x}}_{k-1}\|_2^2-\frac{\|\mathbf{A}_{\tau_k}^T\mathbf{A}\overline{\mathbf{x}}_{k-1}\|_2^2}{\sin^2(\bar{\varphi}_{k-1})|\tau_k|\|\mathbf{A}_{\tau_k}\|_F^2}\\
&\leq(1-\frac{\lambda_{min}(\mathbf{A}^T\mathbf{A})}{|\tau_k|\sin^2(\bar{\varphi}_{k-1})\|\mathbf{A}_{\tau_k}\|_F^2})\|\mathbf{A}\overline{\mathbf{x}}_{k-1}\|_2^2,
\end{aligned}
$$
where $\bar{\varphi}_{k-1}$ is the angle between the vectors $\mathbf{s}_{k}$ and $\mathbf{A}\mathbf{P}_{k-1}$.  
\end{proof}

\subsection{RPLSS-GCD}\label{subsection-2-4}
As presented in the previous section, the RPLSS‑CD randomly selects a single column per iteration to construct the sketch. While this strategy is memory‑efficient, it updates only one coordinate at a time and ignores the global error information contained in the residual vector. To making fully use of the residuals, we construct a sketch column $\mathbf{s}_k$ by randomly linearly combining the residuals with the standard basis vectors, which can vastly accelerate the convergence of ill-conditioned problems. Specifically, in the RPLSS framework, we set $\mathbf{s}_k=\sum_{i\in \tau_k}\mathbf{A}\mathbf{r}^{(i)}_{k-1}\mathbf{e}_i$ and $\mathbf{W}=(\mathbf{A}^T\mathbf{A})^{-1}$, then develop the following RPLSS Gaussian coordinate descent (RPLSS-GCD) method for solving Eq.\eqref{1-1}.
\begin{algorithm}
	\caption{RPLSS-GCD}
	\label{algo-2-5}
	\begin{algorithmic}[1]
        \REQUIRE $\mathbf{A}, \mathbf{b},\mathbf{W}=(\mathbf{A}^T\mathbf{A})^{-1}$\\
        \ENSURE $\mathbf{x}_{k}$\\
        \STATE
       For $k=0, 1, 2,\cdots $, do:\\
         \STATE
Generate an indicator set $\tau_{k}$, i.e., choosing $l n$ columns of $\mathbf{A}$ by using the simple random sampling, where $0<l<1$
 \STATE
   Compute 
   $$\mathbf{s}_k=\sum_{i\in \tau_{k}}\mathbf{A}\mathbf{r}^{(i)}_{k-1}\mathbf{e}_i$$
   where $\mathbf{r}_{k-1}=\mathbf{A}^T(\mathbf{b}-\mathbf{A}\mathbf{x}_{k-1})$
    \STATE
   Compute 
   $$\mathbf{p}_k=\gamma_{k-1}(\mathbf{W}\mathbf{A}^T\mathbf{s}_k-\mathbf{P}_{k-1}\mathbf{\Theta}_{k-1} ^{-1}\mathbf{P}_{k-1}^T\mathbf{A}^T\mathbf{s}_k)$$
   where $\gamma_{k-1}=\frac{\mathbf{s}_k^T(\mathbf{b}-\mathbf{A}\mathbf{x}_{k-1})}{\mathbf{s}_k^T\mathbf{A}\mathbf{W}\mathbf{A}^T\mathbf{s}_k-\|\mathbf{d}_{k-1}\|_2^2}$ with $\mathbf{d}_{k-1}=\mathbf{\Theta}_{k-1} ^{-1/2}\mathbf{P}_{k-1}^T\mathbf{A}^T\mathbf{s}_k$ and $\mathbf{\Theta}_{k-1}=\mathbf{P}_{k-1}^T\mathbf{W}^{-1}\mathbf{P}_{k-1}$.
            \STATE
            Set $\mathbf{x}_{k} =\mathbf{x}_{k-1}+\mathbf{p}_k$.

	\end{algorithmic}  
\end{algorithm}

Next, we show the exponential convergence of the Algorithm \ref{algo-2-5}.
\begin{theorem}\label{The-3-3}
{\rm Let $\mathbf{x}_{\star}=\mathbf{A}^{\dagger}\mathbf{b}$ be the unique least-squares solution of the inconsistent linear system \eqref{1-1}, where $\mathbf{A}\in \mathbb{R}^{m\times n}$ is a rectangular matrix of full column rank and $m\geq n$. Then the iterative sequence $\{\mathbf{x}_{k}\}$ generated by Algorithm \ref{algo-2-5} converges to $\mathbf{x}_{\star }$ for any initial vector $\mathbf{x}_0$. Moreover, the corresponding error norm in expectation yields
}
\begin{equation}\label{2-10}
\mathbb{E}_k\|\mathbf{x}_{k}-\mathbf{x}_{\star}\|_{\mathbf{A}^T\mathbf{A}}^2\leq\left(1-\frac{l\lambda_{min}^2(\mathbf{A}^T\mathbf{A})}{\sin^2(\vartheta)\lambda_{max}^2(\mathbf{A}^T\mathbf{A})}\right)\|\mathbf{x}_{k-1}-\mathbf{x}_{\star}\|_{\mathbf{A}^T\mathbf{A}}^2.  
\end{equation}
\end{theorem}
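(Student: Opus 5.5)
The proof will follow the template of Theorems \ref{The-2} and \ref{The-3-1}, working in the residual space. Set $\overline{\mathbf{x}}_k=\mathbf{x}_k-\mathbf{x}_\star$ and $\mathbf{W}=(\mathbf{A}^T\mathbf{A})^{-1}$. Since $\mathbf{A}^T(\mathbf{b}-\mathbf{A}\mathbf{x}_\star)=\mathbf{0}$, we have $\mathbf{r}_{k-1}=\mathbf{A}^T(\mathbf{b}-\mathbf{A}\mathbf{x}_{k-1})=-\mathbf{A}^T\mathbf{A}\overline{\mathbf{x}}_{k-1}$. For the sketch $\mathbf{s}_k=\mathbf{A}\mathbf{v}_k$ with $\mathbf{v}_k=\sum_{i\in\tau_k}\mathbf{r}^{(i)}_{k-1}\mathbf{e}_i$, this gives $\mathbf{s}_k^T(\mathbf{b}-\mathbf{A}\mathbf{x}_{k-1})=\mathbf{v}_k^T\mathbf{r}_{k-1}=\|(\mathbf{r}_{k-1})_{\tau_k}\|_2^2$. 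This identity plays the role that $\|\mathbf{b}_{\tau_k}-\mathbf{A}_{\tau_k}\mathbf{x}_{k-1}\|_2^2$ played in Theorem \ref{The-2}.

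\textbf{Step 1 (one-step identity).} Exactly as in Theorem \ref{The-3-1}, multiply the update by $\mathbf{A}$. Because $\mathbf{A}\mathbf{W}\mathbf{A}^T\mathbf{s}_k=\mathbf{s}_k$ (the range of $\mathbf{A}$), we get $\mathbf{A}\overline{\mathbf{x}}_k=(\mathbf{I}-P)\mathbf{A}\overline{\mathbf{x}}_{k-1}$, with
\[
P=\frac{(\mathbf{I}-\mathbf{Q})\mathbf{s}_k\mathbf{s}_k^T}{\|\mathbf{s}_k\|_2^2-\|\mathbf{d}_{k-1}\|_2^2},
\]
where $\mathbf{Q}$ is the orthogonal projector onto $\mathcal{R}(\mathbf{A}\mathbf{P}_{k-1})$. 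The analogue of Lemma \ref{lem-1} gives $\mathbf{Q}\mathbf{A}\overline{\mathbf{x}}_{k-1}=\mathbf{0}$. From this we obtain the Pythagorean identity
\[
\|\mathbf{A}\overline{\mathbf{x}}_k\|_2^2=\|\mathbf{A}\overline{\mathbf{x}}_{k-1}\|_2^2-\frac{|\mathbf{s}_k^T\mathbf{A}\overline{\mathbf{x}}_{k-1}|^2}{\sin^2(\vartheta)\|\mathbf{s}_k\|_2^2},
\]
where $\vartheta$ is the angle between $\mathbf{s}_k$ and $\mathcal{R}(\mathbf{A}\mathbf{P}_{k-1})$, so that $\|\mathbf{s}_k\|_2^2-\|\mathbf{d}_{k-1}\|_2^2=\sin^2(\vartheta)\|\mathbf{s}_k\|_2^2$. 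The numerator equals $\|(\mathbf{r}_{k-1})_{\tau_k}\|_2^4$.

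\textbf{Step 2 (conditional expectation).} Take the expectation over $\tau_k$ and apply Lemma \ref{lem-4} with $X=\|(\mathbf{r}_{k-1})_{\tau_k}\|_2^2$ and $Y=\sin^2(\vartheta)\|\mathbf{s}_k\|_2^2$, treating $\sin^2\vartheta$ as a fixed bound, as the paper does. By Lemmas \ref{lem-2} and \ref{lem-3} with $p_i=l$, the numerator becomes $l^2\|\mathbf{r}_{k-1}\|_2^4=l^2\|\mathbf{A}^T\mathbf{A}\overline{\mathbf{x}}_{k-1}\|_2^4$. For the denominator, write $\|\mathbf{s}_k\|_2^2=\|\mathbf{A}\mathbf{v}_k\|_2^2\le\lambda_{max}(\mathbf{A}^T\mathbf{A})\|\mathbf{v}_k\|_2^2$. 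Its expectation is then at most
\[
l\,\lambda_{max}(\mathbf{A}^T\mathbf{A})\|\mathbf{A}^T\mathbf{A}\overline{\mathbf{x}}_{k-1}\|_2^2
\le l\,\lambda_{max}^2(\mathbf{A}^T\mathbf{A})\|\mathbf{A}\overline{\mathbf{x}}_{k-1}\|_2^2.
\]
This is the simpler, diagonal-only version of the computation in \eqref{22-2}.

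\textbf{Step 3 (closing the bound).} The ratio is at least
\[
\frac{l\|\mathbf{A}^T\mathbf{A}\overline{\mathbf{x}}_{k-1}\|_2^2}{\sin^2(\vartheta)\lambda_{max}(\mathbf{A}^T\mathbf{A})}.
\]
Since $\mathbf{A}$ has full column rank, $\|\mathbf{A}^T\mathbf{A}\overline{\mathbf{x}}\|_2^2\ge\lambda_{min}(\mathbf{A}^T\mathbf{A})\|\mathbf{A}\overline{\mathbf{x}}\|_2^2$. This yields the rate $1-l\lambda_{min}/(\sin^2\vartheta\,\lambda_{max})$, which is even stronger than \eqref{2-10}; since $\lambda_{min}/\lambda_{max}\le 1$, it implies \eqref{2-10}. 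Iterating with the tower property and using $\|\cdot\|_{\mathbf{A}^T\mathbf{A}}=\|\mathbf{A}\,\cdot\|_2$, together with nonnegativity of the factor (as in Lemma \ref{lem-5}), gives convergence.

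\textbf{Main obstacle.} The delicate point is Step 1: checking that the history projector in the $\mathbf{W}$-weighted form really becomes an orthogonal projector in the range space. This requires $\mathbf{P}_{k-1}^T\mathbf{A}^T\mathbf{A}\overline{\mathbf{x}}_{k-1}=\mathbf{0}$, i.e. the $\mathbf{W}^{-1}$-orthogonality version of Lemma \ref{lem-1}. I would first prove it by the same induction with $\mathbf{A}^T\mathbf{A}$-inner products. A secondary issue is treating $\sin^2\vartheta$ as deterministic inside the expectation; I would handle this by replacing it with its supremum over $\tau_k$, which bounds the expression in the correct direction.
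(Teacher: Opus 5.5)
Your proof is correct and follows the paper's own route: the range-space recursion $\mathbf{A}\overline{\mathbf{x}}_k=(\mathbf{I}-P)\mathbf{A}\overline{\mathbf{x}}_{k-1}$, the Pythagorean decrease $\|(\mathbf{r}_{k-1})_{\tau_k}\|_2^4/(\sin^2(\vartheta)\|\mathbf{s}_k\|_2^2)$, Lemma~\ref{lem-4}, and a bound on $\mathbb{E}_{\tau_k}\|\mathbf{s}_k\|_2^2$. Your deviations are refinements rather than a new method: you prove the $\mathbf{A}^T\mathbf{A}$-orthogonality $\mathbf{P}_{k-1}^T\mathbf{A}^T\mathbf{A}\overline{\mathbf{x}}_{k-1}=\mathbf{0}$ that the Pythagorean step actually needs (idempotence of the oblique $P$ alone is not enough), you bound $\|\mathbf{s}_k\|_2^2\le\lambda_{max}(\mathbf{A}^T\mathbf{A})\|\mathbf{v}_k\|_2^2$ instead of using the pairwise-inclusion expansion in \eqref{3-4}, and by cancelling one factor of $\|\mathbf{r}_{k-1}\|_2^2$ and taking the conditional supremum of $\sin^2\vartheta$ you obtain the sharper, properly justified rate $1-l\lambda_{min}(\mathbf{A}^T\mathbf{A})/(\sin^2(\vartheta)\lambda_{max}(\mathbf{A}^T\mathbf{A}))$, which implies \eqref{2-10}.
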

\begin{proof}
Observe from Algorithm \ref{algo-2-5} that the error $\overline{\mathbf{x}}_k=\mathbf{x}_{k-1}-\mathbf{x}_{\star}$ yields
$$
\begin{aligned}
\mathbf{A}\overline{\mathbf{x}}_k&=\mathbf{A}\left(\overline{\mathbf{x}}_{k-1}-\frac{\mathbf{s}_k^T\mathbf{A}\overline{\mathbf{x}}_{k-1}\mathbf{W}\mathbf{A}^T\mathbf{s}_k-\mathbf{s}_k^T\mathbf{A}\overline{\mathbf{x}}_{k-1}\mathbf{P}_{k-1}\mathbf{\Theta}_{k-1} ^{-1}\mathbf{P}_{k-1}^T\mathbf{A}^T\mathbf{s}_k}{\mathbf{s}_k^T\mathbf{A}\mathbf{W}\mathbf{A}^T\mathbf{s}_k-\|\mathbf{d}_{k-1}\|_2^2}\right)\\
&=(\mathbf{I}-P)\mathbf{A}\overline{\mathbf{x}}_{k-1}
\end{aligned}
$$
where $P=\frac{\mathbf{s}_k\mathbf{s}_k^T-\mathbf{A}\mathbf{P}_{k-1}\mathbf{\Theta}_{k-1} ^{-1}\mathbf{P}_{k-1}^T\mathbf{A}^T\mathbf{s}_k\mathbf{s}_k^T}{\|\mathbf{s}_k\|_2^2-\|\mathbf{d}_{k-1}\|_2^2}$. It is easy to verify that $(\mathbf{I}-P)^2=\mathbf{I}-P$.\\
It follows from the above relation that
$$
\begin{aligned}
\|\mathbf{A}\overline{\mathbf{x}}_k\|_2^2&=\|(\mathbf{I}-P)\mathbf{A}\overline{\mathbf{x}}_{k-1}\|_2^2\\
&=\|\mathbf{A}\overline{\mathbf{x}}_{k-1}\|_2^2-\|P\mathbf{A}\overline{\mathbf{x}}_{k-1}\|_2^2\\
&=\|\mathbf{A}\overline{\mathbf{x}}_{k-1}\|_2^2-\left\|\frac{(\mathbf{I}-\mathbf{A}\mathbf{P}_{k-1}\mathbf{\Theta}_{k-1} ^{-1}\mathbf{P}_{k-1}^T\mathbf{A}^T)\mathbf{s}_k\mathbf{s}_k^T}{\|\mathbf{s}_{k}\|_2^2-\|\mathbf{d}_{k-1}\|_2^2}\mathbf{A}\overline{\mathbf{x}}_{k-1}\right\|_2^2\\
&=\|\mathbf{A}\overline{\mathbf{x}}_{k-1}\|_2^2-\frac{\|(\mathbf{I}-\mathbf{A}\mathbf{P}_{k-1}\mathbf{\Theta}_{k-1} ^{-1}\mathbf{P}_{k-1}^T\mathbf{A}^T)\mathbf{s}_k\mathbf{s}_k^T\mathbf{A}\overline{\mathbf{x}}_{k-1}\|_2^2}{(\|\mathbf{s}_{k}\|_2^2-\|\mathbf{d}_{k-1}\|_2^2)^2}\\
&=\|\mathbf{A}\overline{\mathbf{x}}_{k-1}\|_2^2-\frac{(\|\mathbf{s}_{k}\|_2^2-\|\mathbf{d}_{k-1}\|_2^2)|\mathbf{s}_{k}^T\mathbf{A}\overline{\mathbf{x}}_{k-1}|^2}{(\|\mathbf{s}_{k}\|_2^2-\|\mathbf{d}_{k-1}\|_2^2)^2}\\
&=\|\mathbf{A}\overline{\mathbf{x}}_{k-1}\|_2^2-\frac{|\mathbf{s}_{k}^T\mathbf{A}\overline{\mathbf{x}}_{k-1}|^2}{\|(\mathbf{I}-\mathbf{A}\mathbf{P}_{k-1}\mathbf{\Theta}_{k-1} ^{-1}\mathbf{P}_{k-1}^T\mathbf{A}^T)\mathbf{s}_{k}\|_2^2}\\
&=\|\mathbf{A}\overline{\mathbf{x}}_{k-1}\|_2^2-\frac{|\mathbf{s}_{k}^T\mathbf{A}\overline{\mathbf{x}}_{k-1}|^2}{\sin^2(\vartheta)\|\mathbf{s}_{k}\|_2^2}\\
&=\|\mathbf{A}\overline{\mathbf{x}}_{k-1}\|_2^2-\frac{\|\mathbf{A}_{:,\tau_k}^T\mathbf{A}\overline{\mathbf{x}}_{k-1}\|_2^4}{\sin^2(\vartheta)\|\mathbf{s}_{k}\|_2^2},\\
\end{aligned}
$$
where $\vartheta $ is the angle between the vectors  $\mathbf{s}_{k}$ and $\mathbf{A}\mathbf{P}_{k-1}\mathbf{\Theta}_{k-1} ^{-1}\mathbf{P}_{k-1}^T\mathbf{A}^T$. On the other hand, it follows that
\begin{equation}\label{3-4}
\begin{aligned}
\mathbb{E}_{\tau_k}[\|\mathbf{s}_k\|_2^2|\mathcal{F}_{k-1}] &=\mathbb{E}_{\tau_k}[\|\mathbf{A}_{:,\tau_k}\mathbf{A}_{:,\tau_k}^T\mathbf{A}\overline{\mathbf{x}}_{k-1}\|_2^2|\mathcal{F}_{k-1}]\\
&= \mathbb{E}_{\tau_k}\left[ \left\| \sum_{i \in \tau_k} (\mathbf{A}_{(i)}^T \mathbf{A}\overline{\mathbf{x}}_{k-1}) \mathbf{A}_{(i)} \right\|_2^2 \,\bigg|\, \mathcal{F}_{k-1} \right] \\
&= p_{ij}(i \in \tau_k, j \in \tau_k)\sum_{i=1}^n \sum_{j=1}^n (\mathbf{A}_{(i)}^T\mathbf{A}\overline{\mathbf{x}}_{k-1})(\mathbf{A}_{(j)}^T\mathbf{A}\overline{\mathbf{x}}_{k-1}) \mathbf{A}_{(i)}^T \mathbf{A}_{(j)}  \\
&=p_{i}(i \in \tau_k)\sum_{i=1}^m(\mathbf{A}_{(i)}^T \mathbf{A}\overline{\mathbf{x}}_{k-1})^2\|\mathbf{A}_{(i)}\|_2^2 \\
&\quad+ p_{ij}(i,j \in \tau_k)\sum_{i\neq j}(\mathbf{A}_{(i)}^T\mathbf{A}\overline{\mathbf{x}}_{k-1})(\mathbf{A}_{(j)}^T\mathbf{A}\overline{\mathbf{x}}_{k-1}) \mathbf{A}_{(i)}^T \mathbf{A}_{(j)} \\
&=l^2\|\mathbf{A}\mathbf{A}^T\mathbf{A}\overline{\mathbf{x}}_{k-1}\|_2^2+l(1-l)\lambda_{max}(\mathbf{A}^T\mathbf{A})\|\mathbf{A}^T\mathbf{A}\overline{\mathbf{x}}_{k-1}\|_2^2\\
&\leq l\lambda_{max}^2(\mathbf{A}^T\mathbf{A})\|\mathbf{A}\overline{\mathbf{x}}_{k-1}\|_2^2.
\end{aligned}
\end{equation}
The result follows from taking the conditional expectation of block $\tau_k$ over the entire history $\mathcal{F}_{k-1} = \{\Gamma_1,\cdots, \Gamma_{k-1}\}$ and \eqref{3-4}, which yields
$$
\begin{aligned}
\mathbb{E}_{\tau_k}[\|\mathbf{A}\overline{\mathbf{x}}_k\|_2^2|\mathcal{F}_{k-1}]&\leq \|\mathbf{A}\overline{\mathbf{x}}_{k-1}\|_2^2-\mathbb{E}_{\tau_k}\left[\frac{\|\mathbf{A}_{:,\tau_k}^T\mathbf{A}\overline{\mathbf{x}}_{k-1}\|_2^4}{\sin^2(\vartheta)\|\mathbf{s}_{k}\|_2^2}\Bigg| \mathcal{F}_{k-1}\right]\\
&\leq\|\mathbf{A}\overline{\mathbf{x}}_{k-1}\|_2^2-\frac{\mathbb{E}_{\tau_k}[\|\mathbf{A}_{:,\tau_k}^T\mathbf{A}\overline{\mathbf{x}}_{k-1}\|_2^4|\mathcal{F}_{k-1}]}{\sin^2(\vartheta)\mathbb{E}_{\tau_k}[\|\mathbf{s}_{k}\|_2^2|\mathcal{F}_{k-1}]}\\
&=\|\mathbf{A}\overline{\mathbf{x}}_{k-1}\|_2^2-\frac{(\sum_{i=1}^n p_i|r^{(i)}_{k-1}|^2)^2}{\sin^2(\vartheta)\mathbb{E}_{\tau_k}[\|\mathbf{s}_{k}\|_2^2|\mathcal{F}_{k-1}]}\\
&\leq\|\mathbf{A}\overline{\mathbf{x}}_{k-1}\|_2^2-\frac{l^2\lambda_{min}^2(\mathbf{A}^T\mathbf{A})\|\mathbf{A}\overline{\mathbf{x}}_{k-1}\|_2^4}{\sin^2(\vartheta)l\lambda_{max}^2(\mathbf{A}^T\mathbf{A})\|\mathbf{A}\overline{\mathbf{x}}_{k-1}\|_2^2}\\
&=\left(1-\frac{l\lambda_{min}^2(\mathbf{A}^T\mathbf{A})}{\sin^2(\vartheta)\lambda_{max}^2(\mathbf{A}^T\mathbf{A})}\right)\|\mathbf{A}\overline{\mathbf{x}}_{k-1}\|_2^2.
\end{aligned}
$$
Therefore, by taking the expectation over the entire history, the
proof follows immediately.


\end{proof}
\begin{rem}\label{Rem-7}
\rm The convergence factor generated by Theorem \ref{The-3-3} still satisfies $0\leq1-\frac{l\lambda_{min}^2(\mathbf{A}^T\mathbf{A})}{\sin^2(\vartheta)\lambda_{max}^2(\mathbf{A}^T\mathbf{A})}<1$, and the proof is similar to Lemma \ref{lem-5}, hence we omit it. Moreover, if the parameter $l=1$, then Algorithm \ref{algo-2-5}  reduces to the following PLSS residual least squares method.
\end{rem}

\begin{algorithm}
	\caption{RPLSS-LS}
	\label{algo-3-4}
	\begin{algorithmic}[1]
        \REQUIRE $\mathbf{A}, \mathbf{b},\mathbf{W}=(\mathbf{A}^T\mathbf{A})^{-1}$\\
        \ENSURE $\mathbf{x}_{k}$\\
        \STATE
       For $k=0, 1, 2,\cdots $, do:\\
 \STATE
   Compute 
   $$\mathbf{s}_k=\mathbf{A}\mathbf{r}_{k-1}$$
   where $\mathbf{r}_{k-1}=\mathbf{A}^T(\mathbf{b}-\mathbf{A}\mathbf{x}_{k-1})$
    \STATE
   Compute 
   $$\mathbf{p}_k=\gamma_{k-1}(\mathbf{W}\mathbf{A}^T\mathbf{s}_k+\frac{\|\mathbf{r}_{k-1}\|_2^2}{\|\mathbf{A}\mathbf{p}_{k-1}\|_2^2}\mathbf{p}_{k-1})$$
   where $\gamma_{k-1}=\frac{\mathbf{s}_k^T(\mathbf{b}-\mathbf{A}\mathbf{x}_{k-1})}{\mathbf{s}_k^T\mathbf{A}\mathbf{W}\mathbf{A}^T\mathbf{s}_k-\|\mathbf{d}_{k-1}\|_2^2}$ with $\mathbf{d}_{k-1}=\mathbf{\Theta}_{k-1} ^{-1/2}\|
   \mathbf{r}_{k-1}\|_2^2\mathbf{e}_{k-1}$ and $\mathbf{\Theta}_{k-1}=\mathbf{P}_{k-1}^T\mathbf{W}^{-1}\mathbf{P}_{k-1}$.
            \STATE
            Set $\mathbf{x}_{k} =\mathbf{x}_{k-1}+\mathbf{p}_k$.

	\end{algorithmic}  
\end{algorithm}

\section{Numerical Experiments}\label{section-4}

In this section, we record some numerical results to illustrate the feasibility and validity of the proposed methods compared with some state-of-the-art randomized solvers for consistent and inconsistent linear systems \eqref{1-1}. We first test the methods with randomly generated coefficient matrices or those coming from the University of Florida Sparse Matrix Collection. Then we present additional numerical results on the X-ray computed tomography problem to further illustrate the effectiveness of the proposed method in comparison with the solvers mentioned above. Denote by ``IT", the number of iterations, and by ``CPU", the elapsed computing time in seconds. For each method, we report the mean computing time in seconds and the mean number of iterations based on their average values of $5$ repeated tests. Additionally, all experiments have been carried out in MATLAB
2021b on a personal computer with Inter(R) Core(TM) i5-13500HX @2.5GHz and 32.00 GB memory. In Algorithms \ref{algo-2-1}, \ref{algo-2-2} and \ref{algo-2-5}, the parameter $l$ is set to $0.5$. The symbol $\dagger$ indicates that the solver fails to converge after reaching 1000 seconds. 
All test methods are as follows:\\
$\star$ GRK: Greedy randomized Kaczmarz method \cite{Bai}.\\
$\star$ RaBK: Randomized average block Kaczmarz method \cite{Necoara}.\\
$\star$ FGBK: Fast greedy block Kaczmarz method \cite{ZK}.\\
$\star$ RBK(50): Randomized block Kaczmarz method \cite{Needell2} with the block size set to be 50.\\
$\star$ REABK: Randomized extended average block Kaczmarz method\cite{KDu}.\\
$\star$ GBEK: Greedy block extended Kaczmarz method\cite{Ke}.\\
$\star$ RCD: Randomized coordinate descent method \cite{LeD}.\\
$\star$ GRCD: Greedy randomized coordinate descent method\cite{Bai4}.\\

\begin{example}\label{Example-1}
\rm Consider solving the Eq.\eqref{1-1} with its coefficient matrix given by the MATLAB function $sprandn$. Moreover, we randomly generate a solution $\mathbf{x}_\star$ by the MATLAB function $randn$, with $\mathbf{x}_\star=randn(n,1)$. For consistent linear systems, the right-hand side vector $\mathbf{b}$ is constructed as $\mathbf{b}=\mathbf{A}\mathbf{x}_\star$. For inconsistent linear systems, we set $\mathbf{b}=\mathbf{A}\mathbf{x}_\star+r$, where $r$ is a nonzero vector belonging to the null space of $\mathbf{A}^T$. The stopping criteria of consistent linear systems is the error norm at the current iterate $\mathbf{x}_{k}$ satisfying 
$$ \|\mathbf{x}_{k}-\mathbf{x}_{\star} \| _{2}<10^{-6},$$ and the stopping criteria of inconsistent linear systems is the relative error norm at the current iterate $\mathbf{x}_{k}$ satisfying 
$$\mathrm{RSE}:= \frac{\|\mathbf{x}_k-\mathbf{x}_{\star}\|_2^2}{\|\mathbf{x}_{\star}\|_2^2}< 10^{-6}.$$
For the RaBK solver, we set $\tau=m/\|\mathbf{A}\|_2^2$ and $\alpha=1.95L_k$ with $$L_k = \frac{\sum_{i\in \tau_k} \bar{\omega}_i^k (
(\mathbf{A}^{(i)})^T \mathbf{x}^k - \mathbf{b}^{(i)})^2}{\left\| \sum_{i\in \tau_k} \bar{\omega}_i^k(
(\mathbf{A}^{(i)})^T \mathbf{x}^k - \mathbf{b}^{(i)}) \mathbf{A}^{(i)} \right\|^2}.$$

We implement all tested methods to solve consistent linear systems \eqref{1-1}, and then record the obtained numerical results in Tables \ref{Table-1} and \ref{Table-2}. A rough conclusion drawn from these tables is that, RPLSS-GK significantly outperforms GRK, RaBK, FGBK, and RBK(50) in all tests of consistent linear systems (overdetermined, underdetermined, and of different scales), making it the most competitive algorithm in the current table. The computation time of RPLSS-GK is about $2-3$ times faster than FGBK, and $1-2$ orders of magnitude faster than RaBK and RBK(50). Especially in ultra-large-scale problems with millions of rows, RPLSS-GK only takes $3-20$ seconds, while other methods take tens to hundreds of seconds. This demonstrates RPLSS-GK is more favorable for big data problems. {The convergence curves of all methods with $300000\times 40000$ and $50000\times 100000$ were displayed in Fig.\ref{Fig:1}. It can be clearly seen from the figure that RPLSS-GK converges faster than other existing methods.}

\begin{small}
\begin{table}[!htbp]
    \centering
    \caption{Numerical results  of Example \ref{Example-1} when Eq.\eqref{1-1} is consistent  \label{Table-1}}
    \setlength{\tabcolsep}{0.7mm}
    \begin{tabular}{|l|c|c|c|c|c|}
    \hline
        \multicolumn{2}{|c|}{$ m \times n$}  & $20000\times5000$ & $300000\times40000$ & $600000\times50000$ & $1000000\times100000$ \\ \hline
         & IT   & 7896 & $\dagger$ & $\dagger$ & $\dagger$ \\ 
         GRK & CPU  & 16.0886 & $\dagger$ & $\dagger$ & $\dagger$\\ \hline

         & IT  & 1022 & 1082 & 846 & 958 \\ 
       RaBK& CPU & 0.3399 & 11.0386 & 17.0775 & 20.9910\\ \hline

         & IT  & 31 & 84 & 60 & 53\\ 
        FGBK & CPU & 0.0929 & 2.7069 & 6.0688 & 9.6621 \\ \hline

         & IT  & 73 & 286 & 155 & 38\\ 
        RBK(50)& CPU  & 0.1363 & 7.4937 & 5.7692 & 30.8101 \\  \hline

        & IT  & 17 & 29 & 22 & 18  \\ 
       RPLSS-GK  & CPU  & \textbf{0.0322} & \textbf{0.8366} & \textbf{2.3663} & \textbf{3.4091}\\  \hline
    \end{tabular}
\end{table}
\end{small}

\begin{small}
\begin{table}[!htbp]
    \centering
    \caption{Numerical results  of Example \ref{Example-1} when Eq.\eqref{1-1} is consistent  \label{Table-2}}
    \setlength{\tabcolsep}{0.7mm}
    \begin{tabular}{|l|c|c|c|c|c|}
    \hline
        \multicolumn{2}{|c|}{$ m \times n$}  & $5000\times10000$ & $50000\times100000$ & $100000\times250000$ & $200000\times500000$ \\ \hline
         & IT   & 269100 & $\dagger$ & $\dagger$ & $\dagger$ \\ 
         GRK & CPU  & 813.1132 & $\dagger$ & $\dagger$ & $\dagger$\\ \hline

         & IT  & 1119 & 1178 & 1211 & 1212 \\ 
       RaBK& CPU & 4.2261 & 22.0106 & 98.5854 & 375.4899\\ \hline

         & IT  & 435 & 641 & 504 & 254 \\ 
        FGBK & CPU & 0.7145 & 5.2255 & 19.7438 & 43.5896 \\ \hline

         & IT  & 8540 & 9081 & 5965 & 5973 \\ 
        RBK(50)& CPU  & 30.7323 & 160.3646 & 250.6066 & 446.2368 \\  \hline

        & IT  & 127 & 143 & 112 & 112 \\ 
       RPLSS-GK  & CPU  & \textbf{0.2353} & \textbf{1.7733} & \textbf{5.0777} & \textbf{20.3680}\\  \hline
    \end{tabular}
\end{table}
\end{small}

\begin{figure}[htbp]
\begin{center}
	\begin{minipage}[c]{1\textwidth}
		\includegraphics[width=2.4in]{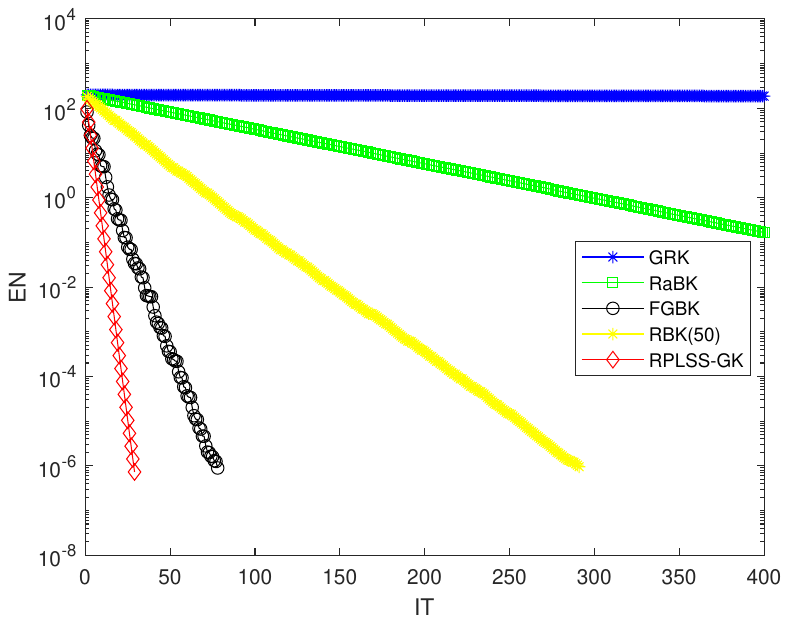} 
		\includegraphics[width=2.4in]{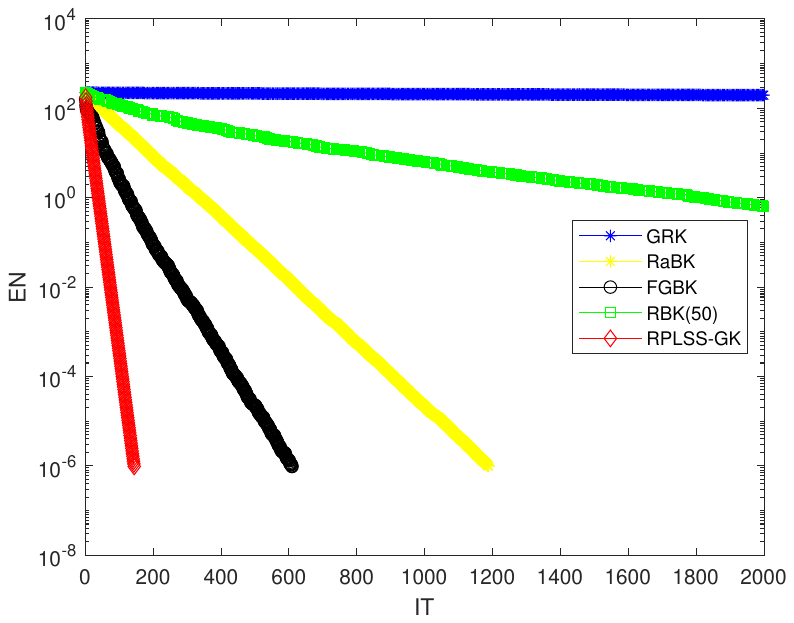}
	\end{minipage}
	\caption{ {Convergence curves of Example \ref{Example-1} with the consistent cases $300000\times 40000$ (left) and $50000\times 100000$ (right).}}\label{Fig:1}
 \end{center}
\end{figure}

For the inconsistent case, we report the numerical results regarding the iterations and computing time in Table \ref{Table-3}. As shown in the table, RPLSS-GCD and RPLSS-GEK converge stably in all test scales, and the number of iterations and computing time are much better than other methods, especially showing great application potential in very large-scale problems. Among traditional methods, only the GBEK method converges on all scale matrices, the REABK algorithm fails on the largest scale case, and RCD and GRCD also fail at three and two cases, respectively. {Moreover, we displayed
the convergence curves of all methods with the case $m=10000$ and $n=2000$ in Fig.\ref{Fig:2}, which coincide with the above observations.}

\begin{small}
\begin{table}[!htbp]
    \centering
    \caption{Numerical results  of Example \ref{Example-1} when Eq.\eqref{1-1} is inconsistent  \label{Table-3}}
    \setlength{\tabcolsep}{0.4mm}
    \begin{tabular}{|l|c|c|c|c|c|}
    \hline
        \multicolumn{2}{|c|}{$ m \times n$}  & $10000\times2000$ & $50000\times20000$ & $200000\times80000$ & $1000000\times100000$ \\ \hline
         & IT   & 5855 &29953& 78568 & $\dagger$ \\ 
         REABK & CPU  & 2.1651 & 34.0505& 232.9871& $\dagger$\\ \hline

         & IT  & 274& 5920 & 818  & 175 \\ 
       GBEK& CPU & 0.1826&  20.8799& 34.1692 & 51.2237\\ \hline

         & IT  & 54282 &$\dagger$ & $\dagger$ & $\dagger$\\ 
        RCD & CPU &  14.4246&  $\dagger$& $\dagger$&$\dagger$ \\ \hline

         & IT  & 12708& 280691 & $\dagger$ & $\dagger$\\ 
        GRCD& CPU  & 3.8332 &  790.5998& $\dagger$ & $\dagger$ \\  \hline

        & IT  & 2000 &$\dagger$ & $\dagger$& $\dagger$  \\ 
       RPLSS-CD(10)  & CPU  & 3.7865 &$\dagger$ &$\dagger$& $\dagger$\\  \hline

        & IT & 30 & 54 & 41 & 23\\ 
        RPLSS-GCD & CPU  & \textbf{0.0193} &  \textbf{0.1991}& \textbf{1.4567} & \textbf{6.6872} \\ \hline
        
         & IT  & 40 &113& 65&  25  \\ 
        RPLSS-GEK& CPU  & 0.0457 &  0.6718 & 2.6305 & 9.4435 \\  \hline

    \end{tabular}
\end{table}
\end{small}

\begin{figure}[!htbp]
	\begin{center}
		\begin{minipage}[c]{0.6\textwidth}
			\includegraphics[width=2.8in]{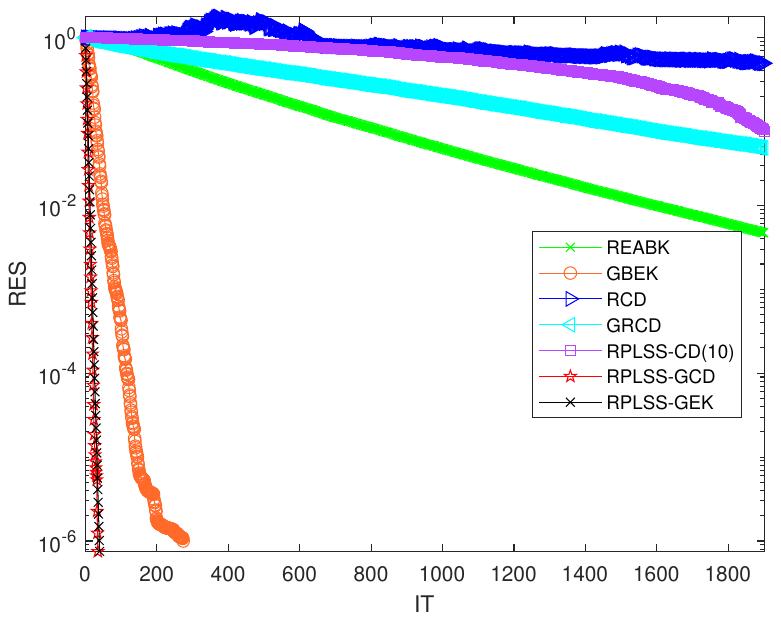}
		\end{minipage}\hspace{3.2em}
	\end{center}
	\vspace{0em}\caption{{Convergence curves of Example \ref{Example-1} with the inconsistent case $10000\times 2000$.}\label{Fig:2}}
\end{figure}

\end{example}

\begin{example}\label{Example-2}
\rm Consider solving the consistent linear system \eqref{1-1} with its coefficient matrix coming from the University of Florida Sparse Matrix Collection \cite{Davis}. The chosen matrices are mainly used for statistical, mathematical, combinatorial, and linear programming problems. Moreover, we randomly generate a solution $\mathbf{x}_\star$ by the MATLAB function $randn$, with $\mathbf{x}_\star=randn(n,1)$, such that the right-hand side vector $\mathbf{b}$ is set to be $\mathbf{A}\mathbf{x}_\star$. The stopping criteria of all tested methods are the error norm at the current iterate $\mathbf{x}_{k}$ satisfying 
$$ \|\mathbf{x}_{k}-\mathbf{x}_{\star} \| _{2}<10^{-6}.$$

\begin{small}
\begin{table}[!htbp]
    \centering
    \caption{Numerical results  of Example \ref{Example-2}  \label{Table-4}}
    \setlength{\tabcolsep}{0.3mm}
    \begin{tabular}{|l|c|c|c|c|c|}
    \hline
        \multicolumn{2}{|c|}{Matrix} & Franz11 & GL7d14 & relat8 & testbig  \\ \hline
        \multicolumn{2}{|c|}{$ m \times n$} & $47104\times30144$ & $171375\times47271$ & $345688\times12347$ & $17613\times31223$ \\ \hline
        \multicolumn{2}{|c|}{Density}  & 0.02\% & 0.02\% & 0.03\% & 0.01\%  \\ \hline

        GRK & IT & $\dagger$ & $\dagger$  & $\dagger$  & 1113996  \\ 
        ~ & CPU & $\dagger$ & $\dagger$ & $\dagger$ & 971.3152 \\  \hline

        RaBK & IT &  20122 & 98756  & 87272   & $\dagger$ \\ 
        ~ & CPU & 11.4521 & 87.3713 & 38.6179& $\dagger$ \\  \hline

        FGBK & IT & 484 & 1541  & 1766 & 166061 \\ 
        ~ & CPU & 0.6100 & 9.2269 & 12.3514 & 81.9880 \\  \hline

        RBK(50) & IT & 1912 & 4083  & 1453 & 228235\\ 
        ~ & CPU & 11.8855 & 70.2431 & 16.8130 & 834.0728 \\  \hline

        RPLSS-GK & IT & 81 &98& 107 & 50\\ 
        ~ & CPU & \textbf{0.2017} & \textbf{0.9340} & \textbf{1.3551} & \textbf{0.7952} \\ \hline

    \end{tabular}
\end{table}
\end{small}

\begin{small}
\begin{table}[!htbp]
    \centering
    \caption{Numerical results  of Example \ref{Example-2}  \label{Table-5}}
    \setlength{\tabcolsep}{0.1mm}
    \begin{tabular}{|l|c|c|c|c|c|}
    \hline
        \multicolumn{2}{|c|}{Matrix} &sls & kneser\_10\_4\_1 & degme & ins2  \\ \hline
        \multicolumn{2}{|c|}{$ m \times n$} & $1748122\times62729$ & $349651\times330751$ & $185501\times659415$ & $309412\times309412$ \\ \hline
        \multicolumn{2}{|c|}{Density}  & 0.006\% & 0.0009\% & 0.007\% & 0.003\%  \\ \hline
        GRK & IT & $\dagger$ & $\dagger$  & $\dagger$  & $\dagger$ \\ 
        ~ & CPU & $\dagger$ & $\dagger$ & $\dagger$ & $\dagger$ \\  \hline

        RaBK & IT & $\dagger$ & $\dagger$  & 800  & $\dagger$ \\ 
        ~ & CPU & $\dagger$ & $\dagger$ & 45.0287& $\dagger$ \\  \hline

        FGBK & IT & $\dagger$ & 14914  & 2622 & $\dagger$ \\ 
        ~ & CPU & $\dagger$ & 200.3864 & 43.8835 & $\dagger$ \\  \hline

        RBK(50) & IT & 429 & $\dagger$  & 2554 & $\dagger$\\ 
        ~ & CPU & 24.2707 & $\dagger$ & 160.1261 & $\dagger$ \\  \hline

        RPLSS-GK & IT & 290 &804& 103 & 381\\ 
        ~ & CPU & \textbf{14.2922} & \textbf{39.6706} & \textbf{3.2724} & \textbf{11.6757} \\ \hline

    \end{tabular}
\end{table}
\end{small}

\begin{figure}[htbp]
\begin{center}
	\begin{minipage}[c]{0.95\textwidth}
		\includegraphics[width=2.4in]{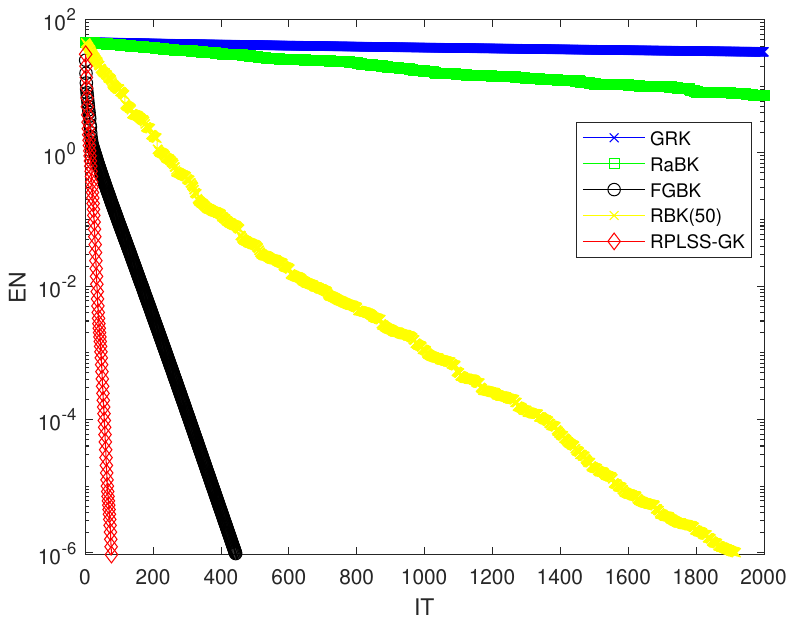}\
		\includegraphics[width=2.4in]{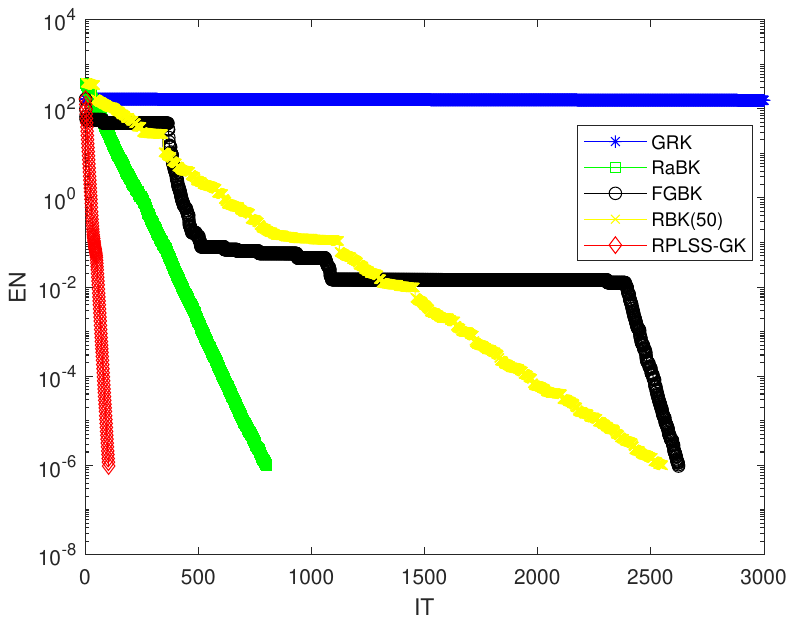}
	\end{minipage}
	\caption{{Convergence curves of Example \ref{Example-2} with the cases Franz11 (left) and degme (right).}}\label{Fig:3}
 \end{center}
\end{figure}

We run the test methods on some practical problems, and then list the numerical results in Tables \ref{Table-4} and \ref{Table-5}. As shown in these tables, GRK fails to converge in most cases. Although RaBK, FGBK, and RBK converge on some problems, the number of iterations and computing time required are much higher than those of RPLSS-GK. Traditional Kaczmarz-type methods generally fail on very large-scale matrices (sls, ins2) and some singular matrices, while RPLSS-GK converges stably on all test matrices, showing stronger robustness. A rough conclusion is that the RPLSS-GK method significantly outperforms the other solvers in terms of the number of iterations and CPU time, especially for ill-conditioned and large-scale sparse matrices, where its advantages are even more pronounced. Particularly, the speed-up of RPLSS-GK compared with RaBK is at least $13.71$ in the case of degme, and at most $93.54$ in the case of GL7d14, compared with FGBK is at least $3.02$ in the case of Franz11, and at most $103.10$ in the case of testbig, and compared with RBK is at least $1.69$ in the case of sls, and at most $1048.88$ in the case of testbig.  {Fig.\ref{Fig:3} plots the convergence curves of all tested methods under the Franz11 and degme test cases. It is evident that the proposed RPLSS-GK solver achieves superior convergence performance compared with the other randomized solvers, consistent with the aforementioned observations.}
\end{example}

\begin{example}\label{Example-3}
\rm Consider solving the inconsistent linear system \eqref{1-1} with its coefficient matrix coming from the University of Florida Sparse Matrix Collection \cite{Davis}. The chosen matrices are mainly used for combinatorial and linear programming problems. 
Moreover, we randomly generate a solution $\mathbf{x}_\star$ by the MATLAB function $randn$, with $\mathbf{x}_\star=randn(n,1)$, such that the right-hand side vector $\mathbf{b}$ is set to be $\mathbf{A}\mathbf{x}_\star+r$, where $r\in $ null$(\mathbf{A}^T)$. The stopping criteria of all tested methods are the relative error norm at the current iterate $\mathbf{x}_{k}$ satisfying 
$$\mathrm{RSE}:= \frac{\|\mathbf{x}_k-\mathbf{x}_{\star}\|_2^2}{\|\mathbf{x}_{\star}\|_2^2}< 10^{-6}.$$


\begin{table}[!htbp]
    \centering
    \caption{Numerical results  of Example \ref{Example-3}   \label{Table-6}}
    \setlength{\tabcolsep}{1.6mm}
    \begin{tabular}{|l|c|c|c|c|c|c|}
    \hline
        \multicolumn{2}{|c|}{Matrix} & ch7-9-b5 & D6-6&n4c6-b7 &cage12& abtaha2\\ \hline
        \multicolumn{2}{|c|}{$ m $} & $423360$ & $120576$ & $ 163215$ & $130228$ &$37932$\\ \hline
        \multicolumn{2}{|c|}{$n$} & $317520$ & $23740$ & $ 104115$ & $130228$ &$331$\\ \hline
        \multicolumn{2}{|c|}{Density}  & 0.12\% & 0.36\% & 0.01\% & 0.01\%  &1.09\%\\ \hline
        REABK & IT & 39233 & 40961&6778 & 737472&  27128\\ 
        ~ & CPU & 353.2802 & 61.2662 &  13.6403 &  956.9433&  12.8866 \\ \hline
        GBEK & IT & 359 &871 &117 &547& 925\\ 
        ~ & CPU & 6.4408 & 2.8610 & 0.9547 &  4.6600&  1.0560 \\ \hline

        RPLSS-GEK & IT & 33 & 113&19 & 160& 285 \\ 
        ~ & CPU & \textbf{0.9433}& \textbf{0.6353} & \textbf{0.2159} & \textbf{1.5078}& \textbf{0.5514}\\ \hline

    \end{tabular}
\end{table}

\begin{figure}[htbp]
\begin{center}
	\begin{minipage}[c]{0.95\textwidth}
		\includegraphics[width=2.4in]{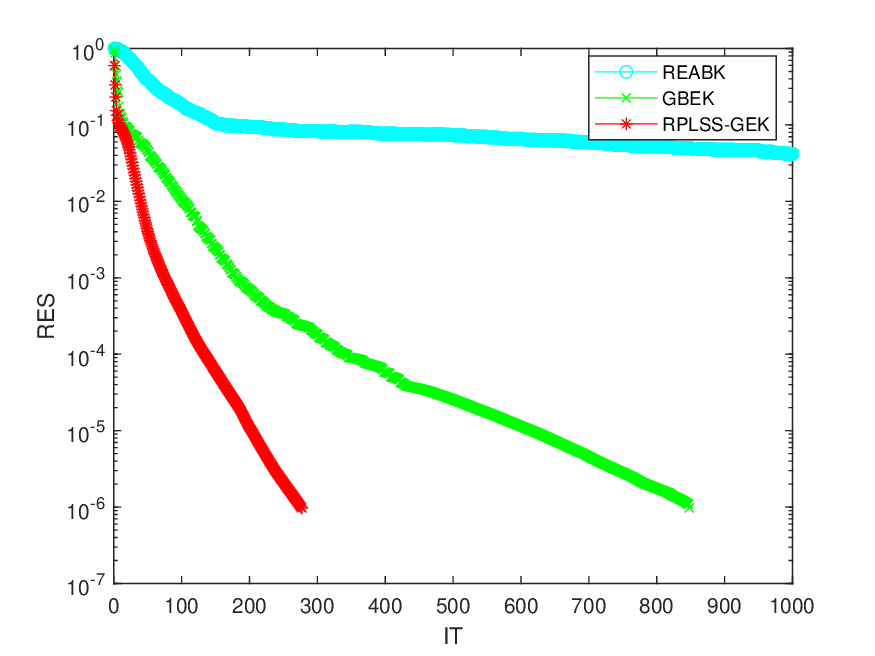}\
		\includegraphics[width=2.44in]{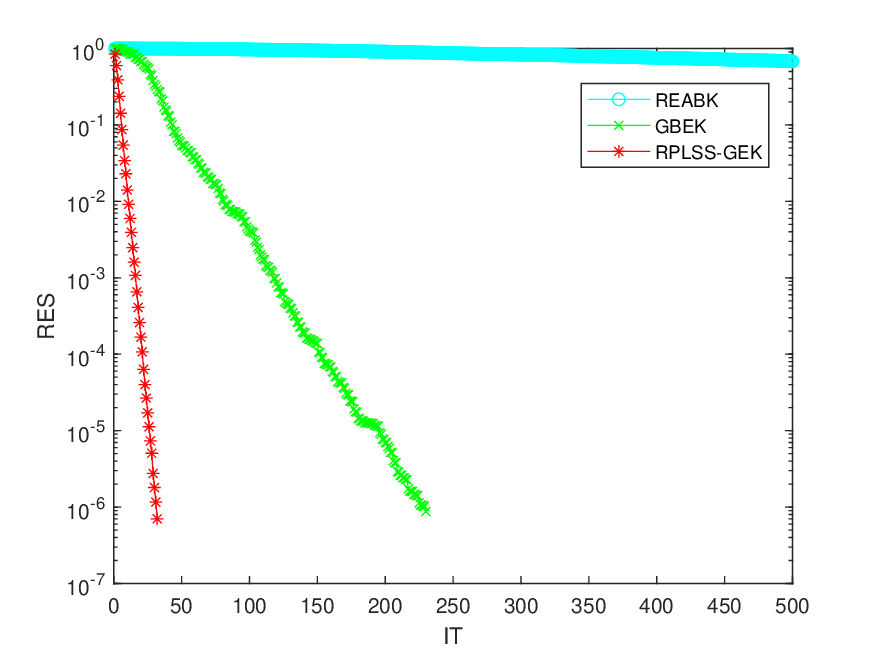}
	\end{minipage}
	\caption{{Convergence curves of Example \ref{Example-3} with the cases abtaba2 (left) and lp\_nug20 (right).}}\label{Fig:4}
 \end{center}
\end{figure}

In Tables \ref{Table-6}-\ref{Table-7}, we report the numerical results regarding the iterations and computing time. As shown in these tables, one can reasonably conclude that the RPLSS-GEK solver outperforms all other tested solvers. 
RPLSS-GEK converges stably on all test matrices, with significantly fewer iterations and shorter computation time compared to existing methods like REABK and GBEK. It demonstrates excellent robustness and efficiency, especially for highly ill-conditioned matrices or ultra-large-scale sparse matrices (such as Delor64K). More precisely, compared with the REABK solver, RPLSS-GEK brings an amazing speed-up of $23.37$ times to $634.66$ times. Compared with the efficient GBEK solver, RPLSS-GEK also maintains a stable acceleration of $1.91$ to $8.63$ times.  {Additionally, we depicted the convergence curves of all tested methods with the cases abtaba2 and lp\_nug20 in Fig.\ref{Fig:4}. As seen, the RPLSS-GEK solver still work more effectively than the REABK and GBEK solvers, respectively, which coincides with the abovementioned conclusion.}


\begin{small}
\begin{table}[!htbp]
    \centering
    \caption{Numerical results  of Example \ref{Example-3}   \label{Table-7}}
    \setlength{\tabcolsep}{1.6mm}
    \begin{tabular}{|l|c|c|c|c|c|c|}
    \hline
        \multicolumn{2}{|c|}{Matrix} & lp\_nug20 & bibd\_49\_3&GL7d24 &rlfdual& Delor64K\\ \hline
        \multicolumn{2}{|c|}{$ m $} & $15240$ & $1176$ & $ 21074$ & $8052$ &$64719$\\ \hline
        \multicolumn{2}{|c|}{$n$} & $72600$ & $18424$ & $ 105054$ & $74970$ &$1785345$\\ \hline
        \multicolumn{2}{|c|}{Density}  & 0.03\% & 2.55\% & 0.03\% & 0.05\%  &0.0006\%\\ \hline
        REABK & IT & 7632 & 1433 &135167 & 67152& $\dagger$\\ 
        ~ & CPU & 11.6339 & 0.6571 & 130.6364 &  130.6364&  $\dagger$ \\ \hline
        GBEK & IT & 211 & 52 &8072 &1319& $\dagger$\\ 
        ~ & CPU & 0.4528 & 0.0586 & 32.0111 &  3.3230&  $\dagger$ \\ \hline

        RPLSS-GEK & IT & 32 & 17 &1543& 722& 341 \\ 
        ~ & CPU & \textbf{0.1067}& \textbf{0.0197} & \textbf{3.7095}& \textbf{1.2273}& \textbf{9.7724}\\ \hline

    \end{tabular}
\end{table}
\end{small}

\end{example}

\begin{example}\label{Example-4}
\rm Consider solving the problem of X-ray computed tomography, which is one of the classic application areas of the Kaczmarz method. The Matlab package AIR Tools II \cite{Hansen} is used to simulate the above problems, so as to verify the effectiveness and performance of the proposed method for different linear systems. In this section, two sets of comparative experiments are designed, corresponding to consistent and inconsistent linear systems, respectively. First, we generated a test image of size 256 by 256 based on the $paralleltomo$ function in the AIR Tools II package and recorded the measurements of the angles 0:0.5:179. The number of sensors per projection line is 280. The right-hand side vector is
$\mathbf{b}=\mathbf{A}\mathbf{x}_{\star}$, where the unique solution of the consistent linear systems \eqref{1-1} is obtained by reshaping the $256\times256$ Shepp-Logan medical phantom. We verified the effectiveness of the algorithms under three scenarios: complete projection data, $10\%$ missing projection data, and $30\%$ missing projection data. The stopping criteria of all tested methods are the relative error norm at the current iterate $\mathbf{x}_{k}$ satisfying 
$$\mathrm{RSE}:= \frac{\|\mathbf{x}_k-\mathbf{x}_{\star}\|_2^2}{\|\mathbf{x}_{\star}\|_2^2}< 10^{-2}.$$

In Tables \ref{Table-8}-\ref{Table-10}, we list the number of iteration steps, the computing times, and the relative solution errors for all tested methods. It appears from this table that the RPLSS-GK significantly outperforms GRK, RaBK, FGBK, and RBK(50) in terms of the number of iterations and computation time. Especially for large-scale sparse matrices, its speedup can reach tens or even hundreds of times. As matrix row information gradually becomes missing, the number of iterations and iteration time of other methods increase sharply, while the number of iterations of RPLSS-GK increases slowly and the iteration time remains basically unchanged. This indicates that RPLSS-GK has good adaptability to missing matrix rows and exhibits good robustness. This is because RPLSS-GK constructs a sketch matrix using historical residuals, which has a natural resistance to interference in row sampling, and can still maintain efficient convergence even when $30\%$ of the matrix rows are missing. Therefore, RPLSS-GK is particularly suitable for solving large-scale linear systems with incomplete rows and uneven sampling, which are common in real-world data. The pictures of projection data are displayed in Figs.\ref{Fig:5}-\ref{Fig:7}.

\begin{table}[!htbp]
    \centering
    \caption{Numerical results  of Example \ref{Example-4} for $100520\times65536$ matrix $A$   \label{Table-8}}
    \begin{tabular}{|l|c|c|c|c|}
    \hline
        \multicolumn{2}{|c|}{Algorithm} & IT & CPU &RSE \\ \hline
        \multicolumn{2}{|c|}{GRK} &  20389 & $\dagger$ & 1.91e-2\\ 
        \multicolumn{2}{|c|}{RaBK} & 13439 & 23.1365 & 9.99e-3\\ 
        
        \multicolumn{2}{|c|}{FGBK}  &  123& 5.5835 & 9.99e-3 \\

        \multicolumn{2}{|c|}{RBK(50)}  & 123& 7.0292 & 9.22e-3 \\

        \multicolumn{2}{|c|}{RPLSS-GK}  & 26 & \textbf{1.1416} & 9.58e-3\\ \hline

    \end{tabular}
\end{table}

\begin{table}[!htbp]
    \centering
    \caption{Numerical results  of Example \ref{Example-4} for $90\%$ $100520\times65536$  matrix $A$   \label{Table-9}}
    \begin{tabular}{|l|c|c|c|c|}
    \hline
        \multicolumn{2}{|c|}{Algorithm} & IT & CPU &RSE \\ \hline
        \multicolumn{2}{|c|}{GRK} &  22343  & $\dagger$ & 1.98e-2\\ 
        
        \multicolumn{2}{|c|}{RaBK} &  17002 & 38.2633 & 9.99e-3\\ 
        
        \multicolumn{2}{|c|}{FGBK}  &  184 & 6.9108 & 9.99e-3 \\

        \multicolumn{2}{|c|}{RBK(50)}  & 263& 13.4510 & 9.99e-3 \\

        \multicolumn{2}{|c|}{RPLSS-GK}  &29& \textbf{1.0590} & 9.68e-3\\ \hline

    \end{tabular}
\end{table}

\begin{table}[!htbp]
    \centering
    \caption{Numerical results  of Example \ref{Example-4} for $70\%$ $100520\times65536$ matrix $A$   \label{Table-10}}
    \begin{tabular}{|l|c|c|c|c|}
    \hline
        \multicolumn{2}{|c|}{Algorithm} & IT & CPU &RSE \\ \hline
        \multicolumn{2}{|c|}{GRK} &  27802  & $\dagger$ & 2.27e-2\\ 
        
        \multicolumn{2}{|c|}{RaBK} &24596 & 50.5284 & 9.99e-3\\ 
        
        \multicolumn{2}{|c|}{FGBK}  &  581 & 15.7611 & 9.99e-3 \\

        \multicolumn{2}{|c|}{RBK(50)}  & 681& 28.7061 & 9.99e-3 \\

        \multicolumn{2}{|c|}{RPLSS-GK}  & 42& \textbf{1.2160} & 9.94e-3 \\ \hline

    \end{tabular}
\end{table}

 \begin{figure}[htbp!]
\begin{center}
	\begin{minipage}[c]{0.9\textwidth}
		\includegraphics[width=4.5in]{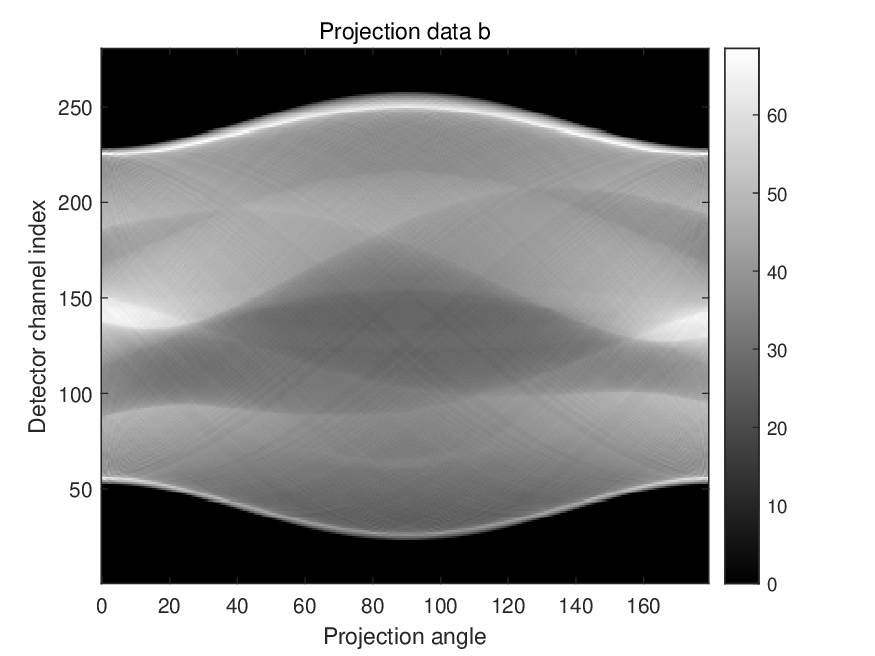}
	\end{minipage}
\end{center}
	\vspace{-1em}\caption{Pictures of the full projection data $b$ \label{Fig:5}}
\end{figure}

 \begin{figure}[htbp!]
\begin{center}
	\begin{minipage}[c]{0.9\textwidth}
		\includegraphics[width=4.5in]{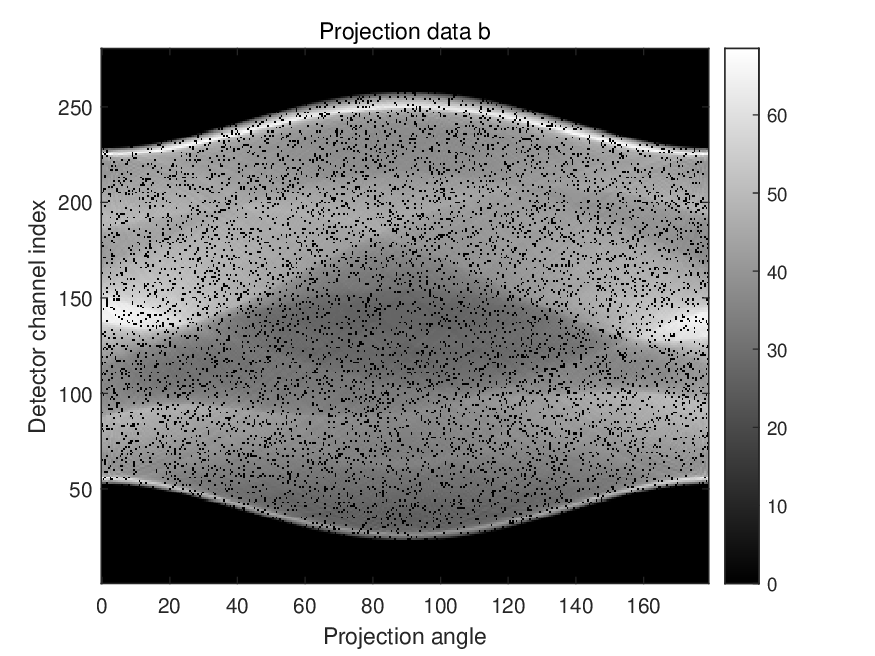}
	\end{minipage}
\end{center}
	\vspace{-1em}\caption{Pictures of the $90\%$ projection data $b$ \label{Fig:6}}
\end{figure}

 \begin{figure}[htbp!]
\begin{center}
	\begin{minipage}[c]{0.9\textwidth}
		\includegraphics[width=4.5in]{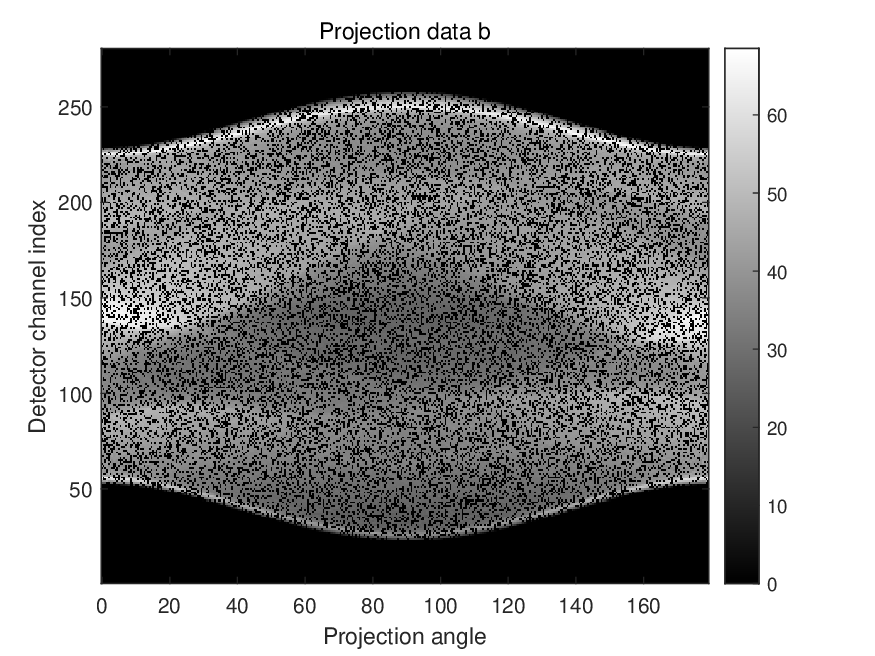}
	\end{minipage}
\end{center}
	\vspace{-1em}\caption{Pictures of the $70\%$ projection data $b$ \label{Fig:7}}
\end{figure}

Then, we generated a test image of size 128 by 128 based on the $paralleltomo$ function in the AIR Tools II package and recorded the measurements of the angles 0:0.5:179. The number of sensors per projection line is 140. The right-hand side vector is
$\mathbf{b}=\mathbf{A}\mathbf{x}_{\star}+r$, where the unique solution of the inconsistent linear systems \eqref{1-1} is obtained by reshaping the $128\times128$ Shepp-Logan medical phantom and $r$ is a nonzero vector in the null space generated by the MATLAB function $null$.

In Table \ref{Table-11}, we list the number of iteration steps, the computing times, and the relative solution errors for all tested methods. As was shown, RPLSS-GCD and RPLSS-GEK
methods show better performance than REABK, GBEK, RCD, GRCD, and PLSSRCD(10) in terms of computing time. RCD, GRCD, and RPLSS-CD (10) all failed to converge or had insufficient accuracy. RPLSS-GCD achieves extreme computational efficiency and robustness in ultra-large ill-conditioned problems, making it the best method in this test.

\begin{table}[!htbp]
    \centering
    \caption{Numerical results  of Example \ref{Example-4} for $50260\times16384$ matrix $A$   \label{Table-11}}
    \begin{tabular}{|l|c|c|c|c|}
    \hline
        \multicolumn{2}{|c|}{Algorithm} & IT & CPU &RSE \\ \hline

        \multicolumn{2}{|c|}{REABK} &  18963  & 21.8153 & 9.99e-3\\

        \multicolumn{2}{|c|}{GBEK} &  453  & 8.3650& 9.99e-3\\

        \multicolumn{2}{|c|}{RCD} &  82109  & $\dagger$ & 5.96e+2\\ 
        
        \multicolumn{2}{|c|}{GRCD}  &  77987 & $\dagger$ & 1.04e-1 \\

        \multicolumn{2}{|c|}{RPLSS-CD(10)}  & 15932& $\dagger$ & 1.57e-1 \\

        \multicolumn{2}{|c|}{RPLSS-GCD}  & 27 & \textbf{0.5096} & 9.66e-3\\

        \multicolumn{2}{|c|}{RPLSS-GEK} & 240 & 5.0662 &9.99e-3  \\ \hline

    \end{tabular}
\end{table}

\end{example}
\section{Conclusions}
\label{section-5}

In this paper, we propose a randomized projected linear system solver framework to solve Eq. \eqref{1-1}. Unlike the PLSS method that requires access to the whole coefficient matrix in each iteration, our method utilizes a random row or column selection strategy, using only partial information in each iteration. This significantly reduces memory requirements and computational cost per iteration, making the proposed method particularly suitable for large-scale, memory-constrained, or data-streaming applications. The main contributions are listed as follows.\\
\begin{itemize}
    \item For consistent systems, we proposed the RPLSS with the Gaussian Kaczmarz (RPLSS‑GK) method, which constructs each sketch column as a random linear combination of residuals and standard basis vectors. This design preserves the low memory footprint of Kaczmarz‑type iterations while inheriting the finite-termination property of PLSS. Numerical experiments demonstrate that RPLSS‑GK is remarkably robust even under row‑missing scenarios.
    \item Inspired by the double‑sequence idea of the randomized extended Kaczmarz method, we derived an extended version of RPLSS‑GK to handle inconsistent systems. This extension achieves expected linear convergence to the least‑squares solution without requiring any row or column paving strategies, making it both theoretically rigorous and practically efficient.

    \item Additionally, we devise RPLSS‑CD for inconsistent systems by selecting columns with probabilities proportional to squared norms, accumulating historical updates to overcome slow single‑component convergence. 
    \item Finally, to further balance local coordinate updates and global residual feedback, we developed the RPLSS‑GCD method. RPLSS-GCD achieves a clever trade-off by flexibly combining the current residual with randomly selected standard basis vectors, significantly accelerating the convergence speed of ill-conditioned and inconsistent problems.
\end{itemize}

\bigskip
\noindent\textbf{Contributions:} 
{All authors contributed equally to Conceptualization, Formal analysis, Investigation, Methodology, Software, Validation, Writing an original draft, Writing, Review, and Editing. All authors have read and agreed to the published version
of the manuscript.}
\vspace{2mm}

\noindent\textbf{Data Availability Statement:} {The data used to support the findings of this study are included within this article.}

\noindent\textbf{Conflict of Interest:} {The authors declare no conflict of interest.}

\appendix
	\section{Details of $\bar{\mathbf{p}}_k$}\label{App-1}    
    
\vspace{2mm}    
Consider the 
constrained optimization problem
\begin{equation}
\min_{\bar{\mathbf{p}}_k\in\mathbb{R}^n}\frac{1}{2}\|\mathbf{B\bar{\mathbf{p}}_k}\|_2^2-\mathbf{r}_{k-1}^T\mathbf{A}^T\bar{\mathbf{p}}_k
\end{equation}
\begin{equation}
\mbox{subject\ to}\quad\bar{\mathbf{S}}_k^T\mathbf{A}^T\mathbf{z}_k=0,
\end{equation}
where $\mathbf{r}_{k-1}=\mathbf{A}^T\mathbf{z}_{k-1}$, or equivalently,
\begin{equation}\label{0000}
\begin{bmatrix}
\mathbf{B}^T\mathbf{B} & \mathbf{A}\bar{\mathbf{S}}_k \\
\bar{\mathbf{S}}_k^T\mathbf{A}^T & \mathbf{0}_{k\times k}
\end{bmatrix}
\begin{bmatrix}
\bar{\mathbf{p}}_k \\
\boldsymbol{\lambda}_k
\end{bmatrix}
=
\begin{bmatrix}
\mathbf{A} \\
\bar{\mathbf{S}}_k^T
\end{bmatrix}
\left( \mathbf{A}^T\mathbf{z}_{k-1}\right).
\end{equation}
For the first term of Eq.\eqref{0000}, the update $\bar{\mathbf{p}}_k$ is of the form $$\bar{\mathbf{p}}_k=\mathbf{W}\mathbf{A}\bar{\mathbf{S}}_k(\bar{\mathbf{S}}_k^T\mathbf{A}^T\mathbf{W}\mathbf{A}\bar{\mathbf{s}}_k)^{-1}\bar{\mathbf{S}}_k^T\mathbf{A}^T\mathbf{z}_{k-1}.$$
Similar to the update ${\mathbf{p}}_k$ of PLSS in \cite{Bjj}, this  update can be rewritten as $$\bar{\mathbf{p}}_k=\bar{\mathbf{P}}_{k-1}\bar{\mathbf{g}}_{k-1}+\bar{\mathbf{\gamma}}_{k-1}\bar{\mathbf{y}}_k,$$ where $\bar{\mathbf{y}}_k=\mathbf{A}\bar{\mathbf{s}}_k$. By the
 orthogonality $\bar{\mathbf{P}}_{k-1}^T\bar{\mathbf{p}}_{k}=0$, it follows that $$\bar{\mathbf{g}}_{k-1}=-\bar{\gamma}_{k-1}\bar{\mathbf{\Theta}}_{k-1} ^{-1}\bar{\mathbf{P}}_{k-1}^T\bar{\mathbf{y}}_k,$$ where $\bar{\mathbf{\Theta}}_{k-1}=\bar{\mathbf{P}}_{k-1}^T\bar{\mathbf{P}}_{k-1}$. Following the fact $\bar{\mathbf{s}}_k^T\mathbf{A}\bar{\mathbf{p}}_k=\bar{\mathbf{s}}_k^T\mathbf{r}_{k-1}$, we have $$\bar{\gamma}_{k-1}=\frac{\bar{\mathbf{s}}_k^T\mathbf{A}^T\mathbf{z}_{k-1}}{\|\mathbf{A}\bar{\mathbf{s}}_k\|_2^2-\|\bar{\mathbf{d}}_{k-1}\|_2^2},$$ where $\bar{\mathbf{d}}_{k-1}=\bar{\mathbf{\Theta}}_{k-1} ^{-1/2}\bar{\mathbf{P}}_{k-1}^T\mathbf{A}\bar{\mathbf{s}}_k$.

\end{document}